\theoremstyle{plain}
\newtheorem{theorem}[equation]{Theorem}
\newtheorem{lemma}[equation]{Lemma}
\newtheorem{corollary}[equation]{Corollary}
\newtheorem{proposition}[equation]{Proposition}
\theoremstyle{definition}
\newtheorem{definition}[equation]{Definition}
\theoremstyle{remark}
\newtheorem{remark}[equation]{Remark}
\numberwithin{equation}{section}
\newcommand{\RR}{{\mathbb{R}}}
\newcommand{\CC}{{\mathbb{C}}}
\newcommand{\eps}{\varepsilon}
\newcommand{\N}{\mathbb{N}}
\newcommand{\Z}{\mathbb{Z}}
\newcommand{\om}{\Omega}
\newcommand{\G}{\mathcal{G}}
\newcommand{\RNum}[1]{\uppercase\expandafter{\romannumeral #1\relax}}
\newcommand{\norm}[1]{\left \lVert #1 \right \rVert }
\newcommand{\vw}{$A_{\infty}^{vw}$\ }
\newcommand{\vwp}{$\mathcal{A}_{\infty}^{vw}$\ }
\newcommand{\rhq}{$\mathcal{RH}^{vw}$\ }
\newcommand{\cl}[1]{\overline{#1}}
\author{Pascal Auscher}
\author{Simon Bortz}
\author{Moritz Egert}
\author{Olli Saari}
\address{Laboratoire de Math\'{e}matiques d'Orsay, Univ. Paris-Sud, CNRS, Universit\'{e} Paris-Saclay, 91405 Orsay, France \vspace{5pt}\newline\vspace{5pt}{\rm and}\newline Laboratoire Ami\'{e}nois de Math\'{e}matique Fondamentale et Appliqu\'{e}e, UMR 7352 du CNRS, Universit\'{e} de Picardie-Jules Verne, 80039 Amiens, France}
\email{pascal.auscher@math.u-psud.fr}
\address{Laboratoire de Math\'{e}matiques d'Orsay, Univ. Paris-Sud, CNRS, Universit\'{e} Paris-Saclay, 91405 Orsay, France}
\email{moritz.egert@math.u-psud.fr}
\address{School of Mathematics, University of Minnesota, Minneapolis, MN 55455, USA}
\email{bortz010@umn.edu} 
\address{Department of Mathematics and Systems Analysis, Aalto University, FI-00076 Aalto, Finland \vspace{5pt}\newline\vspace{5pt}{\rm and}\newline Mathematical Institute, University of Bonn, 53115 Bonn, Germany
}
\email{saari@math.uni-bonn.de}
\thanks{The first and third authors were partially supported by the ANR project ``Harmonic Analysis at its Boundaries'', ANR-12-BS01-0013. This material is based upon work supported by National Science Foundation under Grant No.\ DMS-1440140 while the  authors were in residence at the MSRI in Berkeley, California, during the Spring 2017 semester. The second author was supported by the NSF INSPIRE Award DMS
1344235. The third author was supported by a public grant as part of the FMJH}
\keywords{Gehring's lemma, (non-local) Reverse H\"older inequalities, spaces of homogeneous type, (very weak) $A_\infty$ weights, $C_p$ weights, fractional elliptic equations, self-improvement properties.}
\subjclass[2010]{Primary: 30L99; Secondary: 34A08, 42B25} 
\begin{document}
\allowdisplaybreaks

\title[Non-local Gehring lemmas]{Non-local Gehring lemmas in spaces of homogeneous type and applications}

\begin{abstract}
We prove a self-improving property for reverse H\"older inequalities with non-local right-hand side.  We attempt to cover all the most important situations that one encounters when studying elliptic and parabolic partial differential equations. We present applications to non-local extensions of $A_{\infty}$ weights and fractional elliptic divergence form equations. We write our results in spaces of homogeneous type.
\end{abstract}

\maketitle
\setcounter{tocdepth}{1}
\tableofcontents
\setcounter{tocdepth}{2}
\date{\today}

\section{Introduction}

Gehring's lemma \cite{Gehring1973} establishes the open-ended property of reverse H\"older classes. If 
\begin{equation}
\label{intro1}
\left( \frac{1}{|B|} \int_{B} u^{q}\, dx \right)^{1/q} \lesssim \frac{1}{|B|} \int_{B} u \, dx 
\end{equation}
with $q > 1$ and all Euclidean balls $B \subset \mathbb{R}^{n}$, then 
\[ \left( \frac{1}{|B|} \int_{B} u^{q + \epsilon}\, dx \right)^{1/(q + \epsilon)} \lesssim_{q} \frac{1}{|B|} \int_{B} u \,  dx \]
for a certain $\epsilon > 0$ and all Euclidean balls. This self-improving property has proved to be an important tool when studying elliptic \cite{EM1975,Gia} and parabolic \cite{GS1982} partial differential equations as well as quasiconformal mappings \cite{IN1985}. In this case, one has to enlarge the ball in the right  hand side. We come back to this. 

In this work, we are concerned with reverse H\"older inequalities when the right-hand side is non-local. Understanding an analogue of Gehring's lemma in this generality turned out to be crucial in \cite{ABES2017}, where we prove H\"older continuity in time  for  solutions of parabolic systems. The non-local nature   arises from the use of half-order time derivatives.  The ambient space being quasi-metric instead of Euclidean is also an assumption natural from the point of view of parabolic partial differential equations. Hence, we shall explore these non-local Gehring lemmas in spaces of homogeneous type.

It is well known that Gehring's lemma holds for the so called weak reverse H\"older inequality where the right-hand side of \eqref{intro1} is an average over a dilated ball $2B$. We replace the single dilate by a {significantly weaker} non-local tail such as
\[\sum_{k= 0}^{\infty} 2^{-k} \frac{1}{ |2^{k} B|} \int _{2^{k}B}  u \, dx \]
and certain averages over additional functions $f$ and $h$ that have a special meaning in applications. The main result of this paper is Theorem \ref{GeneralGehering.quasi} asserting that a variant of Gehring's lemma, and in particular the local higher integrability of $u$ still holds in this setting. 
We present a core version of the theorem already in the next section. It comes with the introduction of some necessary notation but we tried to keep things simple to give the reader a first flavor of our results. Once the strategy is in place,
we discuss various consequences (Section~\ref{sec:global}), ways to generalize it (Sections~\ref{sec:variants} and \ref{sec:extensions}) as well as self-improving properties for the right-hand side of the reverse H\"older inequality with tail (Section \ref{sec:sirhs}). We aim at covering all the aspects that usually arise from applications. 
Finally, we illustrate our main result by an application to regularity of solutions to the fractional divergence form equation introduced in \cite{ShiehFractional} and investigated further for example in \cite{SchikorraVMO}. 

The context of our work is the following. Gehring's lemma in a metric space endowed with a doubling measure was proved in \cite{ZG2005}. See also the book \cite{BB2011}. By \cite{MS1979}, every quasi-metric space carries a compatible metric structure so that Gehring's lemma also holds in that setting. However, in the case of homogeneous reverse H\"older inequalities, a very clean argument using self-improving properties of $A_\infty$ weights was used in \cite{AHT2017} to give an intrinsically quasi-metric proof (see also the very closely related work \cite{HPR2012}). We do not attempt to review the literature in the Euclidean $n$-space, but we refer to the excellent survey in \cite{Iwaniec1995} instead. In addition, we want to point out the recent paper on Gehring's lemma for fractional Sobolev spaces \cite{KMS2015}. That paper studies fractional equations, whose solutions are self-improving in terms of both integrability and differentiability. Such phenomena are different from what we encounter here, but we found the technical part of \cite{KMS2015} very inspiring.  

Among generalizations, we mention that the tails may be replaced by some supremum of averages taken over balls larger than the original ball on the left-hand side and/or that one may work on open subsets. In this way, our methods can also be applied to obtain a generalization of $A_{\infty}$ weights: In \cite{AHT2017},  a larger class of \emph{weak $A_{\infty}$ weights}, generalizing the one considered  in \cite{Fujii, W1987}  was defined  and their higher (than one) integrability was  proved  (in spaces of homogeneous type). This class of weights, larger than the usual $A_{\infty}$ Muckenhoupt class, is defined  by allowing a uniform  dilation of the ball in the right-hand side  compared to the one on the left-hand side. Here, we show that, in fact, the dilation  may be  arbitrary (depending on the ball) provided it is finite. Another family of weights covered by our methods is the $C_p$ class studied in \cite{Muckenhoupt1981,Sawyer1983}. Precise definitions are given in Section~\ref{sec:Ainfty}.

\subsection*{Acknowledgment}
We thank Tuomas Hyt\"onen for an enlightening discussion on the topics of this work that led to the results extending the $A_{\infty}$ class and Carlos P\'erez for pointing out the connection to the $C_p$ class. We also thank an anonymous referee for suggesting that our results should apply to the fractional divergence form equation of Shieh--Spector \cite{ShiehFractional} rather than the toy model investigated in an earlier version of our manuscript.

\section{Metric spaces}
A \emph{space of homogeneous type} $(X,d,\mu)$ is a triple consisting of a set $X$, a function $d : X \times X \to [0,\infty) $ satisfying the quasi-distance axioms
\begin{enumerate}
\item[(i)] $d(x,y) = 0$ if and only if $x = y$,
\item[(ii)] $d(x,y) = d(y,x)$ for all $x,y \in X$, and 
\item[(iii)] $d(x,z) \leq K ( d(x,y) + d(y,z))$ for a certain $K \geq 1$ and all $x,y,z \in X$;
\end{enumerate} 
and a Borel measure $\mu$ that is doubling in the sense that 
\[0 < \mu ( B(x,2r) ) \leq C_d \, \mu ( B(x,r) ) < \infty  \]
holds for a certain $C_d$ and all radii $r > 0$ and centers $x \in X$. If the constant $K$ appearing in the triangle inequality (iii) equals $1$, we call $(X,d,\mu)$ a \emph{metric space} with doubling measure. The topology is understood to be the one generated by the quasi-metric balls. For simplicity, we impose the additional assumption that all quasi-metric balls are Borel measurable. In general, they can even fail to be open.  

The doubling condition implies there is $C > 0$ so that for some $D > 0$,
\begin{equation}
\label{eq:homdim}
\frac{\mu(B(x,R))}{\mu(B(x,r))} \leq C \left( \frac{R}{r} \right)^{D}
\end{equation}
for all $x \in X$ and $R \geq r > 0$. We can always take $D = \log_2 C_d$. In the following we call this number the \emph{homogeneous dimension} (although there might be smaller positive numbers $D$ than $\log_2 C_d$ for which  this inequality holds: our proofs work with any such $D$). For all these basic facts on analysis in metric spaces, we refer to the book \cite{BB2011}. 

The following theorem is concerned with the special case of metric spaces, but it has an analogue in the general case of quasi-metric spaces, see Theorem \ref{GeneralGehering.quasi} below. 

\begin{theorem}\label{GeneralGehering.cor}
Let $(X,d,\mu)$ be a metric space with doubling measure. Let {$s, \beta >0$ and $q>1$ be such that $s<q$} 
and $\beta \geq D(1/s-1/q)$ where $D$ is any number satisfying \eqref{eq:homdim}. Let $N >1$ and let $(\alpha_k)_{k \geq 0}$ be a non-increasing sequence of positive numbers with $\alpha := \sum_k \alpha_k  < \infty$, and define 
\begin{equation}
\label{eq:aub}
a_u (B) := \sum_{k=0}^{\infty} \alpha_k \fint_{N^{k} B} u \, d \mu
\end{equation}  
for $u \geq 0$ locally integrable and $B$ a metric ball. 

Suppose that $u, f, h \ge 0$ with $u^q, f^q, h^s \in L^1_{loc}(X, d\mu)$ and  $A\ge 0$ is a constant  such that for every ball $B = B(x, R)$, 
\begin{equation}\label{GGeq1cor.eq}
\left(\fint_B u^q \, d\mu\right)^{1/q} \le Aa_{u}(B) + (a_{f^q}(B))^{1/q} + R^\beta(a_{h^s}(B))^{1/s}.
\end{equation} 
Then there exists $p > q$ depending on $\alpha_0, \alpha, A, q, s,N$ and $C_d$ such that for all balls $B$, 
\begin{equation}\label{GGeq2cor.eq}
\begin{split}
\left(\fint_B u^p \, d\mu\right)^{1/p} &\lesssim a_{u}(NB) + (a_{f^q}(NB))^{1/q} + R^\beta(a_{h^s}(NB))^{1/s} 
\\& \qquad+ \left(\fint_{N B}f^p \, d\mu\right)^{1/p} + R^\beta\left(\fint_{N B}h^{ps/q} \, d\mu\right)^{q/sp},
\end{split}
\end{equation}
with implicit constant depending on $\alpha_0,\alpha,  A, q, s, \beta,N$ and  $C_d$.

\begin{remark} If one assumes the  sequence $(\alpha_{k})_{k\ge 0}$ is finite, {the functional is comparable to one single average on $N^{k_{0}}B$ for some $k_{0}$. This gives a proof of the classical Gehring lemma with dilated balls. Note the shift from $N^{k_{0}}B$ to $N^{k_{0}+1}B$ in the conclusion.  But well-known additional covering arguments show that the dilation factor $N^{k_{0}+1}$ can be changed to any number larger than $1$. }
If one assumes 
\begin{equation}
\label{eq:C}
\exists  C<\infty : \forall k\ge 0 \quad \alpha_{k} \le C \alpha_{k+1},
\end{equation} then it follows that $a_{u}(NB) \le C a_{u}(B)$ for all $u\ge 0 $ and all balls $B$. In that case, one can replace $NB$ by $B$ in the right-hand side of \eqref{GGeq2cor.eq}. Geometric sequences, which are typical in application,  do satisfy this condition but this rules out finite sequences.  Finally, note that the higher integrability of $u$ on $B$ depends only on the higher integrability of $f$ and $h$ on the first dilated ball $NB$.  
\end{remark}

\end{theorem}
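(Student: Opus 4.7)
The goal is a higher-integrability estimate on a fixed ball $B=B(x_0,R)$, so I would denote by $\Lambda$ the sum of the five non-$u$ terms on the right-hand side of \eqref{GGeq2cor.eq}, and aim for
\[
\int_B u^p\,d\mu = p\int_0^\infty \lambda^{p-1} \mu(\{x\in B: u(x)>\lambda\})\,d\lambda \;\lesssim\; \Lambda^p\,\mu(B).
\]
Splitting the layer-cake integral at a threshold $\lambda_0$ comparable to $\Lambda$, the part with $\lambda\le\lambda_0$ contributes $\lambda_0^p\mu(B)$, which is of the desired form. Everything thus reduces to a good-$\lambda$ estimate of the form $\int_{\{u>K\lambda\}\cap B} u^q \le \varepsilon \int_{\{u>c\lambda\}} u^q +\text{(error)}$ uniformly for $\lambda>\lambda_0$, with $\varepsilon K^{p-q}$ small enough to be absorbed after integration.

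To produce this good-$\lambda$ estimate I would perform a Calderón--Zygmund stopping argument at level $\lambda^q$ for $u^q$ inside a slightly dilated universe containing $B$, producing pairwise disjoint stopping balls $B_j$ with $\fint_{B_j}u^q\sim\lambda^q$. Applied to each $B_j$, the hypothesis \eqref{GGeq1cor.eq} gives $\lambda\lesssim A\,a_u(B_j)+(a_{f^q}(B_j))^{1/q}+R_j^\beta (a_{h^s}(B_j))^{1/s}$, so the heart of the proof is to show that each tail is controlled by a localized maximal function plus a multiple of $\Lambda$. For this I would pick $k_\star=k_\star(B_j)$ with $N^{k_\star}r_j\sim R$ and split
\[
a_u(B_j)=\sum_{k\le k_\star}\alpha_k \fint_{N^kB_j}u\;+\;\sum_{k>k_\star}\alpha_k\fint_{N^kB_j}u.
\]
The first sum is bounded by $\alpha\,\inf_{B_j} Mu$, where $M$ is a (localized) Hardy--Littlewood maximal operator, because every ball $N^kB_j$ with $k\le k_\star$ contains a point of $B_j$. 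The second sum reindexes through the monotonicity of $(\alpha_k)$ and the doubling estimate $\mu(N^kB_j)\gtrsim N^{(k-k_\star)D}\mu(N^{k-k_\star}\cdot \text{ball of radius }R)^{-1}\cdot(\ldots)$ to yield a bound by $C\,a_u(NB)\le C\Lambda$, which is absorbed into the threshold $\lambda_0$. The analogous split is performed for the $f$ and $h$ tails, using $(a_{f^q}(NB))^{1/q}$ and $R^\beta(a_{h^s}(NB))^{1/s}$ as the absorbable pieces and $Mf^q$, $Mh^s$ as the local maximal pieces; the condition $\beta\ge D(1/s-1/q)$ is what guarantees compatibility between the $s$-scaling of the $h$-term and the $q$-scaling on the left after passage to $L^p$.

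Once the split is done, on each $B_j$ at least one of $\inf_{B_j}Mu$, $(\inf_{B_j}Mf^q)^{1/q}$, or $R_j^\beta(\inf_{B_j}Mh^s)^{1/s}$ must dominate $c\lambda$, so that $\bigcup_j B_j$ is covered by the union of the corresponding localized super-level sets. Raising to the $p$-th power in $\lambda$, integrating over $\lambda>\lambda_0$, and invoking the $L^{p/q}\to L^{p/q}$ and $L^{p/s}\to L^{p/s}$ boundedness of $M$ (valid for $p>q>1$ and $p/s>1$ respectively, using $s<q$), one obtains
\[
\int_B u^p\,d\mu \;\lesssim\; \varepsilon\int u^p + \Lambda^p\mu(B)+\int_{NB} f^p + R^{p\beta}\int_{NB} h^{ps/q}
\]
and a self-improvement argument (choosing $\lambda_0$, and hence the gap between $p$ and $q$, small enough that $\varepsilon$ absorbs) delivers the stated inequality. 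A mild truncation of $u$ at height $M$ together with the monotone-convergence theorem legitimizes the absorption, since a priori $\int_B u^p$ could be infinite. The principal obstacle is the tail splitting: one must choose $k_\star$ depending on $B_j$ in a way that still sums well after the reindexing in the \emph{far} part, and this is exactly where the hypothesis that $(\alpha_k)$ is non-increasing and summable (rather than, say, merely summable) becomes essential.
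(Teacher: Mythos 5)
Your outline captures the broad skeleton of the paper's argument (layer-cake decomposition, stopping-time/Vitali covering, application of the hypothesis on each stopping ball, maximal function estimate, absorption), but it glosses over the two ideas that actually make the proof close, and without them the sketch does not go through.

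\textbf{The absorption is on a strictly larger ball, and this needs the concentric-ball iteration.} Your stopping balls $B_j$ have centers in $B$ but stick out of it, so after the maximal function step the absorption term is $\varepsilon\int_{B'} u^p$ for a strictly larger ball $B'$ (say $B'=NB$), not $\varepsilon\int_B u^p$. You cannot absorb this directly, no matter how small $\varepsilon$ is, since the left side is $\int_B u^p$. The paper fixes this by carrying out the whole argument on a family of concentric balls $B_{r_0}\subset B_{\rho_0}$ with $R\le r_0<\rho_0\le NR$ and $N^\ell(\rho_0-r_0)=R$, proving $\varphi(r_0)\le M_1\widetilde{C}_N^{(p-q)\ell}+\varepsilon_p\varphi(\rho_0)+\varepsilon_p^{-1}M_2$, and then closing by a geometric iteration over a sequence $t_\ell\uparrow NR$ (Step~7). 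This iteration is not a cosmetic detail: it is the mechanism by which the ``loss of localization'' incurred by the stopping balls is repaid, and it is exactly where $p-q$ must be made small. Your plan never introduces these nested radii, so the self-improvement step as written does not close.

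\textbf{The $h$-term and the exponent $ps/q$.} You propose to treat the $h$-tail via $M(h^s)$ and the $L^{p/s}\to L^{p/s}$ boundedness of $M$, but (i) applying $M$ directly to $h^s$ loses the radius factor $r_j^\beta$, which differs from ball to ball, and (ii) even if you could ignore this, $\int (M(h^s))^{p/s}\lesssim \int h^p$, which would require $h\in L^p_{loc}$ and would give a strictly weaker theorem than the paper's $(\fint_{NB}h^{ps/q})^{q/sp}$ (since $ps/q<p$). The paper handles this by a preparatory trick (Step~1): for the fixed ball $B$ it defines $g^q:=A_R^q\,h^s\,\mathbbm{1}_{NB}$ with $A_R$ chosen so that $r^\beta(\fint_{B_r}h^s)^{1/s}\le(\fint_{B_r}g^q)^{1/q}$ for every sub-ball $B_r\subset NB$. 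This is exactly where $\beta\ge D(1/s-1/q)$ is used, and it converts the $h$-term into a term of the same $L^q$-average form as the $f$-term, so the rest of the argument becomes symmetric. Your remark that ``$\beta\ge D(1/s-1/q)$ is what guarantees compatibility\ldots'' is the right intuition but is not implemented: without the $g$-substitution, the stopping/maximal argument for the $h$-tail would instead call for a fractional maximal operator $M^{\beta s}$, which is not $L^{p/s}\to L^{p/s}$ bounded and, when it does work (under a volume lower bound), requires the complementary condition $\beta\le Q(1/s-1/q)$ (cf.\ Theorem~\ref{homers.4.3.thrm}).

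In short, the overall strategy is in the same spirit as the paper's, but the two missing ingredients — the nested-radii iteration and the $g$-function reduction of the $h$-term — are precisely the places where the non-local structure and the quantitative hypotheses ($\beta\ge D(1/s-1/q)$, monotone summable $\alpha_k$) do their work, and without them the proof has a genuine gap rather than merely omitted routine detail.
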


\begin{proof} 
We prove \eqref{GGeq2cor.eq} for $B = B(x_0,R)$ with $x_{0} \in X$ and $R> 0$. {Throughout, we reserve the symbol $C$ for a constant that depends at most on $\alpha_0,\alpha,  A, q, s, \beta,N$ and  $C_d$ but that may vary from line to line.}

\subsection*{Step 1. Preparation}  

Having fixed $B$,  we set 
$g^q := A_{R}^{q} h^s \mathbbm{1}_{NB}$ with $A_{R}$ a constant so that for any ball $B_{r}$ with radius $r$ contained in $NB$,
we have 
\begin{equation}
\label{eq:trick}
r^\beta \left(\fint_{B_{r}}h^s \, d\mu\right)^{1/s}  \le    \left(\fint_{B_{r}}g^q \, d\mu\right)^{1/q}
\end{equation} 
and
\begin{equation}
\label{eq:trick2}
    \left(\fint_{ N B }g^q \, d\mu\right)^{1/q} \le C_{1} (N R)^\beta\left(\fint_{N B}h^s \, d\mu\right)^{1/s}  
\end{equation} 
for some $C_{1}$ depending only on the doubling condition, $s$ and $q$. Indeed, write $B_{r}=B(x,r)$. As $x\in NB$, we have $NB=B(x_{0},N R ) \subset B(x, 2 N R)$, hence 
$$
\frac{\mu(NB)}{\mu(B_{r})} \le \frac{\mu(B(x,2N R))}{\mu(B(x,r))} \le C_{0} \left(\frac{2NR}{r}\right)^D \le C_{0} 2^D \left(\frac{NR}{r}\right)^{\beta(1/s-1/q)^{-1}}
$$
where $C_{0}$ depends only on the doubling condition.  Unraveling this inequality and setting $C_{1}  =(C_{0}2^D)^{1/q-1/s}$ yield
$$
r^\beta \mu(B_{r})^{1/q-1/s} \le C_{1} (N R)^\beta \mu(N B)^{1/q-1/s}.
$$
Hence, as $q>s$, 
$$
r^\beta \mu(B_{r})^{1/q-1/s} \left(\int_{B_{r}}h^s \, d\mu\right)^{1/s-1/q} \le C_{1}(NR)^\beta \mu(NB)^{1/q-1/s} \left(\int_{NB}h^s \, d\mu\right)^{1/s-1/q} 
$$
so that
\begin{align*}
   r^\beta\left(\fint_{B_{r}}h^s \, d\mu\right)^{1/s} &= r^\beta\left(\fint_{B_{r}}h^s \, d\mu\right)^{1/s-1/q}\left(\fint_{B_{r}}h^s \, d\mu\right)^{1/q} \\
   &\le C_{1}(NR)^\beta \left(\fint_{NB}h^s \, d\mu\right)^{1/s-1/q}   \left(\fint_{B_{r}}h^s \, d\mu\right)^{1/q}.  
\end{align*}
Thus, we set 
\begin{align}
\label{eq:AR}
A_{R}:=C_{1}(NR)^\beta \left(\fint_{NB}h^s \, d\mu\right)^{1/s-1/q}
\end{align}
and \eqref{eq:trick} is proved.
Observing that if $B_{r}=NB$ we have equalities with constant $1$ in the inequalities above, the constant $C_{1}$ works for \eqref{eq:trick2}. 
 
\subsection*{Step 2. Local setup}

For $\ell \in \N$, fix $r_0$ and $\rho_0$ real numbers satisfying $R \le r_0 <  \rho_0 \le NR$ with $N^\ell(\rho_0 - r_0) = R$. For $x \in B(x_0, r_0)$, we have that $N^kN^\ell (\rho_0 - r_0) = N^kR$ for $k \ge 0$ so
$$B(x, N^{k}(\rho_0 - r_0)) \subset B(x_0, N^{k+1} R) \subset B(x, N^{k+ \ell + j}(\rho_0 - r_0)),$$
where  $j=2$ when $N\ge 2$  and $j= \lceil (\log_{2}N)^{-1}+1\rceil$ when $1<N<2$, and consequently for any positive $\mu$-measurable function $v$,
\begin{equation}
\label{doubavgineqcor.eq}
\begin{split}
\fint_{B(x,N^k(\rho_0 - r_0))} v \, d\mu & = \frac{1}{\mu(B(x,N^k(\rho_0 - r_0)))} \int_{B(x,N^k(\rho_0 - r_0))} v \, d\mu
\\& \le  \frac{\mu(B(x, N^{k+ \ell + j}(\rho_0 - r_0)))}{\mu(B(x,N^k(\rho_0 - r_0)))} \frac{1}{B(x_0,N^{k+1}R)} \int_{B(x_0,N^{k+1}R)} v \, d\mu
\\& \le C_N^{\ell +j}  \fint_{B(x_0,N^{k+1}R)} v \, d\mu,
\end{split}
\end{equation}
where we used \eqref{eq:homdim} in the last line. The constant $C_N \geq 1$ only depends on the doubling constant $C_d$ and the number $N$. 

\subsection*{Step 3. Beginning of the estimate}  

For $m >0$, set $u_m: = \min\{u,m\}$, $B_{r_0} := B(x_0,r_0)$ and $B_{\rho_0} := B(x_0,\rho_0)$. Using the Lebesgue-Stieltjes formulation of the integral we have
\begin{equation}
\label{eqstartlayercake.eq}
\begin{split}
\int_{B_{r_0}} u_m^{p-q}u^q \, d\mu & = (p-q)\int_0^m \lambda^{p -q - 1} u^q(B_{r_0} \cap \{u > \lambda\})\, d\lambda
\\&= (p-q)\int_0^{\lambda_0} \lambda^{p -q - 1} u^q(B_{r_0} \cap \{u > \lambda\})\, d\lambda
\\ &\qquad + (p-q)\int_{\lambda_0}^m \lambda^{p -q - 1} u^q(B_{r_0} \cap \{u > \lambda\})\, d\lambda
\\& \le \lambda_0^{p-q} u^q(B_{r_0})  + (p-q)\int_{\lambda_0}^m \lambda^{p -q - 1} u^q(B_{r_0} \cap \{u > \lambda\})\, d\lambda
\\& =: I + II,
\end{split}
\end{equation}
where $u^q(\mathcal{A}) = \int_{\mathcal{A}} u^q \, d \mu$ for any measurable set $\mathcal{A} \subseteq X$ and $\lambda_{0}$ is a constant chosen below. 

\subsection*{Step 4. Choice of the threshold $\lambda_{0}$}   
We define three functions 
\begin{equation*}
U(x,r): = \fint_{B(x,r)} u \, d \mu, \ \ \  F(x,r): = \left(\fint_{B(x,r)} f^q \, d \mu\right)^{1/q},  
\ \ \ G(x,r): = \left(\fint_{B(x,r)} g^q \, d \mu\right)^{1/q}
\end{equation*}
and for $\lambda > \lambda_0$, we denote the relevant level sets by
\begin{align*}
U_\lambda := B_{r_0} \cap \{u > \lambda\}, \qquad
F_\lambda := B_{r_0} \cap \{f > \lambda\}, \qquad
G_\lambda := B_{r_0} \cap \{g > \lambda\}.
\end{align*}
 It follows from \eqref{doubavgineqcor.eq} with $k = 0$ that for $x \in B_{r_0}$,
$$
U(x,\rho_0 - r_0) = \fint_{B(x,\rho_0 - r_0)} u \, d \mu \le \frac{C_N^{\ell + j}}{\alpha_0}a_u(NB)$$
and one has the same observation for $F$ with $f^q$. For the last term, we use \eqref{doubavgineqcor.eq} in conjunction with \eqref{eq:trick2} to obtain
\begin{align*}
 \fint_{B(x,\rho_0 - r_0)}  g^q\, d \mu &\le {C_{N}^{\ell + j}}\fint_{NB}  g^q\, d \mu     \le {C_{N}^{\ell + j}} C_{1}^q (NR)^{\beta q} \left(\fint_{NB}h^s \, d\mu\right)^{q/s} \\
 & \le  \frac{C_N^{(\ell + j)q/s}}{\alpha_0^{q/s}} C_{1}^q  (NR)^{\beta q} a_{h^s}(NB)^{q/s}, 
\end{align*}
where we used $C_N \ge 1$ and $q/s>1$.  Consequently, we choose 
\begin{equation*}
\lambda_0:= \frac{C_N^{\ell + j}}{\alpha_0}a_u(N B) + \left(  \frac{C_N^{\ell + j}}{\alpha_0}a_{f^q}(NB)\right)^{1/q}  +  C_{1}(NR)^\beta  \left(  \frac{C_N^{\ell + j}}{\alpha_0} a_{h^s}(NB)\right)^{1/s}.
\end{equation*} 
Finally, set $$\Omega_\lambda : = \Big \{x \in U_\lambda \cup F_\lambda \cup G_\lambda: \text{$x$ is a Lebesgue point for $u, f^q$ and $g^q$} \Big \}.$$

\subsection*{Step 5. Estimate of the measure of $U_{\lambda}$}

We begin to estimate $II$ in \eqref{eqstartlayercake.eq} so we assume $\lambda>\lambda_{0}$.
For  $x \in B_{r_0}$,  
\begin{equation}
\label{startcor.eq}
U(x,\rho_0 - r_0) + F(x,\rho_0 - r_0) + G(x,\rho_0 - r_0) \le \lambda_0 < \lambda.
\end{equation}
On the other hand, by definition of $U_\lambda$, $F_\lambda$ and $G_\lambda$, if $x \in \Omega_\lambda$ then
$$\lim_{r \to 0} U(x,r) + F(x,r) + G(x,r) > \lambda$$
Thus for $x \in \Omega_\lambda$ we can define the stopping time radius
\begin{equation*}
\begin{split}
r_x&:= \sup \Big\{ N^{-m}(\rho_0 - r_0) : m \in \N 
\\ & \quad \quad \text{ and } U(x,N^{-m}(\rho_0 - r_0)) + F(x,N^{-m}(\rho_0 - r_0)) + G(x,N^{-m}(\rho_0 - r_0)) > \lambda \Big\}.
\end{split}
\end{equation*}
We remark that \eqref{startcor.eq} implies that $r_{x}< \rho_0 - r_0$. 
Of course $\Omega_\lambda \subset \cup_{x \in \Omega_\lambda} B(x, r_x/5)$.  
By the Vitali Covering Lemma (5r-Covering Lemma) there exists a countable collection of balls $\{B(x_i,r_i)\} = \{B_i\}$ with $r_i = r_{x_i}$ such that $\{\tfrac{1}{5}B_i\}$ are pairwise disjoint and $\Omega_\lambda \subset \cup_i B_i$.  Let $m_{i}\ge 1$ such that $N^{m_{i}}r_{i}=\rho_0 - r_0$. 

We make three observations:
\begin{enumerate}
\item[(i)] For each $i$, either $\fint_{B_i} u \, d \mu > \frac \lambda 3$, $(\fint_{B_i} f^q \, d \mu)^{1/q} > \frac \lambda 3$, or $(\fint_{B_i} g^q \, d \mu)^{1/q} > \frac  \lambda 3$.
\item[(ii)] The radius of each $B_i$ is less than $\rho_0 - r_0$ and $x_i \in B_{r_0}$ so $B_i \subset B_{\rho_0}$.
\item[(iii)] For $0  \le k <  m_{i}$,  $N^k r_i= N^{-(m_{i}-k)}( \rho_0 - r_0) < \rho_0 - r_0$, so $N^{k} r_i$ is `above' or at the stopping time and 
$$\fint_{N^k B_i} u \, d\mu + \left(\fint_{N^k B_i} f^q \, d\mu \right)^{1/q} +  \left(\fint_{N^k B_i} g^q \, d\mu \right)^{1/q} \le {C} \lambda,$$
where $C$ { shows up since we have used doubling once in the case $k=0$}.
\end{enumerate}

Using that $\mu((U_\lambda \cup F_\lambda \cup G_\lambda)\setminus \Omega_\lambda) = 0$, $\Omega_\lambda \subset \cup_i B_i$ and \eqref{GGeq1cor.eq}, we obtain
\begin{equation}\label{uqofelamcor.eq}
\begin{split}
u^q(U_\lambda) &\le u^{q}(U_\lambda \cup F_\lambda \cup G_\lambda) \le \sum_i  u^q(B_{i})
\\& \le \sum_i \mu(B_i)[Aa_u(B_i) + (a_{f^q}(B_i))^{1/q} + r_{i}^\beta (a_{h^s}(B_i))^{1/s}]^q.
\end{split}
\end{equation} 
Then using  $m_i \ge 1$, $\sum_k \alpha_k = \alpha$ and observation (iii) we obtain 
\begin{equation}
\label{eq:splittingthesum}
\begin{split}
a_{f^q}(B_i) &= \sum_{k=0}^\infty \alpha_k \fint_{N^k B_i} f^q \, d\mu = \sum_{k=0}^{m_i - 1} \alpha_k \fint_{N^k B_i} f^q \, d\mu + \sum_{k=m_i}^\infty \alpha_k \fint_{N^k B_i} f^q \, d\mu 
\\ &\le { C^{q}} \alpha \lambda^{q} + \sum_{k=0}^\infty \alpha_{k + m_i} \fint_{B(x_i, N^k(\rho_0 - r_0))} f^q \, d\mu,
\end{split}
\end{equation}
where we simply re-indexed the second sum and used that $N^{m_i}r_i = \rho_0 - r_0$.  Now we use \eqref{doubavgineqcor.eq} and that $\alpha_{k + m_i} \le \alpha_{k}$  to deduce
\begin{equation*}\label{auboundonBicor_f.eq}
\begin{split}
a_{f^q}(B_i) &\le { C^{q}} \alpha \lambda^{q} + C_{N}^{\ell +j} \sum_{k=0}^\infty \alpha_{k} \fint_{N^{k+1}B} f^q \, d\mu
\\& \le { C^{q}}  \alpha\lambda ^{q} + C_{N}^{\ell + j} a_{f^q}(NB) \le C^q \alpha\lambda^{q} + \alpha_0 \lambda_0^{q} 
\\& \leq  C \lambda^{q},
\end{split}
\end{equation*}
where we used the definition of $\lambda_0$ in Step~4 and $\lambda > \lambda_0$. Similarly,  $a_{u}(B_i) \leq C \lambda$.
For $h^s$,  using  $m_i \ge 1$,  we obtain 
\begin{equation*}
\begin{split}
(r_{i})^{\beta s}& a_{h^s}(B_i) \\& = \sum_{k=0}^\infty \alpha_k (r_{i})^{\beta s} \fint_{N^k B_i} h^s \, d\mu 
\\
&= \sum_{k=0}^{m_i - 1} \alpha_k (r_{i})^{\beta s} \fint_{N^k B_i} h^s \, d\mu + \sum_{k=m_i}^\infty \alpha_k (r_{i})^{\beta s} \fint_{N^k B_i} h^s \, d\mu 
\\ &\le \sum_{k=0}^{m_i - 1} \alpha_k \left( \fint_{N^k B_i} g^q \, d\mu\right)^{s/q}   + (r_{i})^{\beta s} \sum_{k=0}^\infty \alpha_{k + m_i}   \fint_{B(x_i, N^k(\rho_0 - r_0))} h^s \, d\mu,
\end{split}
\end{equation*}
where we used \eqref{eq:trick} and $N^kB_{i}\subset NB$ when $k<m_{i}$ for the first sum, re-indexed the second sum and used that $N^{m_i}r_i = \rho_0 - r_0$.  With  $\sum_k \alpha_k = \alpha$ and observation (iii) for the first sum and $\alpha_{k + m_i} \le \alpha_{k}$ along with \eqref{doubavgineqcor.eq} for the second one, we deduce
\begin{equation*}\label{auboundonBicor_h.eq}
\begin{split}
(r_{i})^{\beta s} a_{h^{s}}(B_i) &\le { C^{s}} \alpha \lambda^{s} + C_{N}^{\ell +j} (NR)^{\beta  s} \sum_{k=0}^\infty {\alpha_{k}}  \fint_{N^{k+1}B} h^s \, d\mu
\\& \le { C^{s}}  \alpha\lambda ^{s} + C_{N}^{\ell + j}   (NR)^{\beta s} a_{h^{s}}(NB) \le C^s \alpha\lambda^{s} + \alpha_0 \lambda_0^{s} C_{1}^{-s}
\\& \leq C \lambda^{s},
\end{split}
\end{equation*}
where we used $\lambda > \lambda_0$. Combining the above estimates  with \eqref{uqofelamcor.eq} we obtain 
\begin{equation}
\label{uqbnd1cor.eq}
u^q(U_\lambda) \le C \lambda^q \sum_i \mu(B_i) 
\le CC_{d}^3\lambda^q \sum_i \mu\left(\tfrac{1}{5}B_i\right) 
\le CC_{d}^3 \lambda^q  \mu\left(\cup_i  B_i\right)
\end{equation}
where we used that $\{\tfrac{1}{5}B_i\}$ are pairwise disjoint.  Now we use (i) and (ii) to conclude that
\begin{equation}
\label{uqbnd2cor.eq}
\cup_i  B_i \subset  \{ M(u\mathbbm{1}_{B_{\rho_0}}) > \lambda/3\}\cup  \{  M(f^q\mathbbm{1}_{B_{\rho_0}}) > (\lambda/3)^q\}
 \cup \{ M(g^q\mathbbm{1}_{B_{\rho_0}}) > (\lambda/3)^q\},
\end{equation}
where $M$ is the uncentered maximal function. 

\subsection*{Step 6. Estimate of $II$ and $I$}

Plugging \eqref{uqbnd1cor.eq} and  \eqref{uqbnd2cor.eq} into $II$ we obtain
\begin{equation*}
\begin{split}
II &= (p-q)\int_{\lambda_0}^m \lambda^{p -q - 1} u^q(U_\lambda)\, d\lambda
\\& \le C(p-q)\int_0^m \lambda^{p-1} \mu(\{ M(u\mathbbm{1}_{B_{\rho_0}}) > \lambda/3\}) \, d\lambda
\\& \quad  + C(p-q)\int_0^m \lambda^{p-1} \mu(\{M(f^q\mathbbm{1}_{B_{\rho_0}}) > (\lambda/3)^q\})\, d\lambda
\\& \quad   +C(p-q)\int_0^m \lambda^{p-1} \mu(\{ M(g^q\mathbbm{1}_{B_{\rho_0}}) > (\lambda/3)^q\}) \, d\lambda
\\& =: II_1 + II_2 + II_3.
\end{split}
\end{equation*}

We handle $II_2$ and $II_3$ in the same way. Using the Hardy-Littlewood maximal theorem for spaces of homogeneous type to the effect that for $p \in (q,2q)$ the $L^{p/q} \to L^{p/q}$ operator norm of the maximal function is bounded by $C\tfrac{p/q}{(p/q) - 1}$, see Theorem~3.13 in \cite{BB2011}, we obtain 
\begin{equation*}
\begin{split}
II_3 &= C(p-q)\int_0^m \lambda^{p-1} \mu (\{M(g^q\mathbbm{1}_{B_{\rho_0}}) > (\lambda/3)^q)\} ) \, d\lambda
\\& \le C\frac{p-q}{p} \int_X (M(g^q\mathbbm{1}_{B_{\rho_0}}))^{p/q} \, d\mu
\\& \le C {\Big(\frac{p}{p-q}\Big)^{p/q -1}}  \int_{NB} g^p \, d\mu,
\end{split}
\end{equation*}
where we used $B_{\rho_{0}} \subset NB$ in the last step.
Similarly we have that $$II_2 \le C \big(\frac{p}{p-q}\big)^{p/q -1} \int_{NB} f^p\, d\mu.$$ To handle $II_1$ we notice that 
$$\{M(u\mathbbm{1}_{B_{\rho_0}}) > \lambda/3\} \subset \{M(u\mathbbm{1}_{B_{\rho_0}\cap \{u > \lambda/6\}}) > \lambda/6\}.$$
From this estimate and the weak type (1,1) bound for the Hardy-Littlewood maximal function for spaces of homogeneous type, see Lemma~3.12 in \cite{BB2011}, we have
$$\mu(\{M(u\mathbbm{1}_{B_{\rho_0}}) > \lambda/3\}) \le \frac{C}{\lambda} \int_{B_{\rho_0}\cap \{u > \lambda/6\}} u \, d\mu.$$
Using this bound in $II_1$ yields 
\begin{equation}
\begin{split}
II_1 &\le C(p-q)\int_0^m \lambda^{p-2}\int_{B_{\rho_0}\cap \{u > \lambda/6\}} u \, d\mu \, d \lambda
\\& = C(p-q) \int_{B_{\rho_0}}u \int_0^{\max\{m, 6u\}} \lambda^{p-2} \, d\lambda \, d\mu
\\& = C 6^{p-1} \frac{p-q}{p-1} \int_{B_{\rho_0}}u_{m/6}^{p-1} u \, d \mu
\\& \le C6^{p-1} \frac{p-q}{p-1} \int_{B_{\rho_0}}u_{m}^{p-q} u^{q} \, d \mu,
\end{split}
\end{equation}
and we note that we can make the constant in front of the integral arbitrarily small by choice of $p > q$. Combining our estimates for $II_1, II_2$ and $II_3$ we obtain for any {$p \in (q, 2q)$,
\begin{equation}\label{boundfortwocor.eq}
II \le \epsilon_p \int_{B_{\rho_0}}u_{m}^{p-q} u^q \, d \mu + C \epsilon_p^{-1} \int_{NB} f^p\, d\mu +C \epsilon_p^{-1} \int_{NB} g^p \, d\mu,
\end{equation}
where $\epsilon_p := C(p-q)$. }

Now we bound $I$. Note that $B\subset  B_{r_0} \subset NB$. By definition of $\lambda_0$ and using \eqref{GGeq1cor.eq},
\begin{equation}\label{boundforonecor.eq}
\begin{split}
I & \leq \lambda_0^{p-q} u^q(NB) 
\\& \le \lambda_0^{p-q} \mu(NB)\big( Aa_u(NB) + (a_{f^q}(NB))^{1/q} + (NR)^{\beta }(a_{h^s}(NB))^{1/s}\big)^q
\\& \le C (\widetilde{C}_N^{\ell+j})^{p-q} \mu(NB) \alpha_p(NB),
\end{split}
\end{equation}
where we denoted $\alpha_p(NB) := \big(a_u(NB) + (a_{f^q}(NB))^{1/q} + (NR)^{\beta }(a_{h^s}(NB))^{1/s}\big)^p$ {and put $\widetilde{C}_N := \max(C_N, C_N^{1/s}) \geq 1$ on recalling that we allow for $s < 1$}.

\subsection*{Step 7. Conclusion}

Setting 
\[\varphi(t) := \int_{B(x_0,t)} u_m^{p-q}u^q \, d\mu\] 
and combining estimates \eqref{eqstartlayercake.eq}, \eqref{boundfortwocor.eq} and \eqref{boundforonecor.eq}, we may summarize our estimates as 
\begin{equation*}
\varphi(r_0) \le C \mu(NB) \alpha_p(NB) \widetilde{C}_N^{(p-q)(\ell + j)} + \epsilon_p \varphi(\rho_0) + C \epsilon_p^{-1} \int_{NB} f^p\, d\mu + C \epsilon_p^{-1} \int_{NB} g^p \, d\mu,
\end{equation*}
whenever $R\le r_0 < \rho_0 \le NR$ and $N^{\ell}(\rho_0 - r_0) = R$ and where $j$ depends at most on $N$, see Step~2. For notational convenience  set 
\begin{align*}
M_1 &:=  C \mu(NB) \alpha_p(NB),\\
M_2 &:=  C \int_{NB} f^p\, d\mu + C \int_{NB} g^p \, d\mu,
\end{align*}
so that {
\begin{align}
\label{rdy2iterate.eq} 
\varphi(r_0) \leq M_1 \widetilde{C}_N^{(p-q)\ell} + \epsilon_p \varphi(\rho_0) + \epsilon_p^{-1} M_2.
\end{align}
Now, we set up an iteration scheme to conclude: We fix $K \in \N$ large enough to guarantee $\sum_{\ell = 0}^\infty N^{-K \ell} \leq N$, initiate with $t_0 := R$ and put $t_{\ell+1} := t_{\ell} + N^{-K (\ell+1)}R$ for $\ell = 0,1,\dots$. Then $R\le t_\ell < t_{\ell + 1} \le NR$ and $N^{K(\ell+1)}(t_{\ell+1} - t_\ell) = R$ so that
\begin{align}
\varphi(t_\ell) \le M_1  \widetilde{C}_N^{(p-q)K(\ell+1)} + \epsilon_p^{-1} M_2 +  \epsilon_p \varphi(t_{\ell + 1}).
\end{align}
Iterating the above inequality we obtain for any $\ell_0 \in \N$
\begin{equation*}
\begin{split}
\varphi(t_0) &\le M_1\sum_{\ell = 1}^{\ell_0} \widetilde{C}_{N}^{(p-q)K\ell} \cdot \epsilon_p^{\ell -1} + M_2\sum_{\ell = 1}^{\ell_0} \epsilon_p ^{\ell - 2 } + \epsilon_p^{\ell_0}\varphi(t_{\ell_0})
\\& \le   C   M_1 + C M_2 + C\epsilon_p^{\ell_0}\varphi(NR)
\end{split}
\end{equation*}
provided that $\epsilon_p \leq (2\widetilde{C}_N^{(p-q)K})^{-1} \leq 1/2$ by now fixing $p \in (q,2q)$ with $p-q$ small enough.}

Noting that $\varphi(NR) < \infty$ (by truncation of $u$) and $t_0 = R$, we may let $\ell_0 \to \infty$ above to conclude
$$\varphi(R) \le CM_1 + C M_2.$$
Upon replacing $M_1,M_2$, $\alpha_p(NB)$ and $\varphi(R)$ we obtain
\begin{equation}
\begin{split}
\int_B u_m^{p-q} u^q\, d\mu & \le C \mu(NB) \big(a_u(NB) + (a_{f^q}(NB))^{1/q} + (NR)^\beta(a_{h^s}(NB))^{1/s}\big)^p 
\\& \qquad + C \int_{NB} f^p\, d\mu +  C \int_{NB} g^p \, d\mu.
\end{split}
\end{equation}
Dividing both sides of the inequality by $\mu(B)$, taking $p-$th roots and letting $m\to \infty$  we obtain  \eqref{GGeq2cor.eq} except for the presence of the term  $(\fint_{NB}  g^p\, d \mu)^{1/p}$. We handle this term using the definition of $g$ in terms of $h$ and the definition of $A_R$ in \eqref{eq:AR}, obtaining
\begin{align*}
\label{}
 \left(\fint_{NB}  g^p\, d \mu\right)^{1/p}   & = A_{R}\left(\fint_{NB}h^{ps/q} \, d\mu\right)^{1/p}   \\
    &  =  C_1 (NR)^\beta \left(\fint_{NB}h^s \, d\mu\right)^{1/s - 1/q} \left(\fint_{NB}h^{ps/q} \, d\mu\right)^{1/p} 
    \\
    & \le C_1  (NR)^\beta  \left(\fint_{NB}h^{ps/q} \, d\mu\right)^{q/ps- 1/p} \left(\fint_{NB}h^{ps/q} \, d\mu\right)^{1/p}
    \\
    & = C_1 (NR)^\beta \left(\fint_{NB}h^{ps/q} \, d\mu\right)^{q/sp}. \qedhere
\end{align*}
 \end{proof}
 
\section{Quasi-metric spaces}
\label{sec:quasi}

In Theorem \ref{GeneralGehering.cor}, it is possible to relax the structural assumption of $(X,d,\mu)$ being a metric space by allowing the constant $K$ in the quasi triangle inequality to take values greater than one. The proof of Theorem \ref{GeneralGehering.cor} does not carry over as such (or rather it becomes very technical) but we can take advantage of the fact that every quasi-metric ($K > 1$) is equivalent to a power of a proper metric ($K = 1$). See \cite{AIN1998,MS1979,PS2009}. The following proposition is from \cite{PS2009}. 
 
\begin{proposition}
\label{prop:quasimetric}
Let $(X,\rho)$ be a quasi-metric space and let $0 < \delta \le 1$ be given by
$(2K)^ \delta = 2$. Then there is another quasi-metric $\tilde{\rho}$ such that $\tilde\rho^{\delta}$ is a metric and for all $x,y \in X$, 
\[ E^{-1} \rho(x,y) \leq  \tilde{\rho} (x,y) \leq E \rho(x,y), \]
where $E \geq 1$ is a constant only depending on the quasi triangle inequality constant of $\rho$.
\end{proposition}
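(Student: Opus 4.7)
The plan is to build $\tilde\rho$ via the classical Frink chain construction and then verify all the claimed properties from the resulting definition.

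First, I would define
$$d(x,y) := \inf \sum_{i=0}^{n-1} \rho(x_i, x_{i+1})^\delta,$$
where the infimum runs over all finite chains $x = x_0, x_1, \dots, x_n = y$ in $X$, and set $\tilde\rho := d^{1/\delta}$. This candidate $d$ is automatically symmetric and satisfies the triangle inequality by concatenation of chains, so once I establish non-degeneracy it will be a genuine metric. The upper bound $\tilde\rho \leq \rho$ is immediate from the single-step chain $x_0 = x$, $x_1 = y$, which gives $d(x,y) \leq \rho(x,y)^\delta$.

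The main work, and the entire reason for the specific choice $(2K)^\delta = 2$, is the matching lower bound
$$\rho(x,y)^\delta \leq C\, d(x,y),$$
with $C$ depending only on $K$. This reduces to the Aoki--Frink chain inequality
$$\rho(x_0, x_n)^\delta \leq C \sum_{i=0}^{n-1} \rho(x_i, x_{i+1})^\delta$$
for every finite chain in $X$, which I would prove by induction on $n$. For the inductive step, I would set $S$ to be the right-hand side, select the largest index $k$ with $\sum_{i=0}^{k-1} \rho(x_i, x_{i+1})^\delta \leq S/2$ (so that the tail $\sum_{i=k+1}^{n-1} \rho(x_i, x_{i+1})^\delta \leq S/2$ by maximality of $k$, while the single middle term $\rho(x_k, x_{k+1})^\delta \leq S$), apply the inductive hypothesis to the two subchains $x_0, \dots, x_k$ and $x_{k+1}, \dots, x_n$, and then combine via two applications of the quasi-triangle inequality. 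This is the main obstacle: one must balance the accumulated factors of $K$ against $\delta$, and the calibration $(2K)^\delta = 2$ is precisely what is needed to make the induction close with a constant $C$ independent of $n$. Passing to the infimum over chains then yields $\rho(x,y)^\delta \leq C\, d(x,y)$ and, in particular, non-degeneracy of $d$.

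With the chain inequality in hand, the rest is routine. I would conclude that $\tilde\rho^\delta = d$ is a metric and that $\tilde\rho$ is itself a quasi-metric with constant $2^{1/\delta - 1}$, obtained from the elementary convexity inequality $(a+b)^{1/\delta} \leq 2^{1/\delta - 1}(a^{1/\delta} + b^{1/\delta})$ valid for $1/\delta \geq 1$. The two-sided comparison $E^{-1}\rho \leq \tilde\rho \leq E\rho$ then holds with $E := \max(1, C^{1/\delta})$, which depends only on $K$ as required.
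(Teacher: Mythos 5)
The paper itself gives no proof of this proposition; it cites Paluszy\'nski--Stempak \cite{PS2009}. Your attempt at a self-contained proof via the Frink chain construction is reasonable in outline, and the bookkeeping steps are fine: $d$ is symmetric, satisfies the triangle inequality, $d\le \rho^\delta$ gives $\tilde\rho\le\rho$, and the quasi-metric constant $2^{1/\delta-1}$ for $\tilde\rho=d^{1/\delta}$ is derived correctly.

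The gap is in the central step, the chain inequality $\rho(x_0,x_n)^\delta\le C\sum_i\rho(x_i,x_{i+1})^\delta$. The median-split induction you describe does not close, for any constant $C$. Write $q_i:=\rho(x_i,x_{i+1})^\delta$, $S:=\sum_i q_i$, and note that $(2K)^\delta=2$ turns the quasi-triangle inequality into $\rho(a,c)^\delta\le 2\max\bigl(\rho(a,b)^\delta,\rho(b,c)^\delta\bigr)$. Choosing the largest $k$ with $\sum_{i<k}q_i\le S/2$ gives, via the inductive hypothesis, $\rho(x_0,x_k)^\delta\le CS/2$, $\rho(x_{k+1},x_n)^\delta\le CS/2$, and trivially $q_k\le S$. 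Two applications of the displayed inequality yield
\[
\rho(x_0,x_n)^\delta\le 2\max\bigl(CS/2,\;2\max(S,CS/2)\bigr)\ge 2\max(CS/2,\,2S).
\]
If $C<4$ the right entry dominates and the bound is $4S\not\le CS$; if $C\ge 4$ it is at least $2CS\not\le CS$. The same conclusion follows if one instead expands $\rho(x_0,x_n)\le K^2\rho(x_0,x_k)+K^2\rho(x_k,x_{k+1})+K\rho(x_{k+1},x_n)$ and uses $(\sum u_j)^\delta\le\sum u_j^\delta$: closure would require $K^{2\delta}+K^{\delta}<2$, which is impossible for $K>1$ since $K^\delta\ge 1$. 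So the assertion that ``the calibration $(2K)^\delta=2$ is precisely what is needed to make the induction close'' is exactly the step that fails.

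This is the well-known gap in Frink's 1937 argument. The chain inequality is in fact true with a universal constant, but a correct proof needs a genuinely different device: one must iterate the weak triangle inequality along a \emph{weighted} binary tree over the chain edges, placing edge $i$ at depth $d_i\lesssim\log(S/q_i)$ so that $2^{d_i}q_i\lesssim S$ uniformly (a Kraft-type/alphabetic-code construction), or follow the alternative direct argument of \cite{PS2009}. As written, your proposal does not supply this ingredient and the proof is incomplete precisely at the step you identify as the main obstacle.
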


{With Proposition~\ref{prop:quasimetric} at hand, the following theorem is a straightforward consequence of its metric counterpart. For the reader's convenience and  since akin reductions to the metric case will be used at other occasions in this paper, we present the full details here.}

\begin{theorem}\label{GeneralGehering.quasi}
Let $(X,\rho,\mu)$ be a space of homogeneous type. Let {$s, \beta >0$ and $q>1$ be such that $s<q$} and $\beta \ge  D(1/s-1/q)$ where $D$ is any number satisfying \eqref{eq:homdim}. Let $N>1$ and  $(\alpha_k)_{k \geq 0}$ be a non-increasing sequence of positive numbers with $\alpha := \sum_k \alpha_k  < \infty$. Define 
\[a_u (B) := \sum_{k=0}^{\infty} \alpha_k \fint_{N^{k} B} u \, d \mu  \] 
for $u \geq 0$ locally integrable and $B$ a quasi-metric ball. 

Suppose that $u, f, h \ge 0$ with $u^q, f^q, h^s \in L^1_{loc}(X, d\mu)$ and there exists a constant $A$ such that for every ball $B = B(x, R)$, 
\begin{equation}
\label{eq:quasi1} 
\left(\fint_B u^q \, d\mu\right)^{1/q} \le Aa_{u}(B) + (a_{f^q}(B))^{1/q} + R^\beta(a_{h^s}(B))^{1/s}.
\end{equation} 
Then there exists $p > q$ depending on $\alpha_0, \alpha, A, q, s,K,N$ and $C_d$ such that for all balls $B$, 
\begin{equation}
\label{eq:quasi2} 
\begin{split}
\left(\fint_{ B } u^p \, d\mu\right)^{1/p} &\lesssim \tilde{a}_{u}({N}B) + (\tilde{a}_{f^q}({N}B))^{1/q} + R^\beta(\tilde{a}_{h^s}({N}B))^{1/s} 
\\& \qquad+ \left(\fint_{N E^{2/\delta} B}f^p \, d\mu\right)^{1/p} + R^\beta\left(\fint_{ N E^{2/\delta} B}h^{ps/q} \, d\mu\right)^{q/sp}.
\end{split}
\end{equation} 
Here, $\tilde{a}_u$ is obtained from $a_u$ by replacing the sequence $\alpha_k$ with $\alpha_{\max(0, k - j_0 - j_1)}$, where $j_0$ and $j_1$ are the minimal integers with $E^{2} \leq N^{j_0}$ and $E^{2/\delta} \leq N^{j_1}$ and $E, \delta$ are the constants from Proposition \ref{prop:quasimetric}.
The implicit constant depends on $\alpha_0,\alpha,  A, q, s, \beta, K, N$ and  $C_d$. \end{theorem}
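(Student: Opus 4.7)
My plan is to reduce to the metric case of Theorem~\ref{GeneralGehering.cor} via Proposition~\ref{prop:quasimetric}, which yields an equivalent quasi-metric $\tilde\rho$ with $E^{-1}\rho \leq \tilde\rho \leq E\rho$ such that $d := \tilde\rho^\delta$ is a genuine metric; the measure $\mu$ remains doubling on $(X,d)$ with homogeneous dimension $D/\delta$. The guiding observation is that, with the auxiliary dilation factor $M := N^\delta > 1$, the $M$-dilates of a $d$-ball align with the $N$-dilates of the associated $\rho$-ball: from $B_d(x,s) = B_{\tilde\rho}(x, s^{1/\delta})$ and the two-sided equivalence of $\rho$ and $\tilde\rho$, one reads off the chain
\[
N^k B_\rho(x, R) \;\subset\; M^k \cdot B_d(x,(ER)^\delta) \;\subset\; N^k B_\rho(x, E^2 R) \;\subset\; N^{k+j_0} B_\rho(x, R),
\]
in which all four sets have $\mu$-comparable measure by doubling.

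First I would verify the hypothesis of Theorem~\ref{GeneralGehering.cor} on $(X,d,\mu)$ with dilation factor $M$, parameter $\beta_d := \beta/\delta$, and the non-increasing sequence $\beta_l := \alpha_{\max(0, l - j_0)}$ (summable since $\sum_l \beta_l = j_0\alpha_0 + \alpha$). Given a $d$-ball $B_d = B_d(x,r)$, I apply \eqref{eq:quasi1} to the enclosing $\rho$-ball $B_\rho(x, Er^{1/\delta})$, transfer the left-hand side to $B_d$ via doubling, and replace each $\rho$-average $\fint_{N^k B_\rho}$ by $\fint_{M^{k+j_0} B_d}$ using the chain above; re-indexing then produces a $d$-sum with sequence $\beta$. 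The scaling $(Er^{1/\delta})^\beta \asymp r^{\beta_d}$ matches the metric theorem's convention, and the dimensional condition $\beta_d \geq (D/\delta)(1/s - 1/q)$ is equivalent to the stated assumption on $\beta$.

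Next I apply Theorem~\ref{GeneralGehering.cor} to the $d$-ball $\tilde B := B_d(x,(ER)^\delta)$, which contains $B$ with comparable measure, obtaining an exponent $p>q$ and the $d$-metric conclusion. Translating back, the left-hand side passes from $\tilde B$ to $B$ by doubling, and each average $\fint_{M^{l+1}\tilde B}$ on the right is bounded by $C\fint_{N^{l+1+j_0} B}$ via $M^{l+1}\tilde B = B_{\tilde\rho}(x, N^{l+1}ER) \subset N^{l+1}E^2 B \subset N^{l+1+j_0} B$. Re-indexing the resulting sum $\sum_l \beta_l \fint_{N^{l+1+j_0}B}$ yields $\sum_m \alpha_{\max(0, m - 1 - 2j_0)}\fint_{N^m B}$, which is dominated by $\tilde a_u(NB)$ since $j_1 \geq j_0$ (as $E^{2/\delta} \geq E^2$) and $\alpha$ is non-increasing. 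Analogous arguments treat the $f^q$ and $h^s$ sums (using $\tilde r^{\beta_d} \asymp R^\beta$), and the remaining single averages $(\fint_{M\tilde B} f^p)^{1/p}$ and $(\fint_{M\tilde B} h^{ps/q})^{q/sp}$ are controlled via $M\tilde B \subset NE^{2/\delta}B$ and doubling.

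The main technical hurdle is the careful bookkeeping of integer shifts: the multiplicative constants $E^2$ and $E^{2/\delta}$ that are incurred whenever one crosses between the $\rho$- and $d$-metrics must be absorbed into integer dilations $N^{j_0}$ and $N^{j_1}$, which then propagate consistently into both the modified sequence $\alpha_{\max(0, \cdot - j_0 - j_1)}$ of the $\tilde a$ functionals and the enlargement factor $NE^{2/\delta}$ in the final two averages of the conclusion.
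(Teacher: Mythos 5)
Your proposal is correct and follows essentially the same reduction as the paper: metrize via Proposition~\ref{prop:quasimetric}, transfer the hypothesis to $(X,d,\mu)$ with dilation $M=N^\delta$, $\beta_d=\beta/\delta$ and the shifted sequence $\alpha_{\max(0,\cdot-j_0)}$, apply Theorem~\ref{GeneralGehering.cor}, and translate back. Your bookkeeping is in fact slightly sharper—you incur a total shift of $2j_0$ rather than $j_0+j_1$—and you correctly observe that since $j_0\le j_1$ (because $E^{2/\delta}\ge E^2$) and $\alpha$ is non-increasing, your tighter tail is still dominated by the stated $\tilde a_u(NB)$, so the conclusion is reached.
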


\begin{remark}
 The same remarks as after Theorem \ref{GeneralGehering.cor} apply. 
\end{remark}

\begin{proof}
Let $d$ be the metric so that $d^{1/ \delta}$ with $\delta \in (0,1]$ is equivalent to $\rho$, provided by Proposition \ref{prop:quasimetric}. Then there is a constant $E > 1$ only depending on the quasi-metric constant $K$ of $\rho$ so that 
\[B^{\rho}(x,r) = \{z: \rho(z,x) < r \} \subseteq \{z: E^{-1} d^{1/\delta}(z,x) < r \} = B^{d}(x,(Er)^{\delta}) \]
and 
\[B^{\rho}(x,r) = \{z: \rho(z,x) < r \} \supseteq \{z: E d^{1/\delta}(z,x) < r \} = B^{d}(x,(E^{-1}r)^{\delta}).\]
In total $B^{d}(x,(E^{-1}r)^{\delta}) \subseteq B^{\rho}(x,r) \subseteq B^{d}(x,(Er)^{\delta})$. Consequently $\mu$ is doubling with respect to the metric $d$, and we also see that the hypothesis \eqref{eq:quasi1} implies that
\begin{equation*}
\begin{split}
\left(\fint_{ B^{d} (x, (E^{-1}R )^{ \delta}  ) } u^q \, d\mu\right)^{1/q} & \lesssim \sum_{k=0}^{\infty} \alpha_k \fint_{ N^{\delta k}  E^{\delta}  B^{d} (x,  R ^{ \delta}  )} u \, d \mu  \\
&\qquad + \left ( \sum_{k=0}^{\infty} \alpha_k \fint_{N^{\delta k}  E^{\delta}  B^{d} (x,  R ^{ \delta}  )} f^{q} \, d \mu \right )^{1/q} \\
& \qquad \qquad + R^{   \beta    } \left ( \sum_{k=0}^{\infty} \alpha_k \fint_{N^{\delta k}  E^{\delta}  B^{d} (x,  R ^{ \delta}  )} h^{s} \, d \mu \right)^{1/s}
\end{split}
\end{equation*}
holds for all $x \in X$ and $R > 0$. Setting $R' := (E^{-1}R)^{\delta}$, we can rewrite this as 
\begin{equation*}
\begin{split}
\left(\fint_{ B^{d} (x, R') } u^q \, d\mu\right)^{1/q} & \lesssim \sum_{k=0}^{\infty} \alpha_k \fint_{ N^{\delta k}    B^{d} (x, E^{2\delta} R ' )} u \, d \mu  \\
&\qquad + \left ( \sum_{k=0}^{\infty} \alpha_k \fint_{N^{\delta k}   B^{d} (x, E^{2\delta} R' )} f^{q} \, d \mu \right )^{1/q} \\
& \qquad \qquad + (R')^{   \beta / \delta   } \left ( \sum_{k=0}^{\infty} \alpha_k \fint_{N^{\delta k}    B^{d} (x, E^{2\delta} R' )} h^{s} \, d \mu \right)^{1/s}.
\end{split}
\end{equation*}

We set $N' := N^{\delta}$. Then $j_0$ is the smallest positive integer so that $E^{2\delta} \leq (N')^{j_0}$. We note that 
\begin{align*}
\sum_{k=0}^{\infty} \alpha_k \fint_{ (N')^{ k}    B^{d} (x, E^{2\delta} R ' )} u \, d \mu
	& \lesssim \sum_{k=0}^{\infty} \alpha_k \fint_{ (N')^{ k + j_0}    B^{d} (x, R ' )} u \, d \mu \leq \sum_{k=0}^{\infty} \alpha_k' \fint_{ (N')^{ k }    B^{d} (x, R ' )} u  d \mu,
\end{align*}
where $\alpha_k' := \alpha_{\max(k-j_0, 0)}$. Analogous estimates hold with $f^q$ and $h^s$ in place of $u$. Finally, we set $\beta' := \beta /\delta$ so that \eqref{GGeq1cor.eq} is satisfied in the metric space $(X,d,\mu)$ with $(\alpha_k')_{k}, \beta', N'$ replacing $(\alpha_k)_{k}, \beta ,N$ there. We also have control over the homogeneous dimension of $(X,d,\mu)$. Indeed, for $x \in X$ and $R > r$, we see that {$ER > E^{-1}r$ and therefore}
\[\frac{\mu( B^{d}(x, R^{\delta}))}{\mu( B^{d}(x, r^{\delta} ))} \leq \frac{\mu( B^{\rho}(x,ER))}{\mu( B^{\rho}(x, E^{-1}r) )} \lesssim \left(\frac{E^{2}R}{r} \right)^{ D} , \]
where $D$ is a number satisfying \eqref{eq:homdim} for $(X,\rho,\mu)$. 

It follows that $D'=D\delta^{-1}$ satisfies \eqref{eq:homdim} for $(X,d,\mu)$. As a consequence, $\beta' = \beta / \delta \geq D'(1/s-1/q)$, and we can apply Theorem \ref{GeneralGehering.cor}.

We obtain
\[\begin{split}
\left(\fint_{  B^{d} } u^p \, d\mu\right)^{1/p} &\lesssim \sum_{k=0}^{\infty} \alpha_k' \fint_{ (N')^{k+1} B^{d} } u \, d \mu + \left ( \sum_{k=0}^{\infty} \alpha_k' \fint_{ (N')^{k+1} B^{d}} f^{q} \, d \mu \right )^{1/q} \\
& \quad + (R')^{\beta'    } \left ( \sum_{k=0}^{\infty} \alpha_k' \fint_{ (N')^{k+1} B^{d}} h^{s} \, d \mu \right)^{1/s}
\\& \qquad+ \left(\fint_{ N'  B^{d}}f^p \, d\mu\right)^{1/p} + (R')^{\beta'}\left(\fint_{ N' B^{d}}h^{ps/q} \, d\mu\right)^{q/sp}
\end{split}\]
for all balls $B^{d}$ with radius $R'$. Note that $R'$ is arbitrary. Comparing the $d$-balls with $\rho$-balls once again, we see that $B^\rho(x, (E^{-1} r)^{1/\delta}) \subset B^{d}(x,r) \subset B^\rho(x, (E r)^{1/\delta})$. Arguing as in the beginning of the proof and denoting $R = (E^{-1} R')^{1/\delta}$, we can get back to an inequality in the quasi-metric space $(X,\rho,\mu)$: {We only need to recall that $N' = N^{\delta}$, that $j_1$ is the smallest integer so that $E^{2/\delta} \leq N^{j_1}$ and set $\alpha_k'' := \alpha_{\max(0, k- j_1)}' = \alpha_{\max(0, k - j_0 - j_1)}$}. This together with the doubling condition implies that \eqref{eq:quasi2} holds.
\end{proof}

\section{Variants}
\label{sec:variants}

One might wonder whether one can use in the proof of Theorem~\ref{GeneralGehering.cor} the fractional maximal operator $M^{\beta s}$ where \[M^{\beta}v(x):= \sup_{B\ni x}    r(B)^{ \beta } \fint_{B} |v|, \quad \ x \in X,\,  \beta>0,  \] 
{to control the terms stemming from $h^s$ more efficiently. (Here $r(B)$ denotes the radius of $B$}.) However, this operator has no boundedness property in this generality and one has to assume \emph{volume lower bound} in the following sense: 
\begin{equation}
\label{eq:lowerbound}
\exists Q >0 \ : \ \forall\ \mathrm{balls} \ B,  \quad \mu(B)\gtrsim r(B)^Q.
\end{equation} 

\begin{lemma}\label{lem:frac} Let $(X,\rho,\mu)$ be a space of homogeneous type. Assume that
the volume has a lower bound with exponent $Q > 0$. Then $M^{\beta}$ is bounded  from $L^p(X)$ to $L^{p^*}(X)$ when $ 1<p$ and $0 < \beta < Q/p $ with $p^*= \frac{pQ}{Q-\beta p}$. For $p=1$, it is weak type $(1, 1^*)$. 
\end{lemma}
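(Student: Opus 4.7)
The plan is to reduce this fractional Hardy--Littlewood--Sobolev bound to the already known mapping properties of the ordinary Hardy--Littlewood maximal operator $M$ on $(X,\rho,\mu)$ (strong $(p,p)$ for $p>1$ by Theorem~3.13 in \cite{BB2011}; weak $(1,1)$ by Lemma~3.12 there) via the pointwise interpolation inequality
$$M^\beta v(x) \lesssim [Mv(x)]^{1-\beta p/Q}\, \|v\|_p^{\beta p/Q}, \qquad x\in X.$$

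To derive this pointwise bound, I would fix $x\in X$ and a ball $B\ni x$, and split the average multiplicatively as $\fint_B |v| = \bigl(\fint_B|v|\bigr)^{1-\beta p/Q}\bigl(\fint_B|v|\bigr)^{\beta p/Q}$. The first factor is controlled by $[Mv(x)]^{1-\beta p/Q}$. For the second factor I would apply H\"older's inequality to get $\fint_B|v|\le \mu(B)^{-1/p}\|v\|_p$, and then multiply through by $r(B)^\beta$. The volume lower bound \eqref{eq:lowerbound} converts $r(B)^\beta\mu(B)^{-\beta/Q}$ into a constant, yielding the pointwise inequality after taking the supremum over $B\ni x$. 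The hypothesis $\beta p < Q$ is exactly what is needed to keep all interpolation exponents nonnegative.

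For $p>1$, I would raise the pointwise bound to the power $p^*$, use the algebraic identity $p^*(1-\beta p/Q)=p$, and integrate, obtaining
$$\int_X [M^\beta v]^{p^*}\,d\mu \lesssim \|v\|_p^{p^*-p}\int_X (Mv)^p\,d\mu \lesssim \|v\|_p^{p^*},$$
where the last step invokes the $L^p$-boundedness of $M$. For $p=1$, inverting the pointwise inequality gives an inclusion of the form $\{M^\beta v>\lambda\}\subset \{Mv > c(\lambda/\|v\|_1^{\beta/Q})^{Q/(Q-\beta)}\}$, and the weak $(1,1)$ estimate for $M$ then delivers $\mu(\{M^\beta v>\lambda\})\lesssim (\|v\|_1/\lambda)^{1^*}$ after simplification.

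The argument is essentially classical, so there is no real obstacle; the only point that requires attention is checking that the exponents match (in particular the clean cancellation $p^*(1-\beta p/Q)=p$ and that the volume lower bound exactly balances the factor $r(B)^\beta$). Everything else follows by standard interpolation and the already cited boundedness of $M$ on spaces of homogeneous type.
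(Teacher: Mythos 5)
Your proof is correct, and the exponent arithmetic you flag as the main thing to check does indeed work out: with $p^* = \tfrac{pQ}{Q-\beta p}$ one has $p^*(1-\beta p/Q)=p$ and $p^*\cdot\tfrac{\beta p}{Q}=p^*-p$, so raising the pointwise inequality to the power $p^*$ and integrating gives exactly the strong $(p,p^*)$ bound, and the $p=1$ case gives the weak-type endpoint. The derivation of the pointwise inequality is also sound: you need $1-\beta p/Q\ge 0$ for the multiplicative split, which is precisely $\beta p\le Q$, and the volume lower bound cancels $r(B)^\beta\mu(B)^{-\beta/Q}$.

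The route is slightly different from the paper's, which is worth remarking on. The paper records only the $L^1$-version of the pointwise estimate, $M^\beta v(x)\lesssim Mv(x)^{1-\beta/Q}\|v\|_1^{\beta/Q}$, deduces the weak $(1,1^*)$ bound from it, and then invokes interpolation (implicitly together with the easy endpoint $M^\beta:L^{Q/\beta}\to L^\infty$, which follows from H\"older and the volume lower bound) to get the strong $(p,p^*)$ bounds. You instead prove a whole family of pointwise estimates, one for each $p$, with $\|v\|_p$ appearing on the right; this lets you read off the strong bound for each $p>1$ directly, with no appeal to an interpolation theorem. Your version is a bit more self-contained (only the strong $(p,p)$ and weak $(1,1)$ bounds for $M$ from \cite{BB2011} are used), at the mild cost of deriving the H\"older-refined pointwise inequality for every $p$ rather than one clean inequality. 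Both are standard and both are fine; they buy the same thing.
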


\begin{proof} See e.g. Section~2 in \cite{HKNT} {for a simple proof on metric spaces with doubling measure that applies \emph{verbatim} in the quasi-metric setting. In fact,} the result follows from the inequality {$M^{\beta }v(x) \lesssim Mv(x)^{1-\beta/Q} \|v\|_{1}^{\beta/Q}$} using the lower bound, the uncentered maximal function $M$ and interpolation. 
\end{proof}
 
{We obtain the following variant in the presence of a volume lower bound.} 
 
\begin{theorem}\label{homers.4.3.thrm} Let $(X,\rho,\mu)$ be a space of homogeneous type having a volume lower bound with exponent $Q$. Let {$s>0$, $\beta \geq 0$ and $q>1$ be such that $s<q$ and $\beta \le Q (1/s -1/q)$}. 
Suppose that $u, f, h \ge 0$ with $u^q, f^q, h^s \in L^1_{loc}(X,d\mu)$ and \eqref{eq:quasi1} holds. Then there exists $p > q$ such that \eqref{eq:quasi2} holds with  $R^\beta(\fint_{{NE^{2/\delta} B}} h^{ps/q} \, d\mu)^{q/sp}$ replaced by $\mu(B)^{\beta/Q}(\fint_{{NE^{2/\delta}B}} h^{p_*} \, d\mu)^{1/p_*}$ where $p_*= \frac{pQ}{Q+\beta p}$.
\end{theorem}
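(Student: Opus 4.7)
The plan is to follow the proof of Theorem~\ref{GeneralGehering.cor} \emph{verbatim}, with the only conceptual change being to skip the auxiliary function $g$ from Step~1 and to handle the $h$-term directly via the fractional maximal operator $M^{\beta s}$. Since the volume lower bound \eqref{eq:lowerbound} is preserved, up to parameters, by the metric equivalence from Proposition~\ref{prop:quasimetric}, the reduction to the metric case proceeds as in the proof of Theorem~\ref{GeneralGehering.quasi} and is what produces the factor $NE^{2/\delta}$ in the conclusion, so it suffices to adapt Steps~4--7 in the metric setting.

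First, I would replace $G(x,r)=(\fint_{B(x,r)}g^q\,d\mu)^{1/q}$ by the natural fractional quantity $H^{\beta}_s(x,r) := r^\beta (\fint_{B(x,r)}h^s\,d\mu)^{1/s}$. The threshold $\lambda_0$ gets modified accordingly: by \eqref{doubavgineqcor.eq}, for $x \in B_{r_0}$ one has $H^{\beta}_s(x,\rho_0-r_0) \le (NR)^\beta (C_N^{\ell+j}\alpha_0^{-1} a_{h^s}(NB))^{1/s}$, which is absorbed into the new $\lambda_0$. The stopping time radius $r_x$ is defined with this $H^{\beta}_s$ in place of $G$, and Vitali selection yields the same balls $B_i=B(x_i,r_i)$ satisfying observation~(ii), with observation~(iii) now reading $(N^k r_i)^\beta(\fint_{N^k B_i}h^s\,d\mu)^{1/s}\le C\lambda$ for $0\le k<m_i$. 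Crucially, observation~(i) will offer the three alternatives $\fint_{B_i}u>\lambda/3$, $(\fint_{B_i}f^q)^{1/q}>\lambda/3$, or $r_i^\beta(\fint_{B_i}h^s)^{1/s}>\lambda/3$; the third one, combined with $B_i\subseteq B_{\rho_0}$ and $r(B_i)=r_i$, translates into $M^{\beta s}(h^s\mathbbm{1}_{B_{\rho_0}})(x)>(\lambda/3)^s$ for every $x\in B_i$.

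Step~5 then still yields $u^q(U_\lambda)\le C\lambda^q\mu(\cup_i B_i)$: the control $r_i^\beta(a_{h^s}(B_i))^{1/s}\lesssim\lambda$ required for the reverse H\"older estimate on $B_i$ is obtained by splitting the defining sum at $k=m_i$, using $\beta\ge 0$ together with observation~(iii) for $k<m_i$ (so that $r_i^\beta\le(N^k r_i)^\beta$), and re-indexing together with $r_i\le (\rho_0-r_0)/N\le NR$ for $k\ge m_i$ (so that $r_i^{\beta s}$ is absorbed into $(NR)^{\beta s}$), exactly as in~\eqref{eq:splittingthesum}. The inclusion replacing \eqref{uqbnd2cor.eq} will have $\{M^{\beta s}(h^s\mathbbm{1}_{B_{\rho_0}})>(\lambda/3)^s\}$ in place of the last maximal-function set. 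Step~6 will therefore produce exactly the same terms $II_1,II_2,I$, while, after a layer-cake change of variables, $II_3$ becomes
\[
II_3 \lesssim \frac{p-q}{p/s}\int_X \bigl(M^{\beta s}(h^s\mathbbm{1}_{B_{\rho_0}})\bigr)^{p/s}\,d\mu.
\]
Applying Lemma~\ref{lem:frac} with exponents $a=p_*/s$ and $a^*=p/s$, which satisfy the relation $a^*=aQ/(Q-\beta s \cdot a)$, I bound this by a constant multiple of $(p-q)(\int_{B_{\rho_0}}h^{p_*}\,d\mu)^{p/p_*}$.

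The hard part will be to check the admissibility conditions of Lemma~\ref{lem:frac}: the condition $\beta s<Q/a$ simplifies to $0<Q^2$ and is always true, whereas $a>1$ amounts to $p>sQ/(Q-s\beta)$; a direct computation shows that at the endpoint $\beta=Q(1/s-1/q)$ this lower bound equals $q$, so $a>1$ persists for every $p>q$ sufficiently close to $q$, which is precisely what the iteration in Step~7 permits. The iteration and the final passages of Theorem~\ref{GeneralGehering.cor} then carry over unchanged; upon dividing by $\mu(B)$ and taking $p$-th roots, the identity $1/p_*-1/p=\beta/Q$ combined with doubling converts the $II_3$-contribution into $\mu(B)^{\beta/Q}(\fint_{NE^{2/\delta}B}h^{p_*}\,d\mu)^{1/p_*}$, as claimed.
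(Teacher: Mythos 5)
Your proposal is correct and matches the paper's proof essentially step by step: both reduce to the metric case, both drop the auxiliary function $g$ in favor of the direct quantity $r^\beta(\fint h^s)^{1/s}$, both estimate $II_3$ via the fractional maximal operator $M^{\beta s}$ and Lemma~\ref{lem:frac} with the exponent $\sigma = p_*/s$, and both recover the $\mu(B)^{\beta/Q}$ prefactor from the identity $1/p_* - 1/p = \beta/Q$ after dividing by $\mu(B)$ and taking $p$-th roots. You spell out the Step~5 adaptation (bounding $r_i^\beta(a_{h^s}(B_i))^{1/s}\lesssim\lambda$ via $\beta\ge 0$ and observation (iii)) a bit more explicitly than the paper, which merely says the argument can be ``followed'', but the underlying argument is the same.
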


\begin{proof}  
{It suffices to give a proof for $(X,\rho,\mu)$ a metric space with doubling measure. Then we can apply the general reduction argument from the previous section. In this regard, we note that if $\rho$ is equivalent to $d^{1/\delta}$, then $d$ has lower volume bound with exponent $Q':= Q/\delta$.} 

For any $p>q$ set $\sigma := \frac{pQ}{s(Q + \beta p)} = \frac{p_*}{s}$. Note the condition $\beta q s \le Q(q-s)$ ensures for all $p>q$ the bound $\beta p s < Q(p - s)$. {Hence $\sigma > 1$}. Now we indicate the changes in the proof of Theorem~\ref{GeneralGehering.cor}. 

One does not introduce the function $g$ in Step~1 and the function $G$ in Step~4 becomes $H(x,t)=r^\beta \big(\fint_{B(x,t)} h^s \, d \mu\big)^{1/s}$. The choice of $\lambda_{0}$ is similar and {then we can follow the argument until we need to estimate $II_{3}$ in Step~6. Here we now have} 
\begin{align*}
  {II_3 }
&= C(p-q)\int_0^m \lambda^{p-1} \mu(\{ M^{\beta s}(h^s\mathbbm{1}_{B_{\rho_0}}) > (\lambda/3)^s\}) \, d\lambda \\
& \lesssim  \int_X (M^{\beta s}(h^s\mathbbm{1}_{B_{\rho_0}}))^{p/s} \, d\mu \\ 
& \lesssim \left(\int_{NB } h^{s\sigma}\, d\mu\right)^{p/(s\sigma)}
{= \left(\int_{NB } h^{p_*}\, d\mu\right)^{p/p_*}}
\end{align*}
{by definition of $\sigma$} and we used the $L^{\sigma}(X) \to L^{p/s}(X)$ boundedness of $M^{\beta s }$ from Lemma \ref{lem:frac}. Recall near then end of Theorem \ref{GeneralGehering.cor} we divide by $\mu(B)$ then take $p$-th roots. Thus, the power of $\mu(B)$ in front of $(\fint_{NB} h^{p_*} \, d\mu)^{1/p_*}$  comes from the equality
$$ \mu(B)^{-1} \left(\int_B h^{p_*}\, d \mu \right)^{\frac{p}{p_*}} = \mu(B)^{\beta p/Q} \left(\fint_B h^{p_*} \, d\mu\right)^{\frac{p}{p_*}}.$$
\end{proof}
  
\begin{remark}
\label{rem:homers.4.3.thrm}
Assume $\beta=1$, $s=\frac{2n}{n+2}$ and  $q=2$ in the Euclidean space $\RR^n$ with Lebesgue measure, which is typical of elliptic equations. {Then the Lebesgue exponent for $h$ in Theorem~\ref{GeneralGehering.quasi} is $\frac{ps}{q}= \frac{pn}{n+2}$ while above we get $p_*= \frac{pn}{n+p}$, which is smaller. If $\beta =0$, then $p_* = p$. Of course the interest is to have $\beta$ as large as possible so that $p_*$ is as small as possible, but in applications to PDEs the value of $\beta$ is usually not free to choose but determined by scaling arguments. Finally note that the admissible ranges for $\beta$ in the two theorems are almost complementary in this example: Indeed, since $D=Q=n$, we have $\beta \geq n(1/s - 1/q)$ in Theorem~\ref{GeneralGehering.cor} and $\beta \leq n(1/s-1/q)$ in Theorem~\ref{GeneralGehering.quasi}.}
\end{remark}

Another variant is to replace powers of the radius by powers of the volume {already in the assumption} and then no further hypothesis on the measure is required.

\begin{theorem}Let $(X,\rho,\mu)$ be a space of homogeneous type. {Let $s>0$, $\gamma \geq 0$ and $q>1$ be such that $s<q$ and $\gamma \le 1/s -1/q$.}
Suppose that $u, f, h \ge 0$ with $u^q, f^q, h^s \in L^1_{loc}(X,d\mu)$ and \eqref{eq:quasi1} holds with  $R^\beta(a_{h^s}(B))^{1/s}$ replaced by $\mu(B)^\gamma(a_{h^s}(B))^{1/s}$. 
Then there exists $p > q$ such that \eqref{eq:quasi2} holds with $R^\beta(\fint_{{N E^{2/\delta} B}}h^{ps/q} \, d\mu)^{q/sp}$ replaced with $\mu(B)^\gamma (\fint_{{N E^{2/\delta B} }} h^{s\sigma}\, d\mu)^{1/s\sigma}$, where  $s\sigma=\frac{p}{1+\gamma p}$. 
\end{theorem}

\begin{remark} Note that one can take $\gamma=0$ in which case $s\sigma=p$. {In accordance with Remark~\ref{rem:homers.4.3.thrm}} we note that the higher $\gamma$ the smaller the integrability needed on $h$.
\end{remark}

\begin{proof} {Once again it suffices to treat the metric case}. The modification to the proof of Theorem~\ref{GeneralGehering.cor} are the same as in the above argument, except for now using instead of $M^{\beta s}$  the modified fractional maximal operator 
$\tilde M^{\gamma s}$, $0\le \gamma<1$,  where
\[\tilde M^{\gamma s}v(x):= \sup_{B\ni x} \  (\mu(B))^{\gamma s} \fint_{B} |v|, \quad \ x\in X.   \]
It maps $L^\sigma(X)$ into $L^{\frac{\sigma}{1-\gamma s \sigma}}(X)$ when $1<\sigma$ and $ \gamma s \sigma < 1 $,  see Remark~2.4 in \cite{HKNT}. \end{proof}

\section{Global integrability}
\label{sec:global}

A typical application of Gehring's lemma is to prove higher integrability locally and globally. To extract a conclusion at the level of global spaces $L^{p}(X)$, we need some further hypotheses. We say that  the space of homogeneous type $(X,\rho,\mu)$ is \emph{$\phi$-regular} if it satisfies
\[ \phi(r)  \sim \mu( B(x,r) )\] 
for all $x \in X$ and $r  >0$, where $\phi: (0,\infty) \to (0,\infty )$ is a non-decreasing function with $\phi(r) > 0$ and $\phi(2r) \sim \phi(r)$ for $r > 0$. An important subclass of such spaces are the \emph{Ahlfors--David regular} metric spaces where $\phi(r) = r^{Q}$ for some $Q > 0$. The case of local and global different dimensions which occur on connected nilpotent Lie groups (see \cite{VSC}) is also covered with $\phi(r) \sim r^{d}$ for $r\le 1$ and $\phi(r) \sim r^{D}$ for $r\ge 1$.

\begin{theorem}
\label{thm:global1}
Let $(X,\rho,\mu)$ be a $\phi$-regular space of homogeneous type. In addition to assumptions of Theorem \ref{GeneralGehering.quasi}, suppose that $u^q, f^q, h^s \in L^1(X, d\mu)$. Then 
\[ \norm{u}_{L^{p}(X)} \lesssim  \norm{u}_{L^{q}(X)} + \norm{f}_{L^{p}(X)}+ \norm{h}_{L^{ps/q}(X)}   \]
with the implicit constant depending on  {$u, f, h$ only through the parameters quantified in the assumption.}
\end{theorem}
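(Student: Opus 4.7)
The plan is to cover $X$ by balls of a \emph{fixed} radius with bounded overlap, apply Theorem~\ref{GeneralGehering.quasi} on each, raise to the $p$-th power and sum the resulting local $L^p$ bounds to a global one. The $\phi$-regularity will be crucial: with a radius $R_0$ fixed, $\mu(B_i)$ is uniform in $i$, and a volume-packing argument gives that for each $k\geq 0$ the family of dilates $\{N^k B_i\}_i$ has finite overlap $M_k \lesssim \phi(N^k R_0)/\phi(R_0)$. This multiplicity will cancel the volume ratios $\mu(B_i)\mu(N^k B_i)^{-t}$ coming from averaging, up to a fixed power of $\phi(R_0)$.

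First I would fix $R_0 > 0$ (say $R_0 = 1$) and use a standard maximal packing argument in the doubling quasi-metric space $(X,\rho,\mu)$ to produce a countable family $\{B_i\}_{i\in I}$ of balls of radius $R_0$ with $X = \bigcup_i B_i$ and $\sum_i \mathbbm{1}_{B_i} \leq M$, where $M$ depends only on the structural constants. Applying Theorem~\ref{GeneralGehering.quasi} to each $B_i$, raising to the $p$-th power, multiplying by $\mu(B_i)$, summing over $i$, and using bounded overlap yields
\[
\|u\|_{L^p(X)}^p \leq M \sum_i \int_{B_i} u^p \, d\mu \lesssim \sum_{\bullet \in \{A,B,C,D,E\}} \sum_i \mu(B_i)(T_\bullet^i)^p,
\]
where $T_A^i, \ldots, T_E^i$ denote, in order, the five summands on the right-hand side of \eqref{eq:quasi2} for $B = B_i$. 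It then remains to bound each of the five sums by the desired global norm to the $p$-th power.

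For the \emph{localized} terms $T_D^i$ and $T_E^i$, the $p$-th power converts the average to an integral, and bounded overlap of the enlarged family $\{NE^{2/\delta}B_i\}_i$ yields $\sum_i \mu(B_i)(T_D^i)^p \lesssim \|f\|_{L^p}^p$; for $T_E^i$ the outer exponent becomes $q/s \geq 1$, so one first applies the superadditivity $\sum_i a_i^{q/s} \leq (\sum_i a_i)^{q/s}$, and together with $\phi$-regularity this gives $\sum_i \mu(B_i)(T_E^i)^p \lesssim R_0^{\beta p}\phi(R_0)^{1-q/s}\|h\|_{L^{ps/q}}^p$, the $R_0$-dependent factors being constants absorbed into the implicit bound. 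For each \emph{tail} term $T_A^i$, $T_B^i$, $T_C^i$, Jensen's inequality applied to the probability measure $\tilde\alpha_k/\tilde\alpha$ on $\mathbb{N}$ (with $\tilde\alpha := \sum_k \tilde\alpha_k < \infty$) brings the outer exponent $t \in \{p, p/q, p/s\}$ inside the tail sum; Hölder's inequality then converts $(\fint_{N^{k+1}B_i} v)^t$ into an average of $v^\sigma$ with $\sigma \in \{q, p, ps/q\}$ chosen so that $\|v\|_{L^\sigma}^{\sigma t}$ matches the global norm in the conclusion (using in particular $\fint h^s \leq (\fint h^{ps/q})^{q/p}$ for $T_C^i$). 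After the superadditivity $\sum_i a_i^t \leq (\sum_i a_i)^t$ in $i$, the $\phi$-regular identity $\mu(B_i)\, M_k^t\, \mu(N^{k+1}B_i)^{-t} \sim \phi(R_0)^{1-t}$ makes each $k$-summand bounded by a $k$-independent constant times the desired global norm raised to the $p$; the series $\sum_k \tilde\alpha_k$ converges by hypothesis.

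The main obstacle is the delicate book-keeping for the tails: one has to verify that the product $\mu(B_i)\mu(N^{k+1}B_i)^{-t} \cdot M_k^t$ balances to a $k$-independent constant so that both the $i$- and $k$-dependencies vanish after summation. This cancellation is precisely what forces the use of $\phi$-regularity (rather than mere doubling), and it is also the reason for working with balls of a \emph{fixed} radius $R_0$ instead of letting $R \to \infty$ directly in \eqref{eq:quasi2}: the latter approach would make the factors $R^\beta$ in $T_C^i$ and $T_E^i$ grow unboundedly against the localized averages of $h^s$ and $h^{ps/q}$.
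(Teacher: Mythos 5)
Your proposal follows essentially the same route as the paper's proof: cover $X$ by a family of balls of fixed radius with bounded overlap, apply Theorem~\ref{GeneralGehering.quasi} on each, and sum using $\phi$-regularity via the packing estimate $\sum_i\mathbbm{1}_{N^kB_i}\lesssim\phi(N^kR_0)/\phi(R_0)$, which is precisely \eqref{eq:2kcovering}. The book-keeping is organized slightly differently---the paper first absorbs the localized $f^p$ and $h^{ps/q}$ terms into the respective tails via H\"older and then invokes $\ell^1\subset\ell^{q/s}$ in the index $i$, whereas you apply Jensen to the tail sum in $k$ before superadditivity in $i$---but the key ideas and the cancellation of volume ratios against the overlap multiplicities are identical.
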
 

\begin{proof} {For the sake of simplicity let us assume $N=2$ in the statement of Theorem~\ref{GeneralGehering.quasi}. We shall see in Section~\ref{sec:dilation} below that upon changing the sequence $(\alpha_k)_k$ we can do so without loss of generality. Alternatively, we could also adapt the following argument to cover the general case.}

Take any $R > 0$ and choose a maximal $R$ separated set of points $\{x_i\}$, that is, $\rho(x_i, x_j) \ge R$ for all $i \neq j$ and for every $y \in X$ there exists $x_i$ such that $\rho(y,x_i) < R$. Since we assume that $X$ is doubling, such a collection necessarily has only finitely many members in any fixed ball, hence, it is countable. The balls $B_i := B(x_i,R)$ cover $X$, and there is $C$ only depending on $K$ and $C_d$ such that 
\[\sum_i \mathbbm{1}_{B_i}(x) \leq C \]
for every $x \in X$. Also the balls $(2K)^{-1}B_{i}$ are disjoint. Further, we have 
\begin{equation}
\label{eq:2kcovering}
\sum_i \mathbbm{1}_{2^kB_i}(x) \sim  \frac{\phi(2^kR)}{\phi(R)}
\end{equation} 
for every $x \in X$ and every integer $k\ge 1$. Indeed, fix $k\ge 1$ and $x\in X$. We can assume that {$2^{k-1}\ge K$,} since otherwise {we can just use that the left- and right-hand sides are comparable to constants depending only on $K$, $C_d$ and $\phi$}. Let $I_{x}$ be the set of $i$ giving a non zero contribution, and $N_{x}$ be the cardinal of $I_{x}$, that is, the value of the sum. Clearly, $N_{x}$  is not exceeding the number of $i$ for which $\rho(x,x_{i})\le 2^k R$. As the balls $(2K)^{-1}B_{i}$, $i\in I_{x}$, are disjoint and contained in $B(x, K(2^k+(2K)^{-1})R)$, we have 
$$
\phi(R/2K) N_{x} \lesssim \sum_{i\in I_{x}} \mu((2K)^{-1}B_{i}) \lesssim \mu(B(x, K(2^k+(2K)^{-1})R)) \lesssim \phi(K 2^{k+1}R).
$$
Also the balls $B_{i}$, $i\in I_{x}$, cover $B(x,(K^{-1}2^k-1) R)$, hence
\[
\begin{split}
\phi(R/2K) N_{x} \gtrsim &\sum_{i\in I_{x}} \mu((2K)^{-1}B_{i}) \gtrsim \sum_{i\in I_{x}} \mu(B_{i}) \gtrsim  \mu(B(x, (K^{-1}2^k-1) R)) \\
& \gtrsim \phi((K^{-1}2^k-1) R) {\geq \phi(K^{-1}2^{k-1}R)}
\end{split}
\]
{by the assumption on $k$}. The claim follows using the comparability $\phi(K2^{k+1}R)\sim {\phi(K^{-1} 2^{k-1}R)} \sim   \phi(2^k R)$ and $\phi(R/2K)\sim \phi(R)$. 

Applying Theorem \ref{GeneralGehering.quasi} in each $B_i$ and {denoting by $\tilde \alpha_k := \alpha_{\max(0,k-j_0-j_1)}$ the summable sequence appearing in the conclusion of that theorem},
 we see that
\begin{align*}
\phi(R)^{-1} \int_{X}  & u^{p} \, d \mu  \lesssim  \sum_{i} \fint_{B_i} u^{p} \, d \mu   \lesssim    \sum_i \Big(\tilde a_{u}(2B_{i})^p + \tilde a_{f^p}(2B_{i})+ R^\beta \tilde a_{h^{ps/q}}(2B_{i})^{q/s} \Big) \\		   
			& \qquad =: I + II + III,
\end{align*} 
where we used $\phi$-regularity, H\"older's inequality $\tilde a_{f^q}(2B_{i})^{1/q} \lesssim \tilde a_{f^p}(2B_{i})^{1/p}$ and absorbed the term with $f^p$ in \eqref{GGeq2cor.eq} in $\tilde a_{f^p}(2B_{i})$ and similarly for the terms with $h$. 
Let us treat $III$: {From the continuous embedding $\ell^1(\N) \subset \ell^{q/s}(\N)$ and \eqref{eq:2kcovering} we obtain}
\begin{align*}
\label{}
\sum_{i } \tilde a_{h^{ps/q}}(2B_{i})^{q/s} &  \le \sum_{i }\bigg(\sum_{k=0}^{\infty} \tilde \alpha_k \fint_{2^{k+1} B_{i}} h^{ps/q} \, d \mu\bigg)^{q/s}  
       \\
    &   \le \bigg(\sum_{i } \sum_{k=0}^{\infty} \tilde \alpha_k \fint_{2^{k+1} B_{i}} h^{ps/q} \, d \mu\bigg)^{q/s}  \\
    &  \le \bigg( \sum_{k=0}^{\infty} \tilde \alpha_k \sum_{i }  \fint_{2^{k+1} B_{i}} h^{ps/q} \, d \mu\bigg)^{q/s}
\\
& \lesssim  \bigg( \sum_{k=0}^{\infty} \tilde \alpha_k\   \phi(2^{k+1} R)^{-1} \int_X \sum_{i } \mathbbm{1}_{2^{k+1} B_{i}} h^{ps/q} \, d \mu\bigg)^{q/s} \\
& \lesssim \bigg(\tilde \alpha \phi(R)^{-1} \int_{X}h^{ps/q} \, d \mu\bigg)^{q/s},
\end{align*}
where $\tilde \alpha := \sum_k \tilde \alpha_k$. Doing the same for $I$ {and} $II$, implies 
 \begin{equation}
\label{eq:R}
\norm{u}_{L^{p}(X)} \lesssim  \phi(R)^{1/p-1/q}\norm{u}_{L^{q}(X)} + \norm{f}_{L^{p}(X)}+ R^\beta \phi(R)^{(1/p)-(q/sp)} \norm{h}_{L^{ps/q}(X)}. 
\end{equation} 
Note that since $R$ is fixed ($R=1$ for example), this concludes the proof.  
\end{proof}

\begin{remark} We note that the implicit constant in \eqref{eq:R} does not depend on $R$. 
 If $h=0$, then we may let $R\to \infty$ as $p>q$ and obtain $\norm{u}_{L^{p}(X)} \lesssim   \norm{f}_{L^{p}(X)}$. 
\end{remark}

We have  {a global analogue of Theorem~\ref{homers.4.3.thrm}, which might be of independent interest as it can be proved directly in the quasi-metric setting without recursing to Section~\ref{sec:quasi}.}

\begin{theorem}
\label{thm:global_volumelowerbound}
Let $(X,d,\mu)$ be a space of homogeneous type having volume  lower bound with exponent $Q$. {Let $s>0$, $\beta \geq 0$ and $q>1$ be such that $s<q$ and $\beta \le Q (1/s -1/q)$. Suppose that $u^q, f^q, h^s \in L^1(X, d\mu)$ and that \eqref{eq:quasi1} holds}. Then there exists  $p>q$ such that 
\[ \norm{u}_{L^{p}(X)} \lesssim    \norm{f}_{L^{p}(X)}+ \norm{h}_{L^{p_{*}}(X)},   \]
{where $p_* = \frac{pQ}{Q+\beta p}$. The implicit constant depends on $u, f, h$ only through the parameters quantified in the assumption.}
\end{theorem}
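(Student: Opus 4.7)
The plan is to apply Theorem~\ref{homers.4.3.thrm} directly on a single large ball $B=B(x_0,R)$ with $x_0\in X$ fixed and then to let $R\to\infty$; no covering of $X$ is needed, in contrast to the proof of Theorem~\ref{thm:global1}. The volume lower bound $\mu(B(x,r))\gtrsim r^Q$ forces $\mu(B)\to\infty$, and this is precisely what will allow the $\|u\|_{L^q(X)}$ term that appeared in Theorem~\ref{thm:global1} to be discarded. Throughout, set $c:=NE^{2/\delta}$.

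First, applying Theorem~\ref{homers.4.3.thrm} to $B$ produces some $p>q$ such that
\begin{align*}
\left(\fint_B u^p\,d\mu\right)^{1/p} &\lesssim \tilde a_u(NB)+(\tilde a_{f^q}(NB))^{1/q}+R^\beta(\tilde a_{h^s}(NB))^{1/s}\\
&\qquad +\left(\fint_{cB} f^p\,d\mu\right)^{1/p}+\mu(B)^{\beta/Q}\left(\fint_{cB} h^{p_*}\,d\mu\right)^{1/p_*}.
\end{align*}
We would then raise this to the $p$-th power, multiply through by $\mu(B)$, and treat the five resulting terms separately. The exponent identity $\tfrac{1}{p_*}=\tfrac{1}{p}+\tfrac{\beta}{Q}$ is crucial here: together with the doubling relation $\mu(B)\sim\mu(cB)$ it makes the $\mu(B)$-weights in the $h$-local term collapse, so that contribution reduces to $\lesssim\|h\|_{L^{p_*}(X)}^p$. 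The $f$-local term is handled even more easily and yields $\lesssim\|f\|_{L^p(X)}^p$.

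For the three tail terms, the idea is to bring global integrability into play via H\"older's inequality together with $\mu(N^{k+1}B)\ge\mu(B)$, which produces the bounds
\begin{align*}
\tilde a_u(NB)&\lesssim \mu(B)^{-1/q}\|u\|_{L^q(X)},\\
(\tilde a_{f^q}(NB))^{1/q}&\lesssim \mu(B)^{-1/p}\|f\|_{L^p(X)},\\
(\tilde a_{h^s}(NB))^{1/s}&\lesssim \mu(B)^{-1/p_*}\|h\|_{L^{p_*}(X)}.
\end{align*}
After multiplying by $\mu(B)^{1/p}$ and raising to the $p$-th power, and then absorbing the $R^{\beta p}$ factor via the estimate $R^{\beta p}\mu(B)^{-\beta p/Q}=O(1)$---which uses the volume lower bound $\mu(B)\gtrsim R^Q$ together with the identity $1-\tfrac{p}{p_*}=-\tfrac{\beta p}{Q}$---the three tail contributions to $\int_B u^p\,d\mu$ are bounded respectively by $\mu(B)^{1-p/q}\|u\|_{L^q(X)}^p$, $O(\|f\|_{L^p(X)}^p)$, and $O(\|h\|_{L^{p_*}(X)}^p)$.

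The conclusion then follows by letting $R\to\infty$: the $u$-tail prefactor $\mu(B)^{1-p/q}$ vanishes because $1-p/q<0$ and $\mu(B)\to\infty$, the remaining terms are uniformly bounded by $C(\|f\|_{L^p(X)}^p+\|h\|_{L^{p_*}(X)}^p)$, and monotone convergence converts $\int_B u^p\,d\mu$ into $\int_X u^p\,d\mu$; taking $p$-th roots yields the claim. The main obstacle is precisely this exponent balancing: the $h$-tail and $h$-local terms each carry $R^\beta$ or $\mu(B)^{\pm\beta/Q}$ factors that survive into the final estimate and cancel only because of the interplay between the identity $p_*=pQ/(Q+\beta p)$ and the volume lower bound. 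Without the latter, the $h$-tail would blow up with $R$ and the whole scheme would collapse.
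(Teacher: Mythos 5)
Your proof is correct and is a genuinely different derivation from the one in the paper. You obtain the global result by applying the local Theorem~\ref{homers.4.3.thrm} on a ball $B=B(x_0,R)$, re-weighting by $\mu(B)^{1/p}$, and letting $R\to\infty$. The bookkeeping is exactly right: H\"older plus $\mu(N^{k+1}B)\ge\mu(B)$ turns the tail of $u$ into $\tilde\alpha\,\mu(B)^{-1/q}\|u\|_{L^q(X)}$, whose contribution after weighting carries the prefactor $\mu(B)^{1-p/q}\to 0$ (by the volume lower bound and $p>q$); the identity $1/p_*=1/p+\beta/Q$ together with $\mu(B)\gtrsim R^Q$ cancels all $R^\beta$ and $\mu(B)^{\pm\beta/Q}$ factors in the $h$-terms; and the $f$-terms are absorbed directly. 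One small note: for the $h$-local and $f$-local terms you only need $\mu(B)\le\mu(NE^{2/\delta}B)$, so invoking the doubling comparison there is a bit more than necessary, but harmless. Also worth checking (you implicitly use it) is that $p_*>s$, which follows from $\beta\le Q(1/s-1/q)<Q(1/s-1/p)$; this is what makes the H\"older step on the $h$-tail legitimate.

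The paper instead proves this theorem directly, globalizing the stopping-time/layer-cake argument of Theorem~\ref{GeneralGehering.cor} over all of $X$: there is no cut-off radius $\rho_0-r_0$, no threshold $\lambda_0$, and the stopping-time functionals are built from the full tails $a_u,a_{f^q},a_{h^s}$; the global integrability assumptions serve to make the stopping radii finite. The explicit reason the authors give for this route is that it works \emph{directly} in the quasi-metric setting, with no appeal to the metric reduction of Section~\ref{sec:quasi}. Your route is shorter and more transparent, but it inherits the metric reduction implicitly through Theorem~\ref{homers.4.3.thrm}. Both are valid; yours buys brevity at the cost of structural independence from Section~\ref{sec:quasi}, which the authors seem to value as a feature of the direct proof.
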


{As for the choice of $\beta$, the same comments as in Remark~\ref{rem:homers.4.3.thrm} apply.}

\begin{proof} We indicate the modification to the argument of Theorem \ref{GeneralGehering.cor}, which, as said, works directly in the quasi-metric setting for this result. This is basically the one in \cite{ABES2017}. 

There is no need for the first and second steps and the proof begins as in Step~3 without the balls $B_{r_{0}}$ and $B_{\rho_{0}}$ and we have 
$$
\int_{X} u_m^{p-q}u^q \, d\mu  = (p-q)\int_{0}^m \lambda^{p -q - 1} u^q( \{u > \lambda\})\, d\lambda.
$$
There is also no need for a threshold $\lambda_{0}$ and we set  for $x\in X$ and $r>0$, 
\begin{align*}
U(x,r)	: =  a_{u}(B(x,r)), \quad
F(x,r)	: = (a_{f^q}(B(x,r))^{1/q}, \quad
H(x,r)	: = r^\beta (a_{h^s}(B(x,r)))^{1/s},
\end{align*}
and for $\lambda > 0$, we denote $U_\lambda :=  \{u > \lambda\}$. {Note that without loss of generality we may assume $\alpha_0 \geq 1$ right from the start as this only increases the right-hand side of our hypothesis \eqref{eq:quasi1}}. Thus,   
\[\liminf_{r \to 0} \big( U(x,r) + F(x,r) + H(x,r) \big) \geq u(x) \] 
for almost every $x$ because already {the first term in $a_{u}(B(x,r))$ tends to $\alpha_0 u(x) \geq u(x)$}. We define $\tilde U_{\lambda}$ as the subset of $U_{\lambda}$ where this holds. Note that 
\[\lim_{r \to \infty} \big( U(x,r) + F(x,r) + H(x,r) \big) = 0 \]
for all $x$  using the global assumptions on $u,f,h$. For the term with $h$, this follows from $H(x,r)^s \lesssim r^{\beta s -Q}  \int_{X} h^s \, d\mu$, provided $\beta s < Q$, which holds under our assumption.
For $x\in \tilde U_{\lambda}$, we can define the stopping time radius
\begin{equation*}
r_{x}:= \sup\{ r > 0:  U(x, r) + F(x,r) + H(x,r) > \lambda \}.
\end{equation*}
Remark that  $\sup_{x\in \tilde U_{\lambda}} r_{x} < \infty$. Indeed, at $r=r_{x}$, {$U(x,r) + F(x,r) + H(x,r) = \lambda$ and therefore} either $U(x, r)\ge \lambda/3$  or  $F(x,r) \ge \lambda/3$ or $H(x,r) \ge \lambda/3$. In the last case, we obtain $r^{Q-\beta s}(\lambda/3)^s \lesssim \int_{X} h^s \, d\mu <\infty$. The other cases also give us a bound on $r$.  By the Vitali covering lemma, there exists a countable collection of balls $\{B(x_i,r_{x_{i}})\} = \{B_i\}$ such that $\tfrac{1}{V}B_i$ are pairwise disjoint and $\tilde U_\lambda \subset \cup_i B_i$. (Usually, $V=5$ but our metric is only a quasi-metric in which case the Vitali covering lemma still holds but with a larger constant $V$ depending on $K$, and we apply it to the covering  $\tilde U_\lambda \subset \cup_{x \in \tilde U_\lambda} B(x, V^{-1} r_{x})$. {A direct way to see this is by the technique in Section~\ref{sec:quasi}}.)
Now, using the hypothesis for each $B_{i}$ and pairwise disjointness of the balls $\tfrac{1}{V}B_i$,
\begin{align*}
u^q(\tilde U_{\lambda})  &\le \sum_i  u^q({B_i}) 
		\le \sum_i \mu(B_i)\big(Aa_u(B_i) + (a_{f^q}(B_i))^{1/q} + r_i^\beta (a_{h^s}(B_i))^{1/s}\big)^q \\
		&= \sum_i  \mu(B_i)\lambda^{q} \lesssim V^D \sum_i  \mu(\tfrac 1 V B_i) \lambda^{q} \lesssim V^D  \mu \left( \cup_{i}  B_i \right) \lambda^{q},
\end{align*}
where $D$ is the homogeneous dimension.  The stopping time  implies
\begin{equation}
\label{uqbnd2corglobal.eq}
\cup_i  B_i \subset  \{ Mu \ge \lambda/3\}\cup  \{  M(f^q) \ge (\lambda/3)^q\}
 \cup \{ M^{\beta s}(h^s) \ge (\lambda/3)^s\}.
\end{equation}
From there  the estimates are as in Step~6 and we {use Lemma~\ref{lem:frac} for boundedness of $M^{\beta s}$}. We obtain
\begin{equation}\label{boundfortwocorglob.eq}
\int_{X} u_m^{p-q}u^q \, d\mu \le C(p-q) \int_{X}u_{m}^{p-1} u \, d \mu + C_p \int_{X} f^p\, d\mu +C_p \left(\int_{X} h^{p_{*}} \, d\mu\right)^{p/p_{*}},
\end{equation}
with $p_{*}$ as in the statement.  As $\int_{X}u_{m}^{p-1} u \, d \mu \le \int_{X}u_{m}^{p-q} u^q \, d \mu$ we can hide this term if  $p-q>0$ is small enough and then let $m\to \infty$.
\end{proof} 

\section{Self-improvement of the right-hand side}
\label{sec:sirhs}

We discuss here the change in the exponents in the tails on the right-hand side and subsequently the change of the dilation parameter $N$. Both induce change in the sequence $\alpha$. These remarks can be used to reduce some seemingly different properties to cases covered by Theorem \ref{GeneralGehering.quasi}.

\subsection{Exponent}
It is a direct consequence of the $\log$-convexity of the $L^{p}$ norms that if
\[\left(   \fint_{B} u^{p}\, d \mu \right)^{1/p} \lesssim \left( \fint_{B} u^{q} \, d \mu \right)^{1/q} \]
with $p > q$, then for every $s \in (0, q)$ we can write $1/q = \theta / p + (1- \theta) /s$ for some $\theta \in (0,1)$ and consequently
\[\norm{ u }_{L^{p}( B , \nu ) }  \lesssim \norm{ u }_{L^{q}( B, \nu  ) } \leq \norm{ u }_{L^{s }( B ,\nu  ) }^{(1- \theta)} \norm{ u }_{L^{p}( B, \nu ) }^{\theta}  \]
so that $\norm{ u }_{L^{p}( B , \nu) }  \lesssim \norm{ u }_{L^{s }( B ,\nu  ) }$. Here $\nu = d \mu / \mu(B)$. The same self-improving property holds true for the weak reverse H\"older inequality \cite{IN1985} and even for the reverse H\"older inequality with tails as we now show. To prove the claim for the inequality with tails, we use a modification of the argument from \cite{BCF2016}, Appendix B.

\begin{proposition}\label{prop:exponent}
Let $(X,\rho,\mu)$ be a space of homogeneous type. Let $  q  \in (0, p)$, $s_{0}, s_1,s_2 \in (0,q]$ with  $f^{s_1}, h^{s_2}\in L_{loc}^{1}(X)$ and set $\tau := {\min \big(\frac{s_{0}}{q}, \frac{s_1}{q}, \frac{s_2}{q} \big) }$. Let $(\alpha_k)_{k \geq 0}$ be a summable sequence {of strictly positive numbers. Let $N > 1$ and $\beta \geq 0$}. 
Let  $(\tilde\alpha_k)_{k \geq 0}, (\alpha_k^\sharp)_{k \geq 0}$ be summable sequences of non negative numbers {with $\tilde \alpha_0, \alpha_0^\sharp > 0$} and assume  
\begin{equation}
\label{eq:alpha_rhs}
 \sum_{k = 0}^{m} \tilde \alpha_k \alpha_{m-k}^\tau  \lesssim \tilde\alpha_m,  \qquad  \sum_{k = 0}^{m}  \alpha_k^\sharp \alpha_{m-k}  \lesssim \alpha_m^\sharp
\end{equation}
and 
\begin{equation}
\label{eq:alpha_rhs2}
 \sum_{k = 0}^{m} \tilde \alpha_k \alpha_{m-k}^{s_2/q} N^{(m-k) \beta s_2}  \lesssim \tilde\alpha_m. 
\end{equation}
Define $a_u(B), \tilde{a}_u(B), {a}^\sharp_u(B)$ as in \eqref{eq:aub} in terms of the  three respective sequences, for $u \geq 0$ locally integrable, $N > 1$ and $B$ a quasi-metric ball. 

Assume that  
\begin{equation}
\label{eq:assumption_rhs} 
\left(\fint_B u^p \, d\mu\right)^{1/p} \lesssim ( a_{u^{q}} (B))^{1 / q} + b(B),
\end{equation} 
where the implicit constant does not depend on $B$, with  
\begin{align*}
b(B)  = (a_{f^{s_1}}(B))^{1/s_1} + r(B)^\beta(a_{h^{s_2}}(B))^{1/s_2}.
\end{align*}
Then, for any ball $B$ for which ${a}^\sharp_{u^{q}}(B) < \infty$, one has
\begin{equation}
\label{eq:statement_rhs}
\left(\fint_B u^p \, d\mu\right)^{1/p} \lesssim ( \tilde { a}_{u^{s_{0}}} (B))^{1/s_{0}} + \tilde {b}(B),
\end{equation}
where $\tilde{b}$ is obtained by replacing $\alpha$ by $\tilde{\alpha}$ in the definition of $b$.
\end{proposition}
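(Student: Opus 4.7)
The approach I would take follows the template of \cite{BCF2016}, Appendix~B, based on log-convexity of $L^r$-norms and an iteration against the auxiliary sequences $\alpha^\sharp$ and $\tilde\alpha$. First, since $s_0 \le q \le p$, take $\theta \in (0,1]$ with $1/q = \theta/s_0 + (1-\theta)/p$; log-convexity of $L^r$-norms on every ball $B'$ gives
\[
\left(\fint_{B'} u^q \, d\mu\right)^{1/q} \le \left(\fint_{B'} u^{s_0}\, d\mu\right)^{\theta/s_0}\left(\fint_{B'} u^{p}\, d\mu\right)^{(1-\theta)/p}.
\]
Summing against $\alpha_k$ over the balls $N^k B'$ and applying H\"older's inequality with conjugate exponents $s_0/(\theta q)$ and $p/((1-\theta)q)$ (they sum to $1$ by the very definition of $\theta$) transfers the bound to the tails, namely $(a_{u^q}(B'))^{1/q} \le (a_{u^{s_0}}(B'))^{\theta/s_0}(a_{u^p}(B'))^{(1-\theta)/p}$. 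Combining with the hypothesis \eqref{eq:assumption_rhs} and splitting the product by Young's inequality with a parameter $\delta > 0$ yields the key pointwise estimate
\[
\left(\fint_{B'} u^p\, d\mu\right)^{1/p} \le C_\delta (a_{u^{s_0}}(B'))^{1/s_0} + C\delta\, (a_{u^p}(B'))^{1/p} + C b(B'), \quad B' \subset X.
\]

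Next comes an iteration-and-absorption step on the scales $N^k B$. Raising this estimate to the $p$-th power, applying it at $B' = N^k B$, multiplying by $\alpha^\sharp_k$, and summing over $k$ yields
\[
a^\sharp_{u^p}(B) \le C_\delta^p \sum_{k\ge 0}\alpha^\sharp_k (a_{u^{s_0}}(N^k B))^{p/s_0} + C\delta^p \sum_{k\ge 0}\alpha^\sharp_k\, a_{u^p}(N^k B) + C \sum_{k\ge 0}\alpha^\sharp_k\, b(N^k B)^p.
\]
The second convolution identity in \eqref{eq:alpha_rhs} collapses $\sum_k \alpha^\sharp_k a_{u^p}(N^k B)$ to $\lesssim a^\sharp_{u^p}(B)$. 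Running the argument with the truncation $u_m := \min(u,m)$ (for which $a^\sharp_{u_m^p}(B) \le m^{p-q}\, a^\sharp_{u^q}(B) < \infty$ by the hypothesis), choosing $\delta$ so small that the constant multiplying $a^\sharp_{u_m^p}(B)$ is at most $1/2$, and absorbing gives
\[
a^\sharp_{u_m^p}(B) \le C \sum_{k\ge 0}\alpha^\sharp_k (a_{u_m^{s_0}}(N^k B))^{p/s_0} + C\sum_{k\ge 0}\alpha^\sharp_k\, b(N^k B)^p,
\]
and monotone convergence as $m\to\infty$ promotes the inequality back to $u$ itself.

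The final step converts the outer sequence $\alpha^\sharp$ together with the inner powers of $\alpha$ on the right into $\tilde\alpha$. For the $u$-piece, expanding $(a_{u^{s_0}}(N^k B))^{p/s_0}$ via Jensen's inequality (convex $t\mapsto t^{p/s_0}$) brings a single $\alpha$ inside, and the identity $\alpha^\sharp \ast \alpha \lesssim \alpha^\sharp$ collapses the resulting convolution; subsequent use of the sub-additivity $(\sum_j x_j)^\tau \le \sum_j x_j^\tau$ with $\tau = s_0/q \le 1$ lowers the exponent on the remaining averages, and the first identity in \eqref{eq:alpha_rhs} rewrites the resulting convolution as a single sum against $\tilde\alpha$. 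The $f$-piece of $b(N^kB)^p$ is treated analogously with $\tau = s_1/q$, and the $h$-piece additionally invokes \eqref{eq:alpha_rhs2} to absorb the geometric factor $N^{(m-k)\beta s_2}$ coming from the radius power. Assembling the estimates and using $\alpha^\sharp_0 > 0$ so that $\fint_B u^p \le (\alpha^\sharp_0)^{-1} a^\sharp_{u^p}(B)$, one takes the $p$-th root and recovers \eqref{eq:statement_rhs}.

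The main technical obstacle is this last conversion: the exponent $p/s_0 \ge 1$ forbids sub-additivity, while Jensen applied to $t\mapsto t^{p/s_0}$ tends to push the inner averages to $(\fint u^{s_0})^{p/s_0} \le \fint u^p$, threatening to reintroduce the left-hand side. The specific value $\tau = \min(s_0/q, s_1/q, s_2/q)\le 1$ postulated in \eqref{eq:alpha_rhs} is precisely what is needed to untangle this competition and close the estimate without circularity. The auxiliary sequence $\alpha^\sharp$ itself carries no geometric content beyond providing a summable weight against which $\alpha$-convolutions remain bounded, a role that is essential both for the absorption step and for propagating the a~priori finiteness assumption $a^\sharp_{u^q}(B)<\infty$ through the truncation.
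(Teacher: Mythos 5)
Your overall strategy (log-convexity of $L^r$-norms transferred to tails, then a Young-splitting followed by an absorption against the auxiliary sequences) is in the spirit of Appendix~B of \cite{BCF2016} and the paper, but the specific absorption scheme you run does not close. The decisive obstacle is the final conversion step. After Jensen and the collapse via $\alpha^{\sharp}\ast\alpha\lesssim\alpha^{\sharp}$ you arrive at
\[
\sum_{k\ge0}\alpha^{\sharp}_k\,(a_{u^{s_0}}(N^kB))^{p/s_0}\ \lesssim\ \sum_{m\ge0}\alpha^{\sharp}_m\Big(\fint_{N^mB}u^{s_0}\Big)^{p/s_0},
\]
and you need to bound the right side by $(\tilde a_{u^{s_0}}(B))^{p/s_0}$. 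But the hypotheses \eqref{eq:alpha_rhs}--\eqref{eq:alpha_rhs2} relate $\tilde\alpha$ to $\alpha$ and $\alpha^{\sharp}$ to $\alpha$ separately; they give \emph{no} comparison between $\alpha^{\sharp}_m$ and $\tilde\alpha_m$ (or $\tilde\alpha_m^{p/s_0}$), so the $\ell^1\subset\ell^{p/s_0}$ embedding does not reach the target. The ``sub-additivity with $\tau=s_0/q$'' you invoke never actually applies here: after Jensen the exponent on the inner averages is $p/s_0\ge1$, not $q/s_0$, and if you push Jensen further to $(\fint u^{s_0})^{p/s_0}\le\fint u^p$ you simply recreate $a^{\sharp}_{u^p}(B)$ --- the circularity you yourself flag, without a mechanism to escape it. In effect, your scheme destroys the $q$-level structure on which \eqref{eq:alpha_rhs} is calibrated by raising everything to the $p$-th power before summing against $\alpha^{\sharp}$.

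A secondary but real issue is the absorption itself: to absorb $C\delta^p a^{\sharp}_{u_m^p}(B)$ you need both finiteness of $a^{\sharp}_{u_m^p}(B)$ \emph{and} the pointwise key estimate to hold for the truncation $u_m$ with constants uniform in $m$. The latter fails: the key estimate is deduced from the hypothesis \eqref{eq:assumption_rhs}, which is only assumed for $u$, and truncating on the left while keeping $u$ on the right leaves you with $a_{u^p}$, not $a_{u_m^p}$, in the term you wish to absorb. In the absence of an a~priori bound on $a^{\sharp}_{u^p}(B)$ the absorption is not justified.

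The paper sidesteps both difficulties by bootstrapping the ratio
\[
K(\delta,s_0)=\sup_B\frac{(a_{u^q}(B))^{1/q}}{(\tilde a_{u^{s_0}}(B))^{1/s_0}+\tilde b(B)+\delta(a^{\sharp}_{u^q}(B))^{1/q}},
\]
which is finite a priori (indeed $\lesssim\delta^{-1}$) because $\alpha_m\lesssim\alpha^{\sharp}_m$; no truncation is needed. Crucially, that argument stays at the $q$-level: it sums the log-convex bound for $(\fint_{N^kB}u^q)^{1/q}$ against $\alpha_k$, uses the continuous embedding $\ell^1(\N)\subset\ell^{q/s_0}(\N)$ (not $\ell^{p/s_0}$), and then the first convolution identity in \eqref{eq:alpha_rhs} with exponent $\tau=s_0/q$ collapses $\sum_{k+j=m}\alpha_k^{s_0/q}\tilde\alpha_j\lesssim\tilde\alpha_m$. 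This yields $K(\delta,s_0)\lesssim K(\delta,s_0)^{1-\theta}$ and hence $K(\delta,s_0)\lesssim1$, after which one sends $\delta\to0$. If you want to repair your proof, the lesson is to keep the scheme at the $q$-level and run the bootstrap through a normalized ratio rather than an $L^p$-absorption.
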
 

\begin{remark}\label{withgeopropexp.rmk}
Note that, in contrast with the improvement of integrability, we do not need the non-increasing assumption on the sequence $\alpha$ for this proposition. The condition \eqref{eq:alpha_rhs} together with $\alpha_0^\sharp > 0$ implies $\alpha_{m}\lesssim \alpha_{m}^\sharp$ and similarly, since $\tau \leq 1$, we have $\alpha_m \lesssim \tilde \alpha_m$. {Hence we have to assume more than  $a_{u^{q}} (B)<\infty$. 
For example, with $\tau$ as above, if $\alpha_{k}= N^{-\gamma k}$ for $ \gamma>0$, then $\tilde \alpha_{k}= N^{-\gamma' \tau k}$ and $\alpha_{k}^\sharp=N^{-\gamma' k}$ work in the theorem for any $0<\gamma'<\gamma$ such that $\beta s_2 < \gamma s_2/q - \gamma' \tau$. In particular, decay $\gamma > \beta q$ is needed to obtain any improvement, which typically is hard to obtain in applications. On the other hand, if $\beta = 0$, we can improve the right-hand exponent by only paying an arbitrarily small amount of decay to replace $\gamma$ by $\gamma' < \gamma$.}  {The condition \eqref{eq:alpha_rhs2} takes into account the presence of $r(B)^\beta$ in \eqref{eq:assumption_rhs}.} 
Finally, the strict positivity of $\alpha_{k}$ rules out in particular the case where the $\alpha_{k}$ form a finite sequence, but in that case, the  argument in  \cite{BCF2016} already covers the situation. 
\end{remark}

\begin{proof}
Define 
\[ K (\delta, s_{0}) := \sup \frac{ ( a_{u^{q}} (B))^{1 / q} }{  (\tilde{ a }_{u^{s_{0}}}(B))^{1 / s_{0}} +  \tilde{b}(B) + \delta ( {a}^\sharp_{u^{q}}(B))^{1 / q} }, \]
where the supremum is taken on the set of balls $B$ such that  the denominator is finite.    Indeed, there is nothing to prove if the right-hand side of \eqref{eq:statement_rhs} is infinite, which is equivalent to the denominator being infinite since   we assume ${a}^\sharp_{u^{q}}(B) < \infty$.  As   $\alpha_{m} \lesssim \alpha_{m}^\sharp $, we have ${a}_{u^{q}}(B)\lesssim {a}^\sharp_{u^{q}}(B)$ and  the presence of $\delta>0$ guarantees that $K (\delta, s_{0})\lesssim \delta^{-1}$. We show a uniform bound in terms of $\delta$. {To this end we can of course assume $K(\delta,s_0) \geq 1$ since otherwise there is nothing to prove.}

Fix a ball $B$ with the above restriction.  Let $\theta \in (0,1)$ be such that 
\[\frac{1}{q} = \frac{\theta}{s_{0}} + \frac{1-\theta}{p}.\]
We see that
\[ \left( \fint_{B} u^{q} \, d \mu  \right)^{1/q} \leq \left( \fint_{B} u^{s_{0}}  \, d \mu  \right)^{\theta /s_{0}} \left( \fint_{B} u^{p}  \, d \mu  \right)^{(1-\theta)/p} .\]
Using \eqref{eq:assumption_rhs}, $b(B)\lesssim \tilde b(B)$ and $K(\delta,s_{0})\ge 1$, 
\begin{align*}
 &\left( \fint_{B} u^{q}  \, d \mu  \right)^{1/q} \lesssim \left( \fint_{B} u^{s_{0}} \, d \mu  \right)^{\theta /s_{0}} \left( ( a_{u^{q}} (B))^{1 / q} + {b}(B) \right)^{1- \theta} \\
 	& \lesssim \left( \fint_{B} u^{s_{0}}  \, d \mu \right)^{\theta /s_{0}} K(\delta, s_{0})^{1-\theta}  \left( (\tilde{ a }_{u^{s_{0}}}(B))^{1 / s_{0}} +  \tilde{b}(B) + \delta ( a_{u^{q}}^\sharp(B))^{1 / q} \right)^{1-\theta} \\
 	&\lesssim K(\delta, s_{0}) ^{1-\theta} \left( (\tilde{ a }_{u^{s_{0}}}(B))^{1 / s_{0}} +  \tilde{b}(B) + \delta ( a_{u^{q}}^\sharp(B))^{1 / q} \right) .
\end{align*}
We apply this last inequality to $N^kB$. This is possible provided $N^kB$ belongs to the same set of balls and this follows from the assumption on the sequences: For example,  using \eqref{eq:alpha_rhs}, 
$$a^\sharp_{u}(N^kB)= \sum_{j=k}^{\infty} \alpha_{j-k}^\sharp   \fint_{N^{j} B} u^{q}  \ d \mu
\lesssim \alpha_k^{-1}  \sum_{j=k}^{\infty} \alpha_{j}^\sharp   \fint_{N^{j} B} u^{q}  \ d \mu \le \alpha_k^{-1} a^\sharp_{u^q}(B)<\infty.$$ 
Similar calculations can be done for the other terms.  Thus, 
\begin{align*}
a_{u^{q}} (B)^{1/q}
=   \left(\sum_{k = 0}^{\infty} \alpha_k \fint_{N^{k }B } u^{q} \, d\mu \right)^{1/q}
&\lesssim K(\delta,s_{0}) ^{(1-\theta)}     \left( \sum_{k=0}^{\infty} \alpha_k  \tilde{ a }_{u^{s_{0}}}(N^{k} B)^{q/s_0} \right)^{1/q} \\
& \quad + K(\delta, s_{0}) ^{(1-\theta)} \left( \sum_{k=0}^{\infty} \alpha_k  \tilde b(N^{k}B )^q \right)^{1/q} \\
& \quad + K(\delta, s_{0}) ^{(1-\theta)} \delta \left( \sum_{k=0}^{\infty} \alpha_k {a}^\sharp_{u^{q}}(N^{k}B)  \right)^{1/q} \\\
&=: I + II + III.
\end{align*}
Each of the three sums is estimated similarly so we restrict our attention to the first one. Using {the continuous embedding $\ell^1(\N) \subset \ell^{q/s_0}(\N) $} and the properties of $\alpha$ in  \eqref{eq:alpha_rhs}, we compute 
\begin{align*}
I^{s_0} \leq \sum_{k=0}^{\infty} \alpha_k ^{s_{0} / q} \tilde{ a }_{u^{s_{0}}}(N^{k} B) 
& = \sum_{m=0}^{\infty}  \left(\sum_{ k = 0} ^{m} \alpha_k  ^{s_{0} / q} \tilde{\alpha}_{m-k} \right) \fint_{N^{m} B} u^{s_{0}}  \ d \mu \\
& \lesssim \sum_{m=0}^{\infty}   \tilde{\alpha} _{m}  \fint_{N^{m} B} u^{s_{0}}  \ d \mu 
= \tilde{ a }_{u^{s_{0}}}(  B).
\end{align*}
The same kind of argument applies to the remaining two terms so that
\begin{equation}
\label{eq:K}
a_{u^{q}} (B)^{1/q} \lesssim  K (\delta, s_{0})^{1-\theta}  \big( (\tilde{ a }_{u^{s_{0}}}(  B))^{1/s_{0}} + \tilde{ b }(  B) + \delta ( {a}^\sharp_{u^{q}} (B) )^{1/q} \big).
\end{equation}
{We remark that it is the part of $\tilde b$ involving $r(B)^\beta$ that requires us to use the strong condition \eqref{eq:alpha_rhs2}.}
 As   the right-hand side is finite, we readily obtain $K (\delta, s_{0}) \lesssim K (\delta, s_{0})^{1-\theta}$, therefore $K (\delta, s_{0}) \lesssim 1$. Now, all the bounds are independent of $\delta$, so we may send $\delta \to 0$ in \eqref{eq:K}. Plugging this inequality into \eqref{eq:assumption_rhs} concludes the proof of \eqref{eq:statement_rhs}.
\end{proof}

\subsection{Dilation}
\label{sec:dilation}

Another direction to which the reverse H\"older inequalities self-improve is the dilation parameter on the right-hand side of 
\[\left(   \fint_{B} u^{p}\, d \mu \right)^{1/p} \lesssim \left( \fint_{NB} u^{q} \, d \mu \right)^{1/q} . \]
Indeed, if such an inequality holds in a space of homogeneous type, then the similar inequality
\[\left(   \fint_{B} u^{p}\, d \mu \right)^{1/p} \lesssim \left( \fint_{CB} u^{q} \, d \mu \right)^{1/q} \]
holds for all balls with any $C  > K$ where $K$ is the quasi-metric constant. See for instance Theorem 3.15 in \cite{AHT2017}. The proof of this fact is based on a covering of $B$ by small balls whose $N$-dilates are still contained in $C B$ and applying the weak reverse H\"older inequality in each small ball individually. 

It is worth a remark that a change of geometry similar to the property just described can be carried out with the reverse H\"older inequality with tails. To formulate this technical remark, we introduce some notation. Given a sequence of positive numbers $\alpha = (\alpha_k)_{k\ge0}$ and  real numbers with $1<n \le m$,  we define the $(m,n)$-\emph{stretch} $S^{m,n} \alpha $  by 
\[(S^{m,n} \alpha)_j := \alpha_{k },  \quad   j \ge 0 :   \ m^{k-1} <   n^{j} \leq m^{k} .  \] 
We also define the $(m,n)$-\emph{regrouping} by 
\[ (R^{m,n} \alpha)_k := \sum_{j: m^{k-1} < n^{j} \leq m^{k} } \alpha_j\  + \beta_k , \quad k\ge 0, \]
where $\beta_k \ $ is a correction term. It makes each $(R^{m,n} \alpha)_k$ to be the sum of equally many terms and hence the regrouping of a non-increasing sequence remains non-increasing: The intervals
\[\left( (k-1) \frac{\ln m}{\ln n} , k \frac{\ln m}{\ln n}  \right] \]
contain $\ell$ or $\ell + 1$ integers when $\ell$ is the integer such that
\[ \ell <  \frac{\ln m}{\ln n} \leq \ell + 1.	 \] 
We set $\beta_k = 0$ if $\sum_{j: m^{k-1} <   n^{j} \leq m^{k} } 1 = \ell + 1$ and $\beta_k = \alpha_{\min\{{j}: n^{j} > m^{k-1} \} }$ otherwise.

For example, for $\gamma>0$,  the $(m,n)$-stretch of $(m^{-\gamma k})$ is (term-wise) comparable to $(n^{-\gamma k})$, and the $(m,n)$-{regrouping} of $(n^{-\gamma k})$ is (term-wise) comparable to  $(m^{-\gamma k})$. More generally, if $\alpha$ is summable, so are its stretch and regrouping. For the latter, it is obvious and for the former, is follows from bounding  the number of  possible repetitions by   $1+\frac{\ln m}{\ln n}$. In addition, if $\alpha$ is non-increasing so are its $(m,n)$-stretch $(m,n)$-{regrouping}.
  
\begin{proposition}\label{prop:dilation}
Let $(X,\rho,\mu)$ be a space of homogeneous type and let $(\alpha_k)_{k \geq 0}$ be a summable sequence of positive numbers. For $u \in L^{1}_{loc}(X)$, $u\ge 0$,  $N>1$, define
\[a_u(B) := \sum_{k=0}^{\infty} \alpha_k \fint_{N^{k} B} u \, d \mu. \]
Then for any $M > 1$, one has 
\[a_u(B) \lesssim \sum_{k=0}^{\infty} \beta_k \fint_{M^{k} B} u \, d \mu,\]
with, if $M > N$, $\beta = R^{M,N} \alpha $ and if  $M <  N$,  $\beta = S^{M^\ell,M} R^{M^{\ell},N}\alpha$ where $\ell$ is the least integer to satisfy $\ell \geq \ln N / \ln M$. 
\end{proposition}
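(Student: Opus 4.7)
The strategy is to compare the family of radii $\{N^k r(B)\}_{k \ge 0}$ with the family $\{M^k r(B)\}_{k \ge 0}$ using doubling, and then read off what combinatorial rearrangement of the coefficients $\alpha$ is needed. I treat the two cases separately.

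\textbf{Case $M > N$.} For each $j \ge 0$ there is a unique $k = k(j) \ge 0$ such that $M^{k-1} < N^j \le M^k$; in particular $N^j B \subseteq M^k B$ and $\mu(M^k B) \le C_d \mu(M^{k-1}B) \le C_d \mu(N^j B)$ by the doubling property, where $C_d$ depends only on $M$ and the doubling constant. Therefore
\[
\fint_{N^j B} u \, d\mu \le C_d \fint_{M^{k(j)} B} u \, d\mu.
\]
Plugging this into the definition of $a_u(B)$ and grouping the indices $j$ according to the value of $k(j)$ yields
\[
a_u(B) \le C_d \sum_{k=0}^{\infty} \Bigl(\sum_{j:\, M^{k-1} < N^j \le M^k} \alpha_j\Bigr) \fint_{M^k B} u \, d\mu,
\]
which is exactly the inequality with weights $R^{M,N}\alpha$ (the correction terms $\beta_k \ge 0$ can only enlarge the right-hand side, so the stated form still follows).

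\textbf{Case $M < N$.} By choice of $\ell$ we have $M^\ell \ge N$, so Case~1 applies with $M$ replaced by $M^\ell$. This gives
\[
a_u(B) \lesssim \sum_{k=0}^{\infty} \gamma_k \fint_{M^{\ell k} B} u \, d\mu,
\qquad \gamma := R^{M^\ell,N}\alpha.
\]
It remains to pass from the subsequence of radii $M^{\ell k}$ to all radii $M^j$. For this I use the trivial lower bound: since $u \ge 0$, for every $k \ge 1$,
\[
\fint_{M^{\ell k} B} u \, d\mu \le \sum_{j = \ell(k-1)+1}^{\ell k} \fint_{M^j B} u \, d\mu,
\]
and similarly $\fint_{B} u \,d\mu$ is (trivially) one term of a sum over $j = 0$. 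Summing over $k$ with weight $\gamma_k$ regroups each $\gamma_k$ as the common coefficient of the $\ell$ averages with $\ell(k-1) < j \le \ell k$; by the very definition of the $(M^\ell,M)$-stretch, this is precisely the sequence $S^{M^\ell,M}\gamma = S^{M^\ell,M} R^{M^\ell,N}\alpha$. Combining the two displays concludes Case~2.

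\textbf{Expected difficulty.} The analytic content is just one application of doubling per $j$; the only real work is bookkeeping. The main obstacle is to verify that the index rearrangements coming from doubling line up exactly with the combinatorial definitions of $R^{m,n}$ and $S^{m,n}$, in particular that the correction $\beta_k$ in the regrouping (needed only to keep the output sequence non-increasing) does not enter the estimate as anything more than a harmless non-negative addition, and that the boundary cases $j=0$ or $k=0$ are handled consistently with the strict/non-strict inequalities defining $R^{m,n}$ and $S^{m,n}$.
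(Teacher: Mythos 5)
Your proof is correct and follows essentially the same route as the paper: one application of doubling per index to pass from $N$-dilates to the nearest $M$-dilate (giving the regrouping $R^{M,N}\alpha$), followed in the case $M<N$ by applying that same step with $M^\ell$ in place of $M$ and then stretching to recover all $M$-dilates. The only cosmetic difference is that where the paper observes $\sum_k\gamma_k\fint_{M^{\ell k}B}u$ is already a subsum of $\sum_j(S^{M^\ell,M}\gamma)_j\fint_{M^jB}u$, you instead bound each $\fint_{M^{\ell k}B}u$ by the block sum $\sum_{\ell(k-1)<j\le\ell k}\fint_{M^jB}u$; both are valid and give the same conclusion.
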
 

\begin{proof}
We start with $M > N$.  Then, 
\begin{align*}
 \sum_{k=0}^{\infty} \alpha_k \fint_{N^{k} B} u \, d \mu  
 	&= \sum_{k=0}^{\infty} \sum_{j: M^{k-1} <   N^{j} \leq M^{k} }  \alpha_j \fint_{N^{j} B} u \, d \mu  
 	\lesssim \sum_{k=0}^{\infty} \sum_{j: M^{k-1} <   N^{j} \leq M^{k} }  \alpha_j \fint_{M^{k} B} u \, d \mu  \\
 	&\le \sum_{k=0}^{\infty} (R^{M,N} \alpha)_k \fint_{M^{k} B} u \, d \mu 
\end{align*}
as claimed, using the doubling condition. 

Let now $M < N$. Assume first that there is an integer $\ell\ge 2$ such that $M^{\ell} = N$. Then we can write 
\begin{align*}
\sum_{k=0}^{\infty} \alpha_k \fint_{N^{k} B} u \, d \mu 
 	= \sum_{k=0}^{\infty} \alpha_k \fint_{M^{\ell k} B} u \, d \mu  
 	\leq \sum_{j=0}^{\infty} (S^{M^\ell, M}\alpha)_{j} \fint_{M^{j} B} u \, d \mu. 
\end{align*}
In general, we can find an integer $\ell \ge 2$ so that $M^{\ell-1} < N \le  M^{\ell} $ so that by the previous case ($M> N$) 
\[\sum_{k=0}^{\infty} \alpha_k \fint_{N^{k} B} u \, d \mu \lesssim \sum_{k=0}^{\infty}  (R^{M^{\ell},N}\alpha)_k \fint_{M^{\ell k} B} u \, d \mu \le \sum_{k=0}^{\infty}  (S^{M^\ell, M} R^{M^{\ell},N}\alpha)_j \fint_{M^{j} B} u \, d \mu .\] 
\end{proof}

\section{Extensions}
\label{sec:extensions}

There are several ways to further generalize the Gehring lemma with tails that follow by the argument used in the proof of Theorem \ref{GeneralGehering.cor}. For the sake of clear exposition, we have not included them in the main theorem, but we briefly discuss some of them in this separate section. For simplicity we work in metric spaces. We leave the adaptations to the quasi-metric setting for the interested reader. They will not be needed in the further course.

\subsection{Sequences}
\label{sec:sequences}
We usually asked the sequence $\alpha_k$ in the definition of $a_u$ to be non-increasing. Of course, this assumption can always be relaxed by asking the sequence to be non-increasing starting from a certain index $k_0$ and then replacing the terms $\alpha_k$ with {$0 \leq k \leq k_0$ with $\alpha_k' := \max_{0 \leq k \leq k_0} \alpha_k$. The resulting sequence with $\alpha_k' := \alpha_k $ for $k > k_0$} is always non-increasing and summable. 

\subsection{Maximal function}\label{sec:maximal}
The functional $a_u$ can also take the form
\[m_u^{\Omega,loc}(B(x,t)) = \sup _{ r  \in [t , (1/2) {\rm dist} \,(x, \Omega^{c}))} \fint_{B(x,r)} u \, d \mu  \]
where $\Omega \subset X$ is an open set. In other words, the supremum is over ``large" balls $B$ so that $2B \subset \Omega$. We also define  
\[m_u^{\Omega}(B(x,t)) = \sup _{ r  \in [t ,(3/4) {\rm dist} \,(x, \Omega^{c}))} \fint_{B(x,r)} u \, d \mu. \]

\begin{corollary}
\label{cor:extsec2}
Let $\Omega \subset X$ be an open set in a metric space $(X,d,\mu)$ {with doubling measure. Let $s, \beta >0$ and $q>1$ be such that $s<q$ and $\beta \geq D(1/s-1/q)$ where $D$ is any number satisfying \eqref{eq:homdim}}. Suppose that $u, f, h \ge 0$ with $u^q, f^q, h^s \in L^1_{loc}(\Omega, d\mu)$ and $A\ge 0$ is a constant such that for every ball $B = B(x, R)$ with $2 B \subset \Omega$ 
\begin{equation} 
\label{hyp:extsec2}
\left(\fint_B u^q \, d\mu\right)^{1/q} \le A m_u^{\Omega,loc}(B) + (m_{f^{q}}^{\Omega,loc} (B))^{1/q} + R^\beta(m_{h^{s}}^{\Omega,loc} (B))^{1/s}.
\end{equation} 
Then there exists $p > q$ such that for all balls $B$ with $12 B \subset \Omega$, 
\begin{equation} 
\begin{split}
\label{concl:extsec2}
\left(\fint_B u^p \, d\mu\right)^{1/p} &\lesssim m_u^{\Omega}   (  B) + (m_{f^{q}}^{\Omega} ( B))^{1/q} + R^\beta(m_{h^{s}}^{\Omega}  ( B))^{1/s}
\\& \qquad+ \left(\fint_{2B}f^p \, d\mu\right)^{1/p} + R^\beta\left(\fint_{2B}h^{ps/q} \, d\mu\right)^{q/sp},
\end{split}
\end{equation}
{where both $p$ and the implicit constant depend on $A, D, s, q, \beta$}.
\end{corollary}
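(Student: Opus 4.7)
The plan is to follow the proof of Theorem~\ref{GeneralGehering.cor} essentially verbatim, with the weighted averages $a_u(B)$ replaced throughout by the maximal functionals $m_u^{\Omega,\mathrm{loc}}(B)$ and $m_u^\Omega(B)$. In the course of this substitution, the role of the geometric decay of the sequence $(\alpha_k)_k$ is played instead by the restriction of the supremum to a finite range of radii, and the role of the assumption $12B\subset\Omega$ (instead of $2B\subset\Omega$) is to leave enough room for the various doubling and admissibility arguments to go through without shrinking any ball.

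For Step~1 I would introduce $g^q := A_R^q h^s\mathbbm{1}_{2B}$ exactly as before, with $A_R$ chosen so that $r(B_r)^\beta(\fint_{B_r}h^s)^{1/s}\le (\fint_{B_r}g^q)^{1/q}$ whenever $B_r\subset 2B$ and $(\fint_{2B}g^q)^{1/q}\lesssim R^\beta(\fint_{2B}h^s)^{1/s}$. For Step~2 I would fix $R\le r_0<\rho_0\le \tfrac{3}{2}R$ (instead of $r_0,\rho_0\in[R,NR]$); the point is that since $12B\subset\Omega$, every $x\in B_{\rho_0}$ satisfies $\mathrm{dist}(x,\Omega^c)\ge \tfrac{21}{2}R$, which guarantees (i) that any ball $B(x,r)$ with $r\le \rho_0-r_0$ satisfies $2B(x,r)\subset\Omega$ (so the hypothesis \eqref{hyp:extsec2} applies at such scales) and (ii) that any ball $B(x_0,r')$ with $r'\le\rho_0$ is admissible in the definition of $m_u^\Omega(B)$. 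A short doubling argument of the type \eqref{doubavgineqcor.eq} then shows that $\fint_{B(x,\rho_0-r_0)}v\lesssim \fint_{B(x_0,\rho_0)}v\le m_v^\Omega(B)$ for any $v\ge 0$, which is the replacement for \eqref{doubavgineqcor.eq} in this setting.

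Steps~3 and 4 carry over unchanged: I would perform the same layer cake decomposition and pick
\[\lambda_0 := C\bigl(m_u^\Omega(B)+(m_{f^q}^\Omega(B))^{1/q}+R^\beta(m_{h^s}^\Omega(B))^{1/s}+(NR)^\beta\bigl(C_1\fint_{2B}h^s\,d\mu\bigr)^{1/s}\bigr),\]
large enough so that $U(x,\rho_0-r_0)+F(x,\rho_0-r_0)+G(x,\rho_0-r_0)\le\lambda_0$ for every $x\in B_{r_0}$. For Step~5 I would define the stopping-time radius as the continuous sup $r_x:=\sup\{r\in(0,\rho_0-r_0]:U(x,r)+F(x,r)+G(x,r)>\lambda\}$, run the $5r$-Vitali lemma, and note that because $r_x<\rho_0-r_0<R/2\le \tfrac12\mathrm{dist}(x,\Omega^c)$, each ball $B_i$ in the covering has $2B_i\subset\Omega$, so the hypothesis \eqref{hyp:extsec2} applies at $B_i$.

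The single genuinely new estimate, which is the analogue of the splitting \eqref{eq:splittingthesum}, is the bound $m_u^{\Omega,\mathrm{loc}}(B_i)\lesssim\lambda$ (and similarly for $f^q$ and $g^q$): the supremum defining $m_u^{\Omega,\mathrm{loc}}(B_i)$ is over $r\in[r_i,\tfrac12\mathrm{dist}(x_i,\Omega^c))$, which I would partition into the \emph{inner} range $r\in[r_i,\rho_0-r_0]$ (where the stopping-time definition gives $\fint_{B(x_i,r)}u\le\lambda$ up to a doubling constant) and the \emph{outer} range $r>\rho_0-r_0$ (where $B(x_i,r)\subset B(x_0,r+r_0)$ is admissible for $m_u^\Omega(B)$ thanks to the $12B\subset\Omega$ assumption, hence the average is controlled by $\lambda_0\le\lambda$ via doubling). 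I expect this dyadic-free splitting to be the main technical point to get right; once it is in place, everything else (the Hardy--Littlewood bound on the level set $\{M(u\mathbbm{1}_{B_{\rho_0}})>\lambda/3\}$ in Step~6, the iteration of $\varphi(t_\ell)\le\cdots$ in Step~7, and the conversion from $g$ back to $h^{ps/q}$ at the end) proceeds exactly as in Theorem~\ref{GeneralGehering.cor}, producing \eqref{concl:extsec2} with $2B$ playing the role of $NB$ there.
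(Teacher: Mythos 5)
Your proposal follows the same route as the paper's proof: introduce $g^q=A_R^q h^s\mathbbm{1}_{2B}$, replace the dyadic stopping time by a continuous one, run the Vitali covering, and — the key new step that you correctly isolate — split the supremum defining $m^{\Omega,\mathrm{loc}}_{\cdot}(B_i)$ into an inner range $[r_i,\rho_0-r_0)$ controlled by the stopping time and an outer range $[\rho_0-r_0,\tfrac12\dist(x_i,\Omega^c))$ controlled by $m^\Omega_{\cdot}(B)$ thanks to the room afforded by $12B\subset\Omega$. This matches the argument in Section~\ref{sec:extensions} in all essentials.

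One point deserves more care than your write-up gives it. You assert that ``$\fint_{B(x,\rho_0-r_0)}v\lesssim \fint_{B(x_0,\rho_0)}v\le m_v^\Omega(B)$'' is the replacement for \eqref{doubavgineqcor.eq}; but with an implicit constant independent of $\rho_0-r_0$ this is false, since the doubling comparison between $B(x,\rho_0-r_0)$ and $B(x_0,\rho_0)$ degenerates as $\rho_0-r_0\to 0$. The correct statement carries the factor $\gamma=(R/(\rho_0-r_0))^D$, which must appear explicitly in $\lambda_0$ (the paper sets $\lambda_0 = C\gamma\, m^\Omega_u(2B) + C(\gamma m^\Omega_{f^q}(2B))^{1/q} + C(2R)^\beta(\gamma m^\Omega_{h^s}(2B))^{1/s}$) and hence produces a factor $(R/(\rho_0-r_0))^\eta$ multiplying the lead term of the one-step estimate $\varphi(r_0)\lesssim(\cdots)+\epsilon_p\varphi(\rho_0)$. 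This quantitative growth cannot be swept into a ``large enough $C$'': it is precisely what the iteration in Step~7 has to beat, either by the discrete scheme of Theorem~\ref{GeneralGehering.cor} or by the standard iteration lemma (Lemma~6.1 in \cite{giustibook}) the paper invokes. Once that factor is tracked explicitly, your argument is complete and coincides with the paper's.

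Two harmless deviations: you take $\rho_0\le\tfrac32R$ where the paper takes $\rho_0\le2R$ (both are admissible since $B_{\rho_0}\subset 2B$ in either case), and your $\lambda_0$ carries a redundant extra term in $h^s$ that is already dominated by $R^\beta(m^\Omega_{h^s}(B))^{1/s}$.
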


\begin{proof}
We prove the claim for $B = B(x_0,R)$ with $x_{0} \in X$, $R> 0$ and {$12B \subset \Omega$}. We point out the relevant changes to the proof of Theorem \ref{GeneralGehering.cor}. Having fixed $B$, we repeat \emph{Step 1} as before (we take $N = 2$) to define $g^q= A_{R}^{q} h^s \mathbbm{1}_{2B}$ with $A_{R}$ a constant so that for any ball $B_{r}$ contained in $2B$
we have 
\[r^\beta \left(\fint_{B_{r}}h^s \, d\mu\right)^{1/s}  \lesssim    \left(\fint_{B_{r}}g^q \, d\mu\right)^{1/q} \lesssim  R^\beta \left(\fint_{2 B}h^s \right)^{1/s} . \]

In \emph{Step 2}, fix $r_0$ and $\rho_0$ real numbers satisfying $R \le r_0 <  \rho_0 \le 2R$. For $x \in B_{r_0}:= B(x_0, r_0)$, we have that 
$$B(x,  \rho_0 - r_0  ) \subset B(x_0, 2  R) \subset B(x , 4  R ),$$
and consequently for any positive function $v$,
\begin{equation}
\label{maximal1}
\begin{split}
\fint_{B(x, ( \rho_0 - r_0 ))} v \, d\mu  & \le  \frac{\mu(B(x_0, 2  R))}{\mu(B(x,\rho_0 - r_0))}   \fint_{B(x_0, 2  R)} v \, d\mu
\\& \lesssim  \left( \frac{R}{\rho_0 - r_0} \right)^{D} \fint_{B(x_0, 2 R)} v \, d\mu,   
\end{split}
\end{equation}
where we used the constant $D$ from the doubling dimension in the last line. Set 
$ \gamma  := ( R /(\rho_0 - r_0))^{D}$. 

We repeat \emph{Step 3} as it is. In \emph{Step 4}, we define three functions 
\begin{equation*}
\begin{split}
U(x,r): = \fint_{B(x,r)} u \, d \mu, \ \ \  F(x,r): = \left(\fint_{B(x,r)} f^q \, d \mu\right)^{1/q},  G(x,r): = \left(\fint_{B(x,r)} g^q \, d \mu\right)^{1/q},
\end{split}
\end{equation*}
and for $\lambda > \lambda_0$, we denote the relevant level sets by
\begin{align*}
U_\lambda := B_{r_0} \cap \{u > \lambda\}, \quad
F_\lambda := B_{r_0} \cap \{f > \lambda\}, \quad
G_\lambda := B_{r_0} \cap \{g > \lambda\}.
\end{align*}
We set
\[  \lambda_0 := C \gamma  m^\om_u(2 B) + C \left( \gamma  m^\om_{f^q}(2B)\right)^{1/q}  +  C (2R)^\beta  \left(  \gamma  m^\om_{h^s}(2B)\right)^{1/s},  \]
where $C$ is a constant independent of $u$ and the ball $B$, {chosen such that, by} an inclusion relation as in \eqref{maximal1} we obtain  
\begin{equation}
\label{maximal2}
U(x,\rho_0 - r_0) + F(x,\rho_0 - r_0) + G(x,\rho_0 - r_0) \leq \lambda_0 
\end{equation}
for all $x \in B(x_0,\rho_0)$. Finally, we define as before $$\Omega_\lambda : = \Big \{x \in U_\lambda \cup F_\lambda \cup G_\lambda: \text{$x$ is a Lebesgue point for $u, f^q$ and $g^q$} \Big\}.$$ 

In \emph{Step 5}, we note, as before, that if $x \in \Omega_\lambda$ then
$$\lim_{r \to 0} U(x,r) + F(x,r) + G(x,r) > \lambda,$$
and thus for $x \in \Omega_\lambda$ we can define the stopping time radius, this time continuously, as
\begin{equation*}
\begin{split}
r_x := \sup \big\{ r < \rho_0 - r_0 : 
 \   U(x,r) + F(x,r) + G(x,r) > \lambda \big\}.
\end{split}
\end{equation*}
Remark that \eqref{maximal2} implies that $r_{x}< \rho_0 - r_0$. 
Of course $\Omega_\lambda \subset \cup_{x \in \Omega_\lambda} B(x, r_x/5)$. By the Vitali Covering Lemma (5r-Covering Lemma) there exists a countable collection of balls $\{B(x_i,r_i)\} = \{B_i\}$ with $r_i = r_{x_i}$ such that $\{\tfrac{1}{5}B_i\}$ are pairwise disjoint and $\G_\lambda \subset \cup_i B_i$.   

We make three observations:
\begin{enumerate}
\item[(i)] For each $i$, either $\fint_{B_i} u \, d \mu \ge \frac \lambda 3$, $(\fint_{B_i} f^q \, d \mu)^{1/q} \ge  \frac \lambda 3$, or $(\fint_{B_i} g^q \, d \mu)^{1/q} \ge  \frac  \lambda 3$.
\item[(ii)] The radius of each $B_i$ is less than $\rho_0 - r_0$ and $x_i \in B(x_0,r_0)$ so $B_i \subset B(x_0,\rho_0)$.
\item[(iii)] Each $r \in [r_i, \rho_0 - r_0)$ is `above' or at the stopping time and 
\[
\fint_{B(x_i,r)} u \, d\mu + \left(\fint_{B(x_i,r)} f^q \, d\mu \right)^{1/q} +  \left(\fint_{B(x_i,r)} g^q \, d\mu \right)^{1/q} \lesssim \lambda.
\]
\end{enumerate}
We obtain from \eqref{hyp:extsec2} that
\begin{equation*}
\begin{split}
u^q(U_\lambda) &\le u^{q}(U_\lambda \cup F_\lambda \cup G_\lambda) \le \sum_i  u^q(B_{i})
\\& \lesssim \sum_i \mu(B_i)\big(m_u^{\Omega,loc}(B_i) + (m_{f^q}^{\Omega,loc}(B_i))^{1/q} + r_{i}^\beta (m_{h^s}^{\Omega,loc}(B_i))^{1/s}\big)^q.
\end{split}
\end{equation*} 
We handle the term involving $f$, to begin we split $m_{f^q}^{\Omega,loc}(B_i)$ as
\begin{equation}
\label{eq:splittingthemax}
\begin{split}
m_{f^q}^{\Omega,loc}(B_i) &= \sup _{ r  \in [r_i , \frac{1}{2} {\rm dist} \,(x_i, \Omega^{c}))} \fint_{B(x_i,r)} f^{q} \, d \mu \\
		&= \max \left( \sup _{ r  \in [r_i ,  \rho_0 - r_0)} \fint_{B(x_i,r)} f^{q} \, d \mu , \sup _{ r  \in [\rho_0 - r_0, \frac{1}{2} {\rm dist} \,(x_i, \Omega^{c}))} \fint_{B(x_i,r)} f^{q} \, d \mu \right) .
\end{split}
\end{equation}
By observation (iii), we see that 
\[\sup _{ r  \in [r_i ,  \rho_0 - r_0)} \fint_{B(x_i,r)} f^{q} \, d \mu \lesssim \lambda.\]
On the other hand,
\begin{align*}
&\sup _{ r  \in [\rho_0 - r_0, \frac{1}{2} {\rm dist} \,(x_i, \Omega^{c}))} \fint_{B(x_i,r)} f^{q} \, d \mu 
	 = \sup _{ k \in [1, \frac{1}{2(\rho_0 - r_0)} {\rm dist} \,(x_i, \Omega^{c}) )} \fint_{B(x_i, k (\rho_0 - r_0))} f^{q} \, d \mu  \\
	& \lesssim  \sup _{ k \in [1, \frac{1}{2(\rho_0 - r_0)} {\rm dist} \,(x_i, \Omega^{c}) )} \left( \frac{k (\rho_0 - r_0) + r_0}{k (\rho_0 - r_0)} \right)^{D} \fint_{B(x_0,k (\rho_0 - r_0) + r_0)} f^{q} \, d\mu  \\
	& \lesssim  \left( \frac{R}{\rho_0 - r_0} \right)^{D} m_{f^{q}}^{ \Omega}(B) \lesssim \lambda_0^{q} < \lambda^q,
\end{align*}
where the last line is justified as follows:
By the upper bound on $k$ in the supremum, we always have  
\[R \leq k (\rho_0 - r_0) + r_0 \leq  \frac{1}{2} {\rm dist} \,(x_i, \Omega^{c}) + r_0 \leq  \frac{1}{2} {\rm dist} \,(x_0, \Omega^{c}) + 3 R < \frac{3}{4} {\rm dist} \,(x_0, \Omega^{c}),\]
where we used $|x_i - x| < r_0 < \rho _0 \leq 2R$ and $B(x_0, 12 R) \subset \Omega$. Hence every $B(x_0,k (\rho_0 - r_0) + r_0)$ is admissible in the definition of $m_{f^{q}}^{ \Omega}(B)$ and we get the bound claimed before since $m_{f^{q}}^{ \Omega}(B) \leq 2^D m_{f^{q}}^{ \Omega}(2B)$. Altogether,
\begin{align*}
 (m_{f^q}^{\Omega,loc}(B_i))^{1/q} \lesssim \lambda.
\end{align*}
Terms with $u$ and $h$ are estimated similarly.
The rest of \emph{Step 5} follows as before and we obtain
\[\cup_i  B_i \subset  \{ M(u\mathbbm{1}_{B_{\rho_0}}) > \lambda/3\}\cup  \{  M(f^q\mathbbm{1}_{B_{\rho_0}}) > (\lambda/3)^q\}
 \cup \{ M(g^q\mathbbm{1}_{B_{\rho_0}}) > (\lambda/3)^q\}.\] 

\emph{Step 6} involving maximal function arguments to estimate the measure of the set in the above display {for $\lambda > \lambda_0$ as well as the overall contribution for $\lambda < \lambda_0$} is repeated without changes. In the end, we reach an inequality of the form \eqref{rdy2iterate.eq}. Indeed, set 
\[\varphi(t) := \int_{B(x_0,t)} u_m^{p-q}u^q \, d\mu, \quad \alpha_p :=  \big(m_u^{\Omega}(2B) + (m_{f^q}^{\Omega}(2B))^{1/q} + (R)^{\beta }(m_{h^s}^{\Omega}(2B))^{1/s}\big)^p \] 
and for $p \in (q,2q)$ we may summarize our estimates as 
\begin{equation*}
\varphi(r_0) \lesssim  \mu(B) \left( \frac{R}{\rho_0 - r_0} \right)^{\eta}  \alpha_p  + \epsilon_p \varphi(\rho_0) + \eps_p^{-1} \int_{2B} f^p\, d\mu + \eps_p^{-1} \int_{2B} g^p \, d\mu,
\end{equation*}
whenever $R\le r_0 < \rho_0 \le 2R$. Here $\epsilon_p = p-q$ and $\eta > 0$ is independent of $u$ and $B$. The claim \eqref{concl:extsec2} then follows from a well known iteration argument (see e.g. Lemma 6.1 in \cite{giustibook}) or from modifying the argument in \emph{Step 7}.  
\end{proof}

Note that the proof for the maximal-function-like object $m_u^{\Omega,loc}$ is actually simpler than for the tailed $a_u$. Several choices of how to discretize the scale parameters can be omitted. This setup is also very close but not comparable to Gehring's original assumption
\[(M u^{q})^{1/q} \lesssim Mu\]
where $q > 1$ and $M$ the Hardy--Littlewood maximal operator. Indeed, the left-hand side here does not have a maximal function and the right-hand side is a maximal function restricted to large scales (a non-local maximal function). 

\subsection{Domains}

We can define the tail functional $a_u^{\Omega,loc}$ restricted to an open set $\Omega$, for example
\[a_u^{\Omega,loc}(B) := \sum_{ \substack{k \geq 0 \\
									 2^{k+4} B \subset \Omega} } \alpha_k \fint_{2^{k} B} u \, d \mu   \]
and 
\[a_u^{\Omega}(B) := \sum_{ \substack{k \geq 0 \\
									 2^{k+1} B \subset \Omega} } \alpha_k \fint_{2^{k} B} u \, d \mu, \]
{where as before $(\alpha_k)_k$ is a non-increasing and summable sequence of positive numbers.} Then we can localize the assumptions of Theorem \ref{GeneralGehering.cor} to $\Omega$. 

\begin{corollary}
\label{cor:extsec_1}
Let $\Omega \subset X$ be an open set in a metric space $(X,d,\mu)$ {with doubling measure. Let $s, \beta >0$ and $q>1$ be such that $s<q$ and $\beta \geq D(1/s-1/q)$ where $D$ is any number satisfying \eqref{eq:homdim}}. Suppose that $u, f, h \ge 0$ with $u^q, f^q, h^s \in L^1_{loc}(\Omega, d\mu)$ and  $A\ge 0$ is a constant  such that for every ball $B = B(x, R)$ with $16 B \subset \Omega$ 
\begin{equation} 
\label{hyp:extsec_1}
\left(\fint_B u^q \, d\mu\right)^{1/q} \le Aa_u^{\Omega,loc}(B) + (a_{f^{q}}^{\Omega,loc}(B))^{1/q} + R^\beta(a_{h^{s}}^{\Omega,loc}(B))^{1/s}.
\end{equation} 
Then there exists $p>q$ such that for all balls $B$ with $32 B \subset \Omega$, 
\begin{equation} 
\begin{split}
\left(\fint_B u^p \, d\mu\right)^{1/p} &\lesssim a_u^{\Omega }( 4B) + (a_{f^{q}}^{\Omega}( 4B))^{1/q} + R^\beta(a_{h^{s}}^{\Omega}(4B))^{1/s}
\\& \qquad+ \left(\fint_{4 B}f^p \, d\mu\right)^{1/p} + R^\beta\left(\fint_{4 B}h^{ps/q} \, d\mu\right)^{q/sp},
\end{split}
\end{equation}
{where both $p$ and the implicit constant depend on $A, \beta, s,q, D$.}
\end{corollary}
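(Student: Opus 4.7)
The plan is to adapt the proof of Theorem \ref{GeneralGehering.cor} (with $N=2$) to the localized setting, exploiting the buffer $32B \subset \Omega$ to ensure that every stopping-time ball encountered satisfies the applicability hypothesis of \eqref{hyp:extsec_1}.

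Fix $B = B(x_0, R)$ with $32B \subset \Omega$ and $r_0, \rho_0 \in [R, 2R]$ as in Step 2 of the proof of Theorem \ref{GeneralGehering.cor}. The stopping-time balls $B_i = B(x_i, r_i)$ arising from the Vitali covering satisfy $x_i \in B(x_0, r_0)$ and $r_i < \rho_0 - r_0 \leq R$, hence $16 B_i \subset B(x_0, 17R) \subset 32 B \subset \Omega$; this legitimizes the use of \eqref{hyp:extsec_1} on each $B_i$. The localized tail $a_u^{\Omega,loc}(B_i) = \sum_{k : 2^{k+4}B_i \subset \Omega} \alpha_k \fint_{2^k B_i} u\, d\mu$ is then split at the stopping index $m_i$ with $2^{m_i} r_i = \rho_0 - r_0$. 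For $k < m_i$, the admissibility $2^{k+4} B_i \subset \Omega$ is automatic (since $2^{k+4} r_i \le 16R$ confines $2^{k+4} B_i$ to $17 B \subset \Omega$), and the inner averages are majorized by $C\lambda$ via observation (iii) of Step 5.

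For $k \geq m_i$ with $2^{k+4} B_i \subset \Omega$, the inclusion $2^k B_i \subset 2^{k-m_i+1} B = 2^{k-m_i-1}\cdot 4B$ combined with doubling converts the average over $2^k B_i$ into a constant times the average over the $(k - m_i - 1)$-st dilate of $4B$. The geometric point is that once $k-m_i$ is sufficiently large compared to $\log_2(R/(\rho_0 - r_0))$, the loose threshold $2^{k+4}$ in the definition of $a_u^{\Omega,loc}$ absorbs the base-point shift from $x_i$ to $x_0$ via the triangle inequality and guarantees admissibility of the corresponding term in $a_u^\Omega(4B)$. The finitely many intermediate terms $m_i \leq k \lesssim m_i + \log_2(R/(\rho_0-r_0))$ are handled by direct doubling, picking up a constant polynomial in $R/(\rho_0-r_0)$. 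Analogous estimates for $a_{f^q}^{\Omega,loc}(B_i)$ and $a_{h^s}^{\Omega,loc}(B_i)$ dictate the choice of threshold $\lambda_0$ as a suitable multiple of $a_u^\Omega(4B) + (a_{f^q}^\Omega(4B))^{1/q} + R^\beta (a_{h^s}^\Omega(4B))^{1/s}$.

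The remainder of the proof -- the auxiliary function $g$ from Step 1 (now supported on $4B$), the Vitali stopping-time construction, and the Hardy--Littlewood maximal function estimates of Step 6 -- carries over with only cosmetic changes; a minor point is that in observation (i)-(iii) for the stopping balls one has to verify that all tail terms on $2^k B_i$ that appear with $k<m_i$ are controlled by $\lambda$ using doubling at $k=0$ exactly as in the original argument. The polynomial factor in $R/(\rho_0 - r_0)$ arising in the resulting one-step inequality is of the form handled by the classical Giaquinta--Giusti iteration lemma (see Lemma 6.1 in \cite{giustibook}), which absorbs this factor upon choosing $p - q$ small enough and delivers the advertised estimate. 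The main obstacle is the tail-split bookkeeping of the previous paragraph: the constant $2^{k+4}$ in $a_u^{\Omega,loc}$, the blow-up factor $4$ in $a_u^\Omega(4B)$, and the buffer $32 B \subset \Omega$ are calibrated exactly so that the transition from $x_i$-centered to $x_0$-centered balls can be executed while keeping every referenced dilate admissible in $a_u^\Omega(4B)$.
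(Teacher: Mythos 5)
Your proposal follows the same strategy the paper suggests for this corollary (adapt the proof of Theorem~\ref{GeneralGehering.cor} with $N=2$, localize \`a la Corollary~\ref{cor:extsec2}, and absorb the $\ell$-dependent factor via iteration), and the broad outline is sound: the three-way split of the stopping-time tail, the observation that $16 B_i \subset \Omega$ legitimizes \eqref{hyp:extsec_1} on the stopping balls, and the role of the Giaquinta--Giusti iteration are all correctly identified. However, there is a concrete arithmetic error in the central admissibility claim, and it matters.

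You assert that for $k \geq m_i$ and $k-m_i$ large compared to $\ell := \log_2\bigl(R/(\rho_0-r_0)\bigr)$, the containment $2^kB_i \subset 2^{k-m_i-1}(4B)$ produces an admissible term in $a_u^\Omega(4B)$. Write $a := k-m_i$; admissibility of the $(a-1)$-st dilate of $4B$ requires $B(x_0, 2^{a+2}R)\subset\Omega$, i.e.\ $\dist(x_0,\Omega^c)\geq 2^{a+2}R$. On the other hand, the only information you extract from $2^{k+4}B_i\subset\Omega$ is
\begin{equation*}
\dist(x_0,\Omega^c)\ \geq\ \dist(x_i,\Omega^c)-2R\ \geq\ 2^{a+4}(\rho_0-r_0)-2R\ =\ 2^{a+4-\ell}R-2R.
\end{equation*}
When $\ell\geq 2$ one has $2^{a+4-\ell}\leq 2^{a+2}$ for \emph{every} $a$, so the required inequality $2^{a+4-\ell}-2 \geq 2^{a+2}$ fails uniformly: taking $k-m_i$ large does not repair it. Since the iteration of Step~7 drives $\rho_0-r_0=2^{-K\ell}R\to 0$, the regime $\ell\geq 2$ is not a corner case and the argument as written breaks down there.

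The fix stays within your framework but requires shifting the target index by roughly $\ell$, not by $1$: for $k-m_i\geq\ell+1$ one has $s:=2^kr_i=2^{k-m_i-\ell}R\geq 2R$, the ball $2^kB_i$ sits in $B(x_0,4s)=2^{k-m_i-\ell}(4B)$, and now the buffer from $2^{k+4}B_i\subset\Omega$ does give $\dist(x_0,\Omega^c)>16s-2R>8s$, which is admissibility of that dilate. The resulting re-indexed sum carries the factor $\alpha_{k'+m_i+\ell}$, and the monotonicity of $(\alpha_k)$ (the same ingredient already used in \eqref{eq:splittingthesum}) is what bounds it by $a_u^\Omega(4B)$. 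Your ``intermediate range'' $m_i\leq k\leq m_i+\ell$ is indeed handled by crude doubling against $\fint_{4B}$ at a cost $\sim 2^{\ell D}$, exactly as you describe, but the point is that the remaining range needs an $\ell$-shifted, not a $1$-shifted, index, and this shift is what the non-increasing hypothesis on $\alpha$ is for. As written, your sketch absorbs the polynomial factor but leaves the tail sum comparison unjustified.

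Two small side remarks: your containment ``$16B_i\subset B(x_0,17R)$'' should be $18R$ (since $x_i\in B_{r_0}$ with $r_0<2R$), though this does not affect the conclusion $\subset 32B$; and the auxiliary function $g$ should be supported on $2B$ rather than $4B$ to match the role it plays in Step~1 and Step~6 of Theorem~\ref{GeneralGehering.cor}, but this is cosmetic.
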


\begin{proof}
Theorem \ref{GeneralGehering.cor} shows how to deal with the tail. Corollary \ref{cor:extsec2} shows how to adapt the proof to the setting relative to $\Omega$. The proof of this Corollary can be reconstructed following the proof of Theorem \ref{GeneralGehering.cor} and carefully adapting the estimation in \eqref{eq:splittingthesum} in the spirit of estimating \eqref{eq:splittingthemax} to make sure that all relevant balls appearing in the estimates are contained in $\Omega$. 
\end{proof}

\subsection{Convolutions}
In the Euclidean setting where $(X,d,\mu)$ is $\mathbb{R}^{n}$ equipped with the usual distance and the Lebesgue measure, we can realize the functionals $a_u$ as convolutions
\[ a_u(B(x,r)) =  (\varphi_r * u)(x) \]
where $\varphi$ has suitable decay and integrability and $\varphi_{r}(x)= r^{-n}\varphi(x/r)$. More precisely, our assumptions correspond to $\varphi$ being bounded, radial, decreasing and globally integrable. A convolution makes sense in certain groups, so this kind of special functional can also be considered, for instance, in nilpotent Lie groups as in \cite{VSC}. 

\section{Very weak \texorpdfstring{$A_{\infty}$}{Aoo} weights}
\label{sec:Ainfty}

For a weight (that is, a non-negative locally integrable function), the condition 
$$
\int_{B} M(\mathbbm{1}_{B}w) \, d\mu \le C  \int_{B} w\, d\mu 
$$
valid for some $C<\infty$ and all balls $B$ of $X$ can be taken as a definition of the $A_{\infty}$ class, where $M$ is the uncentered maximal operator, see \cite{Fujii, W1987} for the Euclidean case with Lebesgue measure. In spaces of homogeneous type, this condition implies higher integrability with an exponent that can be computed from the constant $C$ and the structural constants of $X$, see \cite{HPR2012}. This was extended in \cite{AHT2017} to  weights in the weak $A_{\infty}$ class defined by 
\begin{equation}
\label{eq:vws}
\fint_{B} M(\mathbbm{1}_{B}w) \, d\mu \le C   \fint_{\sigma B} w\, d\mu.
\end{equation}
where $\sigma>1$ is given. The classes are shown to be independent of $\sigma$ provided $\sigma>K$, $K$ being the quasi-metric constant, and their elements still have a higher integrability. The methods passing through a dyadic analog  yield an accurate estimate of the exponent in terms of the best $C$ in the definition.  We note  that the dilation parameter $\sigma$ is uniform: it is the same for all balls. Our methods allow us to  remove the uniformity, that is we define the \emph{very weak $A_{\infty}$ class} as the set of weights such that for all balls, 
\begin{equation}
\label{eq:vw}
\fint_{B} M(\mathbbm{1}_{B}w) \, d\mu \le C  \sup_{\sigma\ge 1} \fint_{\sigma B} w\, d\mu < \infty.
\end{equation}
The quantity in the middle is the same functional as the one defined in Section \ref{sec:maximal} when $\Omega=X$. 

We denote by \vw this class. As the right-hand side of \eqref{eq:vw} requires boundedness of all averages on large balls, this rules out weights growing at $\infty$.  For this reason,   it is neither contained in, nor containing  the class $A_{\infty}^{\mathrm{weak}}$ introduced in \cite{AHT2017}. Typically, such very weak  $A_{\infty}$ weights arise from fractional equations. See the next section. 

\begin{theorem}\label{thm:ainfty} For any very weak $A_{\infty}$ weight $w$, there exist $p>1$ and $C'<\infty$ such that for all balls $B$, 
\begin{equation}
\label{eq:vwp}
\bigg(\fint_{B} M(\mathbbm{1}_{B}w)^p \, d\mu\bigg)^{1/p}  \le C'  \sup_{\sigma\ge 1} \fint_{\sigma B} w\, d\mu.
\end{equation}
\end{theorem}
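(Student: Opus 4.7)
Fix a ball $B$ and set $\Lambda := \sup_{\sigma\ge 1}\fint_{\sigma B} w\,d\mu$, $u := M(\mathbbm{1}_B w)$, and $E_\lambda := B\cap\{u > \lambda\}$. The goal is to establish a distributional bound
\[
\mu(E_\lambda)\le C\mu(B)(\Lambda/\lambda)^{1+\delta}\qquad\text{for all }\lambda\ge C_0\Lambda,
\]
for some $\delta>0$; the layer-cake formula, after splitting the integral at $\lambda = C_0\Lambda$, then gives \eqref{eq:vwp} for any $p\in(1,1+\delta)$. The core argument is a Calder\'on--Zygmund good-$\lambda$ estimate of the type used in \cite{AHT2017} for the weak $A_\infty$ class, adapted to accommodate the non-uniform dilation appearing in the very weak version.

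For $\lambda > C_0\Lambda$, I apply a Vitali-type stopping-time decomposition to $\mathbbm{1}_B w$, obtaining a family of balls $\{B_i\}$, centered in $B$, with pairwise disjoint shrunken sub-balls, covering $E_\lambda$, and satisfying $\fint_{B_i}\mathbbm{1}_B w\sim \lambda$. Doubling forces $m_w^X(B_i)\lesssim \Lambda$ uniformly in $i$: any dilate $\sigma B_i$ is contained in (or contains) a controlled dilate of $B$, up to a factor depending only on $K$ and $C_d$, and averages on comparable dilates are equivalent. The very weak $A_\infty$ hypothesis applied on each $B_i$ then yields $\fint_{B_i} M(\mathbbm{1}_{B_i}w)\,d\mu \lesssim \Lambda$.

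For $x\in B_i$, decompose $u(x)\le M(\mathbbm{1}_{B_i}w)(x) + M(\mathbbm{1}_{B\setminus B_i}w)(x)$. The second summand is bounded uniformly on $B_i$ by $Cm_w^X(B_i)\lesssim\Lambda$, since any ball centered in $B_i$ that catches mass outside $B_i$ has radius comparable to $r(B_i)$. Hence, upon enlarging $C_0$, one gets $E_\lambda\cap B_i\subset\{M(\mathbbm{1}_{B_i}w)>\lambda/2\}$. Chebyshev applied to the $L^1$-average of $M(\mathbbm{1}_{B_i}w)$ on $B_i$ gives $\mu(E_\lambda\cap B_i)\lesssim \Lambda\mu(B_i)/\lambda$. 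Summing and using the stopping-time consequence $\sum_i\mu(B_i)\lesssim \lambda^{-1}\int_B w\le \Lambda\mu(B)/\lambda$ (via the bounded overlap of the $B_i$) yields $\mu(E_\lambda)\lesssim \mu(B)(\Lambda/\lambda)^2$, so $\delta = 1$ works.

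The main obstacle is controlling the outer tail $M(\mathbbm{1}_{B\setminus B_i}w)$ on $B_i$ uniformly in $i$ and ensuring uniformity of the hypothesis constants across all stopping-time balls — issues absent in the classical setting of dyadic cubes inside $B$. Both are handled by the doubling property and the geometric fact that outer mass only contributes through balls of comparable size. An alternative route, closer in spirit to the rest of the paper, is to combine the estimate $\mu(E_\lambda)\lesssim \mu(B)(\Lambda/\lambda)^2$ with $w\le u$ a.e.\ to first obtain a reverse H\"older estimate $(\fint_B w^q)^{1/q}\lesssim \Lambda$ for some $q>1$, then invoke Corollary~\ref{cor:extsec2} with $\Omega = X$, $u = w$, $f = h = 0$ to self-improve to $(\fint_B w^p)^{1/p}\lesssim \Lambda$ for some $p>q$, and finish by $L^p$-boundedness of $M$.
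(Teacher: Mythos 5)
Your plan is to establish a distributional bound $\mu(E_\lambda)\lesssim\mu(B)(\Lambda/\lambda)^{2}$ for $\lambda\gtrsim\Lambda$, by applying the very weak $A_\infty$ hypothesis on each stopping ball $B_i$ and using Chebyshev. This breaks down at the claim ``$m_w^X(B_i)\lesssim\Lambda$'': by construction of the stopping time, $\fint_{B_i}w\geq\fint_{B_i}\mathbbm{1}_Bw>\lambda$, so in fact $m_w^X(B_i)=\sup_{\sigma\geq1}\fint_{\sigma B_i}w\geq\lambda\gg\Lambda$. The comparison argument you sketch (comparing dilates of $B_i$ to dilates of $B$) only works for $\sigma r(B_i)\gtrsim r(B)$; for small $\sigma$ the ball $\sigma B_i$ is a tiny ball deep inside $B$, and its average of $w$ is not controlled by $\Lambda$ — indeed, the ball $B_i$ itself shows it is not. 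The correct consequence of the stopping time plus the \vw hypothesis is $\fint_{B_i^\ast}M(\mathbbm{1}_{B_i^\ast}w)\lesssim\lambda$ (not $\lesssim\Lambda$), which via Chebyshev yields only $\mu(\{M(\mathbbm{1}_{B_i^\ast}w)>C\lambda\}\cap B_i)\lesssim\mu(B_i)/C$. Plugging this back and summing over $i$ gives only $\mu(E_{C\lambda})\lesssim C^{-1}\lambda^{-1}\int_Bw\lesssim\mu(B)\Lambda/(C\lambda)$, which is the trivial weak-$(1,1)$ bound. The $(\Lambda/\lambda)^2$ decay therefore does not follow; consequently your ``alternative route'' in the last sentence, which starts from the same decay, is equally blocked.

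To get higher integrability from this ingredient you must either (a) iterate the good-$\lambda$ estimate, first establishing $\mu(E_{C\lambda})\leq\eta\,\mu(E_\lambda)$ with $\eta<1/C$ (which requires placing $\cup_iB_i$ inside the previous level set $\{M(\mathbbm{1}_Bw)>\lambda\}$ and tracking constants carefully), and then summing the geometric series in the layer-cake, or (b) follow the route the paper actually takes: a hole-filling iteration over a continuum of intermediate radii $R\leq r_0<\rho_0\leq 2R$. The paper discretizes the scales via a modified maximal operator $\widetilde{M}$ adapted to the family of radii $2^k(\rho_0-r_0)$, derives an estimate of the Muckenhoupt type $u(B_{r_0}\cap\{u>\lambda\})\lesssim\lambda\,\mu(B_{\rho_0}\cap\{M(\mathbbm{1}_{B_{\rho_0}}w)>\lambda\})$ from the \vw hypothesis on the stopping balls (with the crucial observation that the relevant dilated averages are $\leq\lambda$, not $\leq\Lambda$), and plugs this into a truncated layer-cake to obtain $\varphi(r_0)\lesssim\mu(B)C_d^{p\ell}\Lambda^p+\epsilon_p\varphi(\rho_0)$; it then closes by the same geometric iteration scheme as in Step 7 of Theorem~\ref{GeneralGehering.cor}. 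That iteration is the mechanism that yields higher integrability; a one-shot Chebyshev plus summation, as in your proposal, cannot.
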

 
\begin{remark}
\label{remark:weights}
The improvement of integrability on a given ball $B$ only depends on the finiteness of the right-hand side  for that same ball and nothing else, as the proof will show.  Hence, one can also define the \emph{very weak $A_{\infty}$ class on $B$} by the condition \eqref{eq:vw} on that very ball. The theorem remains valid if one replaces (in the assumption and the conclusion) the supremum by a tail as before. {That variant leads to the class $C_p$ (see Section \ref{sec:cp}).} The advantage is to allow some  possible growth for which the tail is finite while the supremum is not. Finally our argument works with the supremum replaced by one average  with a fixed dilation parameter. {We leave these extensions to the interested reader. They will not be needed here.}
\end{remark}

\begin{proof} 
To simplify we do the proof in the metric case. Again the trick to reduce the quasi-metric case to the metric case applies, see Section~\ref{sec:quasi}. The argument follows again that of Theorem \ref{GeneralGehering.cor} with $f,h=0$ but with some changes.  

We pick $N=2$. We ignore \emph{Step~1} and have the setup of \emph{Step~2}. Having fixed the ball $B=B(x_{0}, R)$, the parameters $ \rho_{0}, r_{0}$, and $\ell$ such that $2^\ell(\rho_{0}-r_{0})=R$, define
$$\widetilde M v(x) := \sup_{k\in \Z} \fint_{B(x, 2^k(\rho_{0}-r_{0}))} |v|\, d\mu.$$
Then if $M_{c}$ designates the centered maximal operator, 
$$\widetilde M v \le M_{c}v \le Mv \le \kappa'M_{c}v \le \kappa \widetilde Mv.$$
Indeed, $Mv \le \kappa'M_{c}v$ is classical, while $M_{c}v\lesssim \widetilde Mv$ follows from the doubling property and $\kappa$ does not depend on $\rho_{0}-r_{0}$ in particular.  By the same token,   in the right-hand side of \eqref{eq:vw}   we may restrict to the  supremum over all $\sigma= 2^k$ for  integers $k\ge 0$. This only causes a change in the constant $C$.

We modify \emph{Step~3} as follows. With the truncation of the maximal function at level $m$, 
\begin{equation}
\label{Ainfty.eqstartlayercake.eq}
\begin{split}
\int_{B_{r_0}} (M(\mathbbm{1}_{B_{r_{0}}}w))_m^{p} \, d\mu &  \le \kappa^{p} \int_{B_{r_0}} (\widetilde M(\mathbbm{1}_{B_{r_{0}}}w))_{m/\kappa}^{p-1}\ \widetilde M(\mathbbm{1}_{B_{r_{0}}}w) \, d\mu
\\
&
\le  \kappa^{p} \int_{B_{r_0}} (\widetilde M(\mathbbm{1}_{B_{\rho_{0}}}w))_{m/\kappa}^{p-1}\ \widetilde M(\mathbbm{1}_{B_{\rho_{0}}}w) \, d\mu
\\
&
= \kappa^{p} (p-1)\int_0^{m/\kappa} \lambda^{p - 2} u(B_{r_0} \cap \{u > \lambda\})\, d\lambda
\end{split}
\end{equation}
with $u:=\widetilde M(\mathbbm{1}_{B_{\rho_{0}}}w).$

In \emph{Step~4}, we pick $\lambda_{0} := C_d^{\ell +2} \sup_{\sigma\ge 1} \fint_{\sigma B} w\, d\mu $ (which is assumed finite otherwise there was nothing to prove), where we recall that $C_{d}$ is the doubling constant. We observe that for $x\in B_{r_{0}} = B(x_0,r_0)$ and $k \ge 0$,
\begin{align}
\label{eq1:Ainfty}
 \fint_{B(x,2^{k}(\rho_{0}-r_{0}))} \mathbbm{1}_{B_{\rho_{0}}} w \, d\mu  \le C_d^{\ell +2}  \fint_{B(x_0,2^{k+1}R)} w \, d\mu \le \lambda_{0}.
\end{align}

The stopping time of \emph{Step~5} is slightly different. Let $\lambda  > \lambda_0$. Pick $x\in B_{r_0} \cap \{u > \lambda\}$. As $u(x)=\widetilde M(\mathbbm{1}_{B_{\rho_{0}}}w)(x)>\lambda>\lambda_{0}$, the observation above and $B(x,2^{k}(\rho_{0}-r_{0}))\subset B_{\rho_{0}}$ when $k<0$ imply 
$$u(x) =\sup_{k<0}  \fint_{B(x,2^{k}(\rho_{0}-r_{0}))} \mathbbm{1}_{B_{\rho_{0}}} w \, d\mu= \sup_{k<0}  \fint_{B(x,2^{k}(\rho_{0}-r_{0}))}  w \, d\mu .$$  Let $k_{x}<0$ be the supremum of those $k<0$ for which $\fint_{B(x,2^{k}(\rho_{0}-r_{0}))}  w \, d\mu >\lambda$. We extract the covering $B_{i}=B(x_{i}, 2^{k_{x_{i}}}(\rho_{0}-r_{0}))$ of $B_{r_0} \cap \{u > \lambda\}$, where all $B_{i}$ are subballs of $B_{\rho_{0}}$ {with the $\frac{1}{5} B_i$ pairwise disjoint. We claim} that  if  $B_{i}^*=2B_{i}$, then for all $x\in B_{i}\cap B_{r_{0}}$, we have $u(x)\le C_{d}^2 M(\mathbbm{1}_{B_{i}^*} w)(x)$.

Indeed, fix $x\in B_{i}\cap B_{r_{0}}$ and pick $k\in \Z$. In the case where $k\ge 0$ we have by \eqref{eq1:Ainfty},
$$
\fint_{B(x,2^{k}(\rho_{0}-r_{0}))} \mathbbm{1}_{B_{\rho_{0}}} w \, d\mu   \le \lambda_{0} <\lambda  < \fint_{B_{i}}  w \, d\mu \le  M(\mathbbm{1}_{B_{i}} w)(x).
$$
 In the  case where $0>k\ge k_{x_{i}}$, we have {either by the stopping time or again by \eqref{eq1:Ainfty} if $k=-1$,}
\begin{align*}
\label{}
   \fint_{B(x,2^{k}(\rho_{0}-r_{0}))}  \mathbbm{1}_{B_{\rho_{0}}} w \, d\mu &\le  \frac{\mu(B(x_{i},2^{k+1}(\rho_{0}-r_{0})))}{\mu(B(x,2^{k}(\rho_{0}-r_{0})))}\fint_{B(x_{i},2^{k+1}(\rho_{0}-r_{0}))}  w \, d\mu \\& \le C_{d}^2 \lambda < C_{d}^2  \fint_{B_{i}}  w \, d\mu \le C_{d}^2 M(\mathbbm{1}_{B_{i}} w)(x).    \end{align*}
In the  case where $k<k_{x_{i}}$,  $B(x,2^{k}(\rho_{0}-r_{0}))\subset B_{i}^*$ and $B(x,2^{k}(\rho_{0}-r_{0})) \subset B_{\rho_{0}}$, hence 
$$
 \fint_{B(x,2^{k}(\rho_{0}-r_{0}))}  \mathbbm{1}_{B_{\rho_{0}}} w \, d\mu=  \fint_{B(x,2^{k}(\rho_{0}-r_{0}))}  w \, d\mu \le  M(\mathbbm{1}_{B_{i}^*} w)(x).
$$
Thus, the intermediate claim is proved.

Now, using this together with \eqref{eq:vw} and the opening remark, we obtain
 \begin{align*}
\label{}
   u(B_{r_0} \cap \{u > \lambda\})\le \sum_{i} u(B_{i}\cap B_{r_{0}}) &\le  \sum_{i}   \int_{B_{i}\cap B_{r_{0}}} u\, d\mu  \\&\le \sum_{i}  C_{d}^2 \int_{B_{i}\cap B_{r_{0}}} M(\mathbbm{1}_{B_{i}^*} w)\, d\mu   \\
   &\le \sum_{i}  C_{d}^2\   \mu(B_{i}^*) \fint_{B_{i}^*} M(\mathbbm{1}_{B_{i}^*} w)\, d\mu 
   \\&
   \le C C_{d}^2 \sum_{i}  \mu(B_{i}^*) \sup_{k\ge 0 } \fint_{2^k B_{i}^*} w\, d\mu \\&
   \le C C_{d}^2 \sum_{i}  \mu(B_{i}^*)\  \lambda
   \\
   &  \lesssim \lambda\  \mu(\cup B_{i}). 
\end{align*}
The next to last inequality is by definition of $B_{i}^*=2B_{i}$, hence all the averages do not exceed {$\lambda_0 <\lambda$ by \eqref{eq1:Ainfty}},  and the last inequality uses doubling and the fact that $\frac{1}{5}B_{i}$ are disjoint.  As $B_{i}\subset B_{\rho_{0}}$ and  $\lambda<  \fint_{B_{i}}  w \, d\mu $
we have $ \cup B_{i} \subset B_{\rho_{0}}\cap \{  M(\mathbbm{1}_{B_{\rho_{0}}} w)>\lambda\}
$
and we have obtained
$$
u(B_{r_0} \cap \{u > \lambda\}) \lesssim \lambda\  \mu(B_{\rho_{0}}\cap \{  M(\mathbbm{1}_{B_{\rho_{0}}} w)>\lambda\}).
$$

\emph{Step~6} is now done as follows by cutting the rightmost integral in \eqref{Ainfty.eqstartlayercake.eq} at $\lambda_{0}$. Let $\varphi(r_{0}) :=  \int_{B_{r_0}} (M(\mathbbm{1}_{B_{r_{0}}}w))_m^{p} \, d\mu$. Then 
\begin{align*}
\label{}
\varphi(r_{0})    & \le  \kappa^{p} \lambda_0^{p-1}   \ u(B_{r_0})  +  \kappa^{p} (p-1)  \int_{\lambda_{0}}^{m/\kappa} \lambda^{p - 2} u(B_{r_0} \cap \{u > \lambda\})\, d\lambda  \\
&   \lesssim  \mu(B)C_{d}^{p\ell}  \ \bigg(\sup_{\sigma\ge 1} \fint_{\sigma B} w\, d\mu\bigg)^p  + 
     (p-1)  \int_{\lambda_{0}}^{m/\kappa} \lambda^{p - 1} \mu(B_{\rho_{0}}\cap \{  M(\mathbbm{1}_{B_{\rho_{0}}} w)>\lambda\})\, d\lambda
    \\
& \lesssim \mu(B)C_{d}^{p\ell}  \ \bigg(\sup_{\sigma\ge 1} \fint_{\sigma B} w\, d\mu\bigg)^p+   \, \frac{p-1}{p} \int_{B_{\rho_{0}}}  M(\mathbbm{1}_{B_{\rho_{0}}} w)^p_{m/\kappa}\, d\mu. 
\end{align*}
We recall that $\ell$ was defined by $2^\ell(\rho_{0}-r_{0})= R$. As $\kappa>1$, we have obtained $$ \varphi(r_{0}) \lesssim  \mu(B)C_{d}^{p\ell}    \ (\sup_{\sigma\ge 1} \fint_{\sigma B} w\, d\mu)^p + \epsilon_{p}\varphi(\rho_{0}).$$ From there, we do as in \emph{Step~7} an iteration provided $p-1$ is small and finally let $m\to \infty$ to deduce \eqref{eq:vwp}. 
\end{proof}

Having this theorem at hand, we can proceed as in \cite{AHT2017} and show the equality of the class \vw with 
other classes.    We say that a weight is a  \emph{very weak $\mathcal{A}_{\infty}$  weight},    if there exist an exponent  $1<p<\infty$ and a constant $C$ such that   for all balls $B$ and Borel subsets $E$ of $B$, 
\begin{equation}
\label{eq:vwAp}
 0 <  \inf_{\sigma\ge 1} \frac{w(E)}{w(\sigma B)}\frac{\mu(\sigma B)}{\mu(B)} \le C \bigg(\frac{\mu(E)}{\mu(B)}\bigg)^{1/p}.
\end{equation}
We call \vwp this class. 
We say that a weight $w$ is a \emph{very weak reverse H\"older weight} if there exist an exponent  $1<q<\infty$ and a constant  $C<\infty$ such that   for all balls $B$,
\begin{equation}
\label{eq:rhp}
\bigg(\fint_{B} w^q \, d\mu\bigg)^{1/q}  \le C  \sup_{\sigma\ge 1} \fint_{\sigma B} w\, d\mu < \infty.
\end{equation}
We call \rhq this class.

\begin{theorem} Let $w$ be a weight and $B$ be a ball of $X$. The  condition \eqref{eq:vw},  \eqref{eq:vwAp} for some $p\in (1,\infty)$ and \eqref{eq:rhp} for some $q\in (1,\infty)$ are equivalent (with different constants). In particular, we have
coincidence of \vw,  \vwp and \rhq. 
 \end{theorem}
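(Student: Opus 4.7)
My plan is to prove the three-way equivalence on the fixed ball $B$ by establishing $\eqref{eq:vw} \Leftrightarrow \eqref{eq:rhp} \Leftrightarrow \eqref{eq:vwAp}$. Throughout I write $S := \sup_{\sigma \ge 1} \fint_{\sigma B} w \, d\mu$, which is finite by the standing assumption in each of the three conditions. The cornerstone is the self-improvement from Theorem~\ref{thm:ainfty}; combined with standard facts about the maximal operator it will deliver $\eqref{eq:vw} \Leftrightarrow \eqref{eq:rhp}$. The remaining implications involving $\eqref{eq:vwAp}$ will be handled with H\"older's inequality and a routine layer-cake bootstrap.

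For $\eqref{eq:vw} \Rightarrow \eqref{eq:rhp}$ I would apply Theorem~\ref{thm:ainfty} on $B$ to obtain $p>1$ with $\big(\fint_B M(\mathbbm{1}_B w)^p \, d\mu\big)^{1/p} \lesssim S$; since $w \le M(\mathbbm{1}_B w)$ $\mu$-a.e.\ on $B$ by Lebesgue differentiation in spaces of homogeneous type, this is already $\eqref{eq:rhp}$ with $q = p$. Conversely, from $\eqref{eq:rhp}$ the $L^q$-boundedness of the Hardy--Littlewood maximal operator (Theorem~3.13 of \cite{BB2011}) followed by Jensen's inequality yields
\[
\fint_B M(\mathbbm{1}_B w) \, d\mu \le \Big(\fint_B M(\mathbbm{1}_B w)^q \, d\mu\Big)^{1/q} \lesssim \Big(\fint_B w^q \, d\mu\Big)^{1/q} \le C S,
\]
which is $\eqref{eq:vw}$.

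To incorporate $\eqref{eq:vwAp}$ into the equivalence I will close the triangle by proving $\eqref{eq:rhp} \Leftrightarrow \eqref{eq:vwAp}$. For the easy direction, H\"older's inequality on $B$ gives $w(E) \le \big(\int_B w^q \, d\mu\big)^{1/q} \mu(E)^{1/q'} \lesssim S\, \mu(B)^{1/q} \mu(E)^{1/q'}$ for every Borel $E \subset B$; rearranging and using $S^{-1} = \inf_{\sigma \ge 1} \mu(\sigma B)/w(\sigma B)$ produces $\eqref{eq:vwAp}$ with $p = q'$. For the reverse implication $\eqref{eq:vwAp} \Rightarrow \eqref{eq:rhp}$, which will be the main obstacle, I would first rewrite $\eqref{eq:vwAp}$ in the equivalent absolute form $w(E) \lesssim S\, \mu(B)^{1-1/p} \mu(E)^{1/p}$ valid for all Borel $E \subset B$, apply it to the level set $E_\lambda := \{w > \lambda\} \cap B$, and combine with Chebyshev's inequality $\lambda \mu(E_\lambda) \le w(E_\lambda)$ to obtain the weak-type bound $\mu(E_\lambda) \lesssim (S/\lambda)^{p'} \mu(B)$. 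A standard layer-cake decomposition
\[
\int_B w^s \, d\mu = s \int_0^{\lambda_0} \lambda^{s-1} \mu(E_\lambda) \, d\lambda + s \int_{\lambda_0}^\infty \lambda^{s-1} \mu(E_\lambda) \, d\lambda,
\]
with cut-off $\lambda_0 \sim S$ chosen to balance the two contributions, then upgrades this weak-type bound to $\big(\fint_B w^s \, d\mu\big)^{1/s} \lesssim_s S$ for every $s \in (1, p')$; this is $\eqref{eq:rhp}$ for any such $q = s$.

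The hard part will be the weak-to-strong bootstrap in $\eqref{eq:vwAp} \Rightarrow \eqref{eq:rhp}$, but it is of exactly the same flavour as Kolmogorov's inequality and requires no idea beyond the correct choice of $\lambda_0$. All other steps reduce to one-line consequences of previously established results.
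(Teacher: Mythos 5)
Your proposal is correct, and it implements exactly the cycle that the paper has in mind: the paper's one-line ``proof'' points the reader to Lemma~8.2 of \cite{AHT2017} with Theorem~\ref{thm:ainfty} as the key self-improving input, and your argument is a self-contained rendering of that strategy (use Theorem~\ref{thm:ainfty} plus Lebesgue differentiation for $\eqref{eq:vw}\Rightarrow\eqref{eq:rhp}$; $L^q$-boundedness of $M$ plus Jensen for the converse; H\"older for $\eqref{eq:rhp}\Rightarrow\eqref{eq:vwAp}$; Chebyshev/layer-cake bootstrap for $\eqref{eq:vwAp}\Rightarrow\eqref{eq:rhp}$). All four implications go through as you wrote them; in particular your identification $\inf_{\sigma\ge1}\frac{w(E)}{w(\sigma B)}\frac{\mu(\sigma B)}{\mu(B)}=\frac{w(E)}{\mu(B)S}$ is the right way to reduce $\eqref{eq:vwAp}$ to the absolute form, and the exponent bookkeeping ($p=q'$ one way, $q\in(1,p')$ the other) is accurate.

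Two small points worth flagging. First, when you ``apply Theorem~\ref{thm:ainfty} on $B$'' you are implicitly using Remark~\ref{remark:weights}: the theorem as stated assumes \eqref{eq:vw} for all balls, whereas the claim is ball-by-ball, and it is the remark that licenses the localized use. A one-line citation of that remark would make the step airtight. Second, in the step $\eqref{eq:rhp}\Rightarrow\eqref{eq:vw}$ you apply Jensen first and then the $L^q$-bound for $M$ (not the other way round as the prose order suggests), but the displayed chain of inequalities is in the right order, so this is only a cosmetic inversion in the narrative.
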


\begin{proof}
Adapt the proof of Lemma 8.2 in \cite{AHT2017} together with our Theorem~\ref{thm:ainfty} {as the proper replacement for Theorem~5.6 therein}. 
\end{proof}

\subsection{\texorpdfstring{$C_p$}{Cp} weights}
\label{sec:cp}

Let $X = \mathbb{R}^{n}$ equipped with Euclidean distance and Lebesgue measure and write $|E|$ for the Lebesgue measure of a set $E$. Fix a weight $w$. Upon replacing the supremum $\sup_{\sigma > 1} w(\sigma B)/|\sigma B|$ by the tail functional
\begin{align*}
a_{C_p}(B) &:= \frac{1}{|B|} \int_{\mathbb{R}^{n}} M( \mathbbm{1}_{B})^{p} w\, dx \eqsim \fint_{B} w \, dx +  \frac{1}{|B|}\sum_{k = 1}^{\infty}  \int_{2^{k + 1}B \setminus 2^{k} B}  \left( \frac{|B|}{|2^{k} B| } \right)^{p} w\, dx  \\
&\eqsim \sum_{k=1}^{\infty} 2^{-kn(p-1)} \fint_{2^{k}B} w \, dx  
\end{align*}
with $ 1<p<\infty$ in the definition of \vwp in \eqref{eq:vwp}, we recover the $C_p$ condition of Muckenhoupt \cite{Muckenhoupt1981} and Sawyer \cite{Sawyer1983}. Namely, we say that $w \in C_p$ if there are $\delta > 0$ and $C > 0$ so that
\begin{equation}
\label{eq:cp}
w(E)  \leq  C \left( \frac{| E |}{|B|} \right)^{\delta} \int_{\mathbb{R}^{n}}  M( \mathbbm{1}_{B})^{p} w\, dx < \infty
\end{equation}
holds for all balls $B$ and measurable $E \subset B$. 

Following the proof of Lemma 8.2 in \cite{AHT2017}, we see that $w \in C_p$ if and only if
there are $\delta' > 0$ and $C > 0$ such that  for all balls $B$,
\[ \left( \fint_{B} w^{1+ \delta'} \, dx \right)^{1/(1+\delta')} \leq C a_{C_p}(B) < \infty.\]
 Modifying the proof of Theorem \ref{thm:ainfty} (see Remark \ref{remark:weights}), one can append 
\[\fint_{B} M(\mathbbm{1}_{B}w) \, dx \leq C a_{C_p}(B) < \infty \]
holding for some $C > 0$ and all balls $B$ to the list of equivalent definitions of the $C_{p}$ class. In conclusion, the class $C_p$ gives examples of functions satisfying a reverse H\"older inequality with tail as in Theorem \ref{GeneralGehering.cor}. Conversely, as we prove next a reverse H\"older inequality with a tail of the form $a_{C_p}(B)$ for fractional derivatives of solutions to fractional divergence form equations, we see that solutions produce examples of $C_p$ weights.

\section{Application to a fractional divergence form equation}\label{sec:appl} 

Throughout this section let $\alpha \in (0,1)$. We illustrate our main results by applying them to solutions $u$ on $\RR^n$, $n \geq 2$, to the fractional divergence form equation of order $2\alpha$ introduced by Shieh--Spector~\cite{ShiehFractional}. This equation is formally given by
\begin{align}
\label{eq:fractional_equation}
 (D^\alpha)^* (A D^\alpha u) = (D^\alpha)^* F + f,
\end{align}
where $D^\alpha$ is the \emph{Riesz fractional gradient} defined for $u \in L^2(\RR^n)$ as the $\CC^n$-valued tempered distribution with $\RR^n$-valued Fourier symbol $\xi/|\xi|^{1-\alpha}$, that is to say, for $\xi \in \RR^n$ we have
\begin{align*}
 \mathcal{F} (D^\alpha u)(\xi) = \frac{\xi}{|\xi|^{1-\alpha}} \mathcal{F} u(\xi).
\end{align*}
Throughout, we use the normalization 
\begin{align*}
 \mathcal{F}u(\xi) := \frac{1}{(2 \pi)^{n/2}} \int_{\mathbb{R}^{n}} u(x) e^{- i \xi \cdot x} \, dx
\end{align*}
for the Fourier transform. Note that for $\alpha = 1$ we would recover the classical divergence form structure with $D^1 = \nabla$ and adjoint $(D^1)^* = - \mathrm{div}$. Also note that $F$ in \eqref{eq:fractional_equation} is $\CC^n$-valued whereas $f$ is scalar valued, but since there is no danger of confusion we shall not use different notation for vector valued functions. As for the coefficients, we assume that $A: \RR^n \to \CC^{n \times n}$ is bounded, measurable, and that there exists $\lambda > 0$ such that for all $x \in \RR^n$ and all $\xi \in \CC^n$,
\begin{align}
\label{eq:ellipticity}
 \lambda |\xi|^2 \leq \Re(A(x)\xi \cdot \cl{\xi}) \leq \lambda^{-1} |\xi|^2.
\end{align}

As in \cite{ShiehFractional, SchikorraVMO}, we study \eqref{eq:fractional_equation} in a global variational framework using the Hilbert space $H^{\alpha,2} = H^{\alpha,2}(\RR^n)$ that consists of all $u \in L^2(\RR^n)$ such that $|D^\alpha u| \in L^2(\RR^n)$ and that is endowed with the natural norm $u \mapsto (\|u\|_{L^2}^2 + \|D^\alpha u\|_{L^2}^2)^{1/2}$. Since $\xi/|\xi|^{1-\alpha}$ is comparable to $|\xi|^\alpha$ is Euclidean norm, $H^{\alpha,2}$ coincides up to equivalent norms with the Bessel potential space that is usually denoted by the same symbol. Fractional Sobolev embedding theorems give us two important indices for every $p \in (1,\infty)$:
\begin{align*}
 p^{*} := \frac{pn}{n-\alpha p} \qquad \textrm{and} \qquad p_{*}  := \frac{pn}{n+ \alpha p}, 
\end{align*} 
that satisfy $1/p^{*} + 1/p_{*} = 2/p$ and $(p_*)^* = (p^*)_* = p$. The value of $\alpha$ in the definition of $p_*$ and $p^*$ will usually be clear from the context. Otherwise it will be given explicitly as $p_{*,\alpha}$ or $p^*_{\alpha}$. In particular, $H^{\alpha,2}$ embeds continuously into $L^{2^*} = (L^{2_*})'$, see Theorem~1.2.4 in \cite{Adams-Hedberg}.

\begin{definition}
\label{def:solution}
Let $F \in L^2$ and $f \in L^{2_*}$. A function $u \in H^{\alpha ,2}$ is called \emph{weak solution} to \eqref{eq:fractional_equation} if for all $\varphi \in H^{\alpha,2}$,
\begin{align*}
 \int_{\mathbb{R}^{n}} A(x) D^\alpha u(x) \cdot \cl{D^\alpha \varphi(x)}\, dx = \int_{\mathbb{R}^{n}}  F(x) \cdot \cl{D^\alpha \varphi(x)} +  f(x)\cl{\varphi(x)} \, dx.
\end{align*}
\end{definition}

Our goal is to prove that weak solutions to \eqref{eq:fractional_equation} exhibit locally higher integrability of their derivatives of order $\alpha$. There are at least two legitimate choices for defining such fractional derivatives: One is the Riesz fractional gradient $D^\alpha u$, the other one the \emph{fractional Laplacian} $(-\Delta)^{\alpha/2}u$ that we introduce for $u \in H^{\alpha,2}$ through the Fourier symbol $|\xi|^\alpha$ or, equivalently, through the singular integral representation for almost every $x\in \RR^n$,
\begin{align}
\label{eq:fractional_Laplacian}
 ( - \Delta)^{\alpha/2} u (x) = c_{n,\alpha} \int_{\RR^{n}} \frac{u(x) - u(y)}{|x-y|^{n+ \alpha} } \, dy,
\end{align}
where integral is understood in the principal value sense~\cite{TenFractionalLaplacians, Stein}. Our main result then reads as follows.

\begin{theorem}
\label{thm:fractional}
Let $u \in H^{\alpha, 2}$ and $p > 2$. Suppose $u$ is a weak solution to \eqref{eq:fractional_equation}, where $f \in L^{2_{\ast}} \cap L_{loc}^{p_{\ast}} $ and $F \in L^{2} \cap L_{loc}^{p}$. Then there exists $\epsilon_0 = \epsilon_0(\lambda, n, \alpha,p) >  0$ such that
\begin{enumerate}
 \item local higher integrability $|(- \Delta)^{\alpha/2} u| + |D^\alpha u| \in L_{loc}^{2+ \epsilon_0}$ holds,
 \item global integrability $f \in L^{2_\ast} \cap L^{p_\ast} $ and $F \in L^{2} \cap L^{p}$, implies global higher integrability $|(- \Delta)^{\alpha/2} u| + |D^\alpha u| \in L^{2+ \epsilon_0}$.
\end{enumerate}
\end{theorem}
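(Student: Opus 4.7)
The plan is to reduce Theorem \ref{thm:fractional} to the non-local Gehring lemma with volume lower bound (Theorem \ref{homers.4.3.thrm}) for the local part (i), and to its global counterpart (Theorem \ref{thm:global_volumelowerbound}) for the global part (ii), both applied on $(\RR^n, |\cdot|, dx)$ with $Q = n$. I will work with $w := |(-\Delta)^{\alpha/2} u|$ throughout, as it is algebraically more convenient than $|D^\alpha u|$ for the ellipticity argument. At the very end, the equivalence $D^\alpha = R \, (-\Delta)^{\alpha/2}$, where $R$ is a vector of Riesz transforms, transfers the higher integrability from $w$ to $|D^\alpha u|$: globally this is immediate from $L^p$-boundedness of $R$ for $1 < p < \infty$, while locally one uses a standard Calder\'on--Zygmund splitting whose non-local part only contributes a harmless $L^2$-tail of $w$.

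\textbf{Fractional Caccioppoli with tails.} Fix a ball $B = B(x_0, R)$ and a cutoff $\eta \in C_c^\infty(\RR^n)$ with $\mathbbm{1}_B \le \eta \le \mathbbm{1}_{2B}$. Testing Definition \ref{def:solution} against $\varphi := \eta^2 (u - (u)_{2B})$, using ellipticity \eqref{eq:ellipticity} to absorb the principal part, and carefully dissecting the commutator $[(-\Delta)^{\alpha/2}, \eta^2]$ via the singular integral representation \eqref{eq:fractional_Laplacian}, produces contributions supported on dyadic annuli $2^{k+1}B \setminus 2^k B$ whose coefficients decay like $2^{-\alpha k}$, reflecting the order $\alpha$ of the operator. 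After standard Young-type absorptions, one obtains
\begin{equation*}
  \int_B w^2 \, dx \lesssim \sum_{k \geq 0} 2^{-\alpha k} R^{-2\alpha} \int_{2^k B} |u - (u)_{2B}|^2 \, dx + \int_{2B} |F|^2 \, dx + \int_{2B} f \, (u - (u)_{2B}) \, dx.
\end{equation*}

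\textbf{Sobolev--Poincar\'e and Gehring.} The fractional Sobolev embedding $H^{\alpha,2} \hookrightarrow L^{2^*}$ together with a standard Poincar\'e-type argument on balls yields an estimate of the form
\begin{equation*}
  \left(\fint_{2^k B} |u - (u)_{2B}|^{2^*} \, dx\right)^{1/2^*} \lesssim (2^k R)^\alpha \sum_{j \geq 0} 2^{-\alpha j} \left(\fint_{2^{k+j} B} w^{2_*} \, dx\right)^{1/2_*},
\end{equation*}
with $2_* = 2n/(n+2\alpha)$. Substituting into the Caccioppoli estimate, using H\"older on the $f$-term with the pairing $L^{2_*} \times L^{2^*}$, and normalising by $|B|$, one arrives at the reverse H\"older inequality with tails
\begin{equation*}
  \left( \fint_B w^2 \, dx \right)^{1/2} \lesssim \sum_{k \geq 0} 2^{-\alpha k} \left(\fint_{2^k B} w^{2_*} \, dx \right)^{1/2_*} + \left(\fint_{2B} |F|^2 \, dx \right)^{1/2} + R^\alpha \left(\fint_{2B} |f|^{2_*} \, dx\right)^{1/2_*}.
\end{equation*}
Choosing $q = 2$, $s = 2_*$, $\beta = \alpha$, one verifies $\beta = n(1/s - 1/q)$, which places us in the endpoint regime of both Theorem \ref{GeneralGehering.cor} and Theorem \ref{homers.4.3.thrm}. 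Applying the latter yields $\epsilon_0 = \epsilon_0(\lambda, n, \alpha) > 0$ with $w \in L^{2+\epsilon_0}_{loc}$, and more generally $w \in L^p_{loc}$ for any $p \in (2, 2+\epsilon_0]$ provided $F \in L^p_{loc}$ and $f \in L^{p_*}_{loc}$, where $p_* = pn/(n+\alpha p)$ is precisely the Gehring exponent for $h$. Claim (ii) follows in the same manner from Theorem \ref{thm:global_volumelowerbound} under the global integrability hypotheses on $F$ and $f$, and in both cases one passes from $w$ to $|D^\alpha u|$ as explained above.

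\textbf{Main obstacle.} The technical heart is the Caccioppoli step: unlike the classical local case, $(-\Delta)^{\alpha/2}(\eta^2 (u-c))$ does not vanish outside $2B$, and the commutator produces genuinely non-local bilinear remainders that must be organised into a geometrically summable dyadic series with the exact decay $2^{-\alpha k}$ (anything slower would violate the summability $\alpha = \sum_k \alpha_k < \infty$ required by Theorem \ref{homers.4.3.thrm}). Analogous commutator manipulations for fractional equations have been carried out in \cite{KMS2015}, and the computations needed here follow the same pattern, adapted to the divergence form structure of Shieh--Spector.
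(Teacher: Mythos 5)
Your Caccioppoli step has a genuine gap that the paper's argument is designed precisely to avoid. The weak formulation in Definition~\ref{def:solution} pairs $AD^\alpha u$ against $\cl{D^\alpha \varphi}$, so testing with $\varphi = \eta^2(u - (u)_{2B})$ and invoking ellipticity \eqref{eq:ellipticity} gives control of $\int \eta^2 |D^\alpha u|^2$, \emph{not} of $\int \eta^2 |(-\Delta)^{\alpha/2}u|^2$. Your statement that $w = |(-\Delta)^{\alpha/2}u|$ is ``algebraically more convenient for the ellipticity argument'' is backwards: the quadratic form sees $D^\alpha u$, and these two quantities are only comparable globally via the Riesz transform, not locally. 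Moreover, the commutator that actually appears is not $[(-\Delta)^{\alpha/2}, \eta^2]$ but the error in splitting $D^\alpha(\eta^2 \widetilde u)$, where $D^\alpha = R(-\Delta)^{\alpha/2}$ wraps a Riesz transform around the fractional Laplacian (equivalently $D^\alpha = \nabla I_{1-\alpha}$); neither factorization yields the clean singular-integral commutator dissection of \cite{KMS2015}, because that paper treats a single integro-differential operator, not the composed divergence structure $(D^\alpha)^* A D^\alpha$ of Shieh--Spector. In short, you have implicitly replaced the equation under study by a different one.

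The paper circumvents all of this with a potential trick that you should compare with. Set $v := I_{1-\alpha}u - \fint_{2B} I_{1-\alpha}u$, so that $\nabla v = D^\alpha u$ exactly, and test the equation with $(-\Delta)^{(1-\alpha)/2}(v\varphi^2) \in H^{\alpha,2}$. By Fourier symbols, $D^\alpha$ of this test function equals $\nabla(v\varphi^2)$, so the principal term becomes $\int A\nabla v \cdot \cl{\nabla(v\varphi^2)}$ and the \emph{classical} Leibniz rule plus the ordinary Sobolev--Poincar\'e inequality for $v$ apply — no fractional Leibniz or commutator estimates are needed on that term, and the crude $\rho$-average of $|D^\alpha u| = |\nabla v|$ on $2B$ appears directly (Proposition~\ref{prop:RH_RieszGradient}). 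The only genuinely non-local contribution is the source term $\int f\, \cl{(-\Delta)^{(1-\alpha)/2}(v\varphi^2)}$, which is controlled by a dyadic annular decomposition giving the $2^{-k(n+1-\alpha)}$ decay. The quantity $|(-\Delta)^{\alpha/2}u|$ is then added to the reverse H\"older inequality by a separate real-variable Riesz transform lemma (Lemma~\ref{lem:localRiesz}), which produces the extra $L^1$-tail $\sum_k 2^{-k}\fint_{2^k B}|D^\alpha u|$. This is the opposite transfer direction from what you propose, and it is non-trivial: it requires a pointwise estimate on $R^*(\mathbbm{1}_{(2B)^c}D^\alpha u)$ via the mean value theorem plus Tchebychev and weak-$(1,1)$ to pick a good auxiliary point $z$, not merely ``a standard Calder\'on--Zygmund splitting.''

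Two secondary points. Your Sobolev--Poincar\'e-with-tails estimate for $u-(u)_{2B}$ in terms of dyadic averages of $w^{2_*}$ is also non-standard and would need its own proof; the paper instead uses classical Sobolev--Poincar\'e on $v$ in the compact exponent range $\rho \in (2_{*,1},2)$. Finally, your choice of Theorem~\ref{homers.4.3.thrm} in place of Theorem~\ref{GeneralGehering.cor} is legitimate on $\RR^n$ at the endpoint $\beta = n(1/s-1/q)$, but note that both theorems require the $u$-tail $a_u(B)$ at exponent $1$ in \eqref{eq:quasi1}, so you would still need the paper's rescaling step $\widetilde v := v^\rho$ (or $\widetilde w := w^{2_*}$ in your normalization) before either can be invoked.
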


Below, we shall prove this result by first establishing new reverse H\"older inequalities with tails for solutions to \eqref{eq:fractional_equation} and then applying the non-local Gehring lemmas from Theorem \ref{GeneralGehering.cor} and Theorem \ref{thm:global1}. As a consequence, there are in fact quantitative bounds that substantiate assertion (1) and (2) above. We shall write them out in \eqref{eq:RHbound_solution} at the end of this section.

We remark that for equations with real $VMO$-coefficients $A$ it was shown in \cite{SchikorraVMO} by different techniques that global $p$-integrability of $F$ leads to local $p$-integrability of $D^\alpha u$ for any $p>2$. The local higher integrability of $D^\alpha u$ in (1) should therefore be seen as the counterpart of that result for equations with merely measurable coefficients. Although $(-\Delta)^{\alpha/2}u$ is comparable to $D^\alpha u$ only on the global level using Fourier multipliers (the Riesz transform), we obtain local self-improvement for the former quantity, too. This, as well as the global results (2), seem to be novel for fractional divergence form equations even in the setting of \cite{Schikorra2016}. 

\subsection{Reverse H\"older estimates for the Riesz fractional gradient}

We begin with a reverse H\"older estimate for the Riesz fractional gradient of solutions. Surprisingly, this estimate is still local in $u$ and $F$.

\begin{proposition}
\label{prop:RH_RieszGradient}
Let $u \in H^{\alpha,2}$ be a weak solution to \eqref{eq:fractional_equation}, where $f \in L^{2_*}$ and $F \in L^2$. Let $\rho \in (2_{*,1},2)$. Then for all balls $B = B(x,r) \subset \RR^n$,
\begin{align*}
 \bigg(\fint_B |D^\alpha u|^2 \bigg)^{1/2}
&\lesssim \bigg(\fint_{2B} |D^\alpha u|^\rho \bigg)^{1/\rho} + \bigg(\fint_{2B} |F|^2 \bigg)^{1/2}\\
&\quad + r^\alpha \bigg(\sum_{k=0}^\infty  2^{-k(1-\alpha)} \fint_{2^k B} |f|^{2_{*,\alpha}}\bigg)^{1/2_{*,\alpha}},
\end{align*}
with an implicit constant depending on $n, \alpha, \lambda, \rho$.
\end{proposition}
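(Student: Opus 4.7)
The argument follows the classical Caccioppoli $+$ Sobolev--Poincar\'e strategy adapted to the fractional setting. First, I would remove the scalar source $f$ by absorbing it into the vector datum. Since $(D^\alpha)^*$ has Fourier symbol $\xi/|\xi|^{1-\alpha}$ of order $\alpha$, the $\CC^n$-valued Riesz-type potential
\[
G(x) := c_{n,\alpha}\int_{\RR^n}\frac{x-y}{|x-y|^{n+1-\alpha}}\,f(y)\,dy
\]
satisfies $(D^\alpha)^* G = f$ distributionally, and by Hardy--Littlewood--Sobolev one has $G\in L^2(\RR^n)$ whenever $f\in L^{2_{*,\alpha}}$. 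A dyadic decomposition of the defining integral over the annuli $2^k B\setminus 2^{k-1}B$, using $|x-y|\sim 2^k r$ for $x\in 2B$ and $y\in 2^k B\setminus 2^{k-1}B$, gives
\[
\bigg(\fint_{2B}|G|^2\bigg)^{1/2}\lesssim r^\alpha \bigg(\sum_{k=0}^\infty 2^{-k(1-\alpha)}\fint_{2^k B}|f|^{2_{*,\alpha}}\bigg)^{1/2_{*,\alpha}},
\]
which is exactly the tail appearing in the statement. Replacing $F$ by $F+G$ reduces the problem to a weak solution with vanishing scalar source.

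Second, I would prove a Caccioppoli-type inequality. Fix $\eta\in C_c^\infty(\RR^n)$ with $\eta\equiv 1$ on $B$, $\supp\eta\subset \tfrac32 B$, and $\|\nabla\eta\|_\infty\lesssim 1/r$, set $c:=\fint_{2B} u$, and test the reduced equation with $\varphi=\eta^2(u-c)$. Splitting
\[
D^\alpha(\eta^2(u-c)) = \eta^2 D^\alpha u + [D^\alpha,\eta^2](u-c),
\]
and using \eqref{eq:ellipticity} together with Cauchy--Schwarz and absorption yields
\[
\int_B|D^\alpha u|^2 \lesssim \int_{2B}\bigl(|F|^2+|G|^2\bigr) + \bigl\|[D^\alpha,\eta^2](u-c)\bigr\|_{L^2(\RR^n)}^2.
\]
The commutator is estimated pointwise via the singular integral representation of $D^\alpha$ by
\[
\bigl|[D^\alpha,\eta^2](u-c)(x)\bigr| \lesssim \int_{\RR^n}\frac{|\eta^2(x)-\eta^2(y)|}{|x-y|^{n+1-\alpha}}\,|u(y)-c|\,dy,
\]
and splitting into $|x-y|\le r$ (where $|\eta^2(x)-\eta^2(y)|\lesssim |x-y|/r$) and $|x-y|>r$ (where $|\eta^2(x)-\eta^2(y)|\le 1$ and the compact support of $\eta^2$ is decisive) leads, after standard fractional-integral bookkeeping, to
\[
\bigl\|[D^\alpha,\eta^2](u-c)\bigr\|_{L^2(\RR^n)} \lesssim r^{-\alpha}\,\|u-c\|_{L^2(2B)}.
\]

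Third, invoke a fractional Sobolev--Poincar\'e inequality for the Riesz fractional gradient,
\[
\bigg(\fint_{2B}|u-c|^2\bigg)^{1/2}\lesssim r^\alpha \bigg(\fint_{2B}|D^\alpha u|^\rho\bigg)^{1/\rho}, \qquad \rho\in(2_{*,1},2),
\]
which is obtained from the classical Sobolev--Poincar\'e inequality via the identity $D^\alpha = -i\nabla\circ(-\Delta)^{-(1-\alpha)/2}$ and the standard $L^p$-boundedness of Riesz transforms and potentials. Combining with Step~2 and dividing by $|B|$ yields the claim.

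The main obstacle is the commutator estimate in Step~2. Since $D^\alpha$ is nonlocal, the naive pointwise bound produces contributions from $u(y)$ at $y$ arbitrarily far from $B$, and, since there is no tail-in-$u$ term in the conclusion, one must show that all such contributions are absorbed into the local $L^2(2B)$-norm of $u-c$. The compact support of $\eta^2$ in $\tfrac32 B$ ensures that in the far regime only $y\in\tfrac32 B$ contributes, and the decay $|x-y|^{-(n+1-\alpha)}$ --- integrable at infinity precisely because $\alpha<1$ --- produces the correct scaling $r^{-\alpha}\|u-c\|_{L^2(2B)}$. This cancellation of the residual tail in $u$ is the technical heart of the proof.
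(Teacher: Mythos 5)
Your proposal takes a genuinely different route from the paper, but both of its two key steps contain gaps that prevent the argument from closing.

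\textbf{Step 1 fails.} You write $f=(D^\alpha)^*G$ with $G= cRI_\alpha f$ (so $\widehat G(\xi)=\xi\widehat f(\xi)/|\xi|^{1+\alpha}$, kernel of size $|x-y|^{-(n-\alpha)}$) and then claim
\begin{equation*}
\bigg(\fint_{2B}|G|^2\bigg)^{1/2}\lesssim r^\alpha\bigg(\sum_{k\geq0}2^{-k(1-\alpha)}\fint_{2^kB}|f|^{2_{*,\alpha}}\bigg)^{1/2_{*,\alpha}}.
\end{equation*}
This inequality is false. For $x\in 2B$ and $y\in 2^{k+1}B\setminus 2^kB$ with $k\geq 2$ one has $|x-y|\sim 2^kr$, so the far-field contribution to $G$ on $2B$ is
\begin{equation*}
\sum_{k\geq 2}\frac{1}{(2^kr)^{n-\alpha}}\int_{2^{k+1}B\setminus 2^kB}|f|\ \sim\ \sum_{k\geq 2}(2^kr)^\alpha\fint_{2^kB}|f|,
\end{equation*}
i.e.\ the dyadic weight in front of the averages of $f$ \emph{grows} like $2^{k\alpha}$, while your claimed tail \emph{decays} like $2^{-k(1-\alpha)}$. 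Because $I_\alpha$ smooths with a slowly decaying kernel, the potential $G$ genuinely feels the far-field of $f$ with non-negligible weight, and its local $L^2$-average cannot be controlled by a summable tail of the kind required. Since the tail functional in Theorem~\ref{GeneralGehering.cor} requires $\sum_k\alpha_k<\infty$, this breakdown is fatal even as a matter of form, not just of constants.

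\textbf{Step 2 also fails.} The kernel of $D^\alpha$ is $\sim (x-y)/|x-y|^{n+1+\alpha}$, not $|x-y|^{n+1-\alpha}$ as you wrote; but the more serious issue is the claimed commutator bound
$\|[D^\alpha,\eta^2](u-c)\|_{L^2}\lesssim r^{-\alpha}\|u-c\|_{L^2(2B)}$.
The function $u-c$ is \emph{not} compactly supported, and the compact support of $\eta^2$ does not cut off its far-field: for $x\in B$ (where $\eta^2\equiv1$), one has $[D^\alpha,\eta^2](u-c)(x)=D^\alpha\big((\eta^2-1)(u-c)\big)(x)$, and the factor $\eta^2-1$ equals $-1$ on $(\tfrac32B)^c$. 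Thus the commutator receives contributions from $(u-c)(y)$ at all distances, weighted by $|x-y|^{-(n+\alpha)}$, i.e.\ a geometrically decaying but non-vanishing tail in $u-c$. That tail cannot be absorbed into $\|u-c\|_{L^2(2B)}$, and after the Sobolev--Poincar\'e step it becomes a tail in $D^\alpha u$ --- which simply does not appear on the right-hand side of the Proposition. The ``cancellation of the residual tail'' you invoke at the end does not take place.

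For comparison, the paper's proof avoids both difficulties by a different choice of test function. Setting $v:=I_{1-\alpha}u-\fint_{2B}I_{1-\alpha}u$, so that $\nabla v=D^\alpha u$, one tests the equation with $(-\Delta)^{(1-\alpha)/2}(v\varphi^2)$ and uses the Fourier identity $D^\alpha(-\Delta)^{(1-\alpha)/2}(v\varphi^2)=\nabla(v\varphi^2)$, which is \emph{supported in $2B$}. This eliminates the commutator altogether and keeps every $u$-related term local. The nonlocality then lives exclusively in the pairing $\int f\,\overline{(-\Delta)^{(1-\alpha)/2}(v\varphi^2)}$, where the fractional Laplacian acts on a compactly supported function; viewed from $x$ at distance $\sim 2^kr$, the integral representation \eqref{eq:fractional_Laplacian} gives decay $|x|^{-(n+1-\alpha)}$, yielding precisely the $2^{-k(1-\alpha)}$ tail of the statement. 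Your reduction to $(D^\alpha)^*G$ discards this compact-support structure before the kernel decay can be exploited, which is where the argument goes wrong.
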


\begin{remark}
\label{rem:RH_RieszGradient}
The proof below will yield in fact a stronger version of the above estimate, see in particular \eqref{eq:RH_RieszGradient_Goal}: All but the averages for $k=0,1$ of $f$ can be taken in $L^1$ instead of $L^{2_{*,\alpha}}$ and $\rho=2_{*,1}$ is admissible in all dimensions but $n =2$. For the applications we shall not need such precision.
\end{remark}

For the proof we need to recall the notions of \emph{Riesz potential} and \emph{Riesz transform}. The potential $I_s$ of order $s \in (0,1)$ corresponds to the Fourier symbol $|\xi|^{-s}$. For $g \in L^p$, $p \in (1, n/s)$, we have $I_s g \in L^{p^*_s}$, a bounded operator $I_s: L^p \to L^{p^*_s}$ and an absolutely convergent representation 
\begin{align*}
 I_s g(x) = c_s \int_{\RR^n} \frac{g(y)}{|x-y|^{n-s}} \, dy
\end{align*}
for almost every $x \in \RR^n$. For $v \in H^{s,2}$ we have $(-\Delta)^{s/2} I_s v = v = I_s (-\Delta)^{s/2} v$. We refer to Section V.1 in \cite{Stein} for all these properties. We shall also use the well known \emph{Riesz transform} $R$ and its adjoint $R^*$ corresponding to the symbols $- i \xi/|\xi|$ and $i \xi^\top/|\xi|$, respectively. For $g \in L^p$ and $G \in (L^p)^n$, $p \in (1,\infty)$, they are given for almost every $x \in \RR^n$ by the principal value integrals
\begin{align}
\label{eq:RieszTransforms}
 R g (x) = c_n \int_{\RR^n} \frac{x-y}{|x-y|^{n+1}} g(y) \, dy \quad \text{and} \quad R^* G (x) = c_n \int_{\RR^n} \frac{x-y}{|x-y|^{n+1}} \cdot G(y) \, dy,
\end{align}
see Section III.1 in \cite{Stein}. For $u \in H^{\alpha,2}$ we can read off from the respective Fourier symbols the important relations
\begin{align}
\label{eq:relations_fractional_derivatives}
D^\alpha u = \nabla I_{1-\alpha} u = R (-\Delta)^{\alpha/2} u.
\end{align}

We are now ready to give the

\begin{proof}[Proof of Proposition~\ref{prop:RH_RieszGradient}]
We let $B$ be a ball of radius $1$ and pick an adapted cut-off function $\varphi \in C_0^\infty(2B)$ satisfying $0 \leq \varphi \leq 1$, $\varphi = 1$ on $B$ and $|\nabla \varphi| \lesssim 1$. We claim that it suffices to establish the bound
\begin{align}
\label{eq:RH_RieszGradient_Goal}
\begin{split}
\int_{\RR^n} |D^\alpha u|^2 \varphi^2 
&\lesssim \eps \int_{\RR^n} |D^\alpha u|^2 \varphi^2 + \eps^{-1} \bigg(\int_{2B} |D^\alpha u|^\rho \bigg)^{2/\rho} + \eps^{-1} \int_{2B} |F|^2  \\
&\quad+ \eps^{-1}\bigg(\int_{4B} |f|^{2_*}\bigg)^{2/2_*} + \bigg(\sum_{k=2}^\infty 2^{-k(n+1-\alpha)} \int_{2^k B} |f| \bigg)^2
\end{split}
\end{align}
for $\eps>0$ and an implicit constant depending on $n,\alpha, \lambda$. Indeed, upon choosing $\eps$ sufficiently small to absorb the first term on the right into the left-hand side and using the defining properties of $\varphi$, this implies the claim for balls of radius $1$ even in its stronger form alluded to in Remark~\ref{rem:RH_RieszGradient}. In order to pass to balls of arbitrary radius $r>0$, we apply the above to $x \mapsto u(xr)$, which is a solution to an equation of the same form as \eqref{eq:fractional_equation} with \emph{identical} ellipticity constants.

In the following, implicit constants will only depend on $n,\alpha, \lambda$ and we shall not mention this in every single step. In order to prove \eqref{eq:RH_RieszGradient_Goal} we start out with ellipticity of $A$, see \eqref{eq:ellipticity}, and write
\begin{align*}
 \int_{\RR^n} |D^\alpha u|^2 \varphi^2 \leq \Re \int_{\RR^n} A D^\alpha u \cdot \cl{(D^\alpha u)} \varphi^2.
\end{align*}
We introduce the potential $v := I_{1-\alpha} u - \fint_{2B} I_{1-\alpha} u$ with zero average on $2B$. Since $u \in L^2$ we have $I_{1-\alpha} u \in L^{2^*_{1-\alpha}} \subset L^2_{loc}$ and hence $v \in L^2_{loc}$. Moreover, $\nabla v = D^\alpha u \in L^2$ due to \eqref{eq:relations_fractional_derivatives}, which leads to
\begin{align*}
 \int_{\RR^n} |D^\alpha u|^2 \varphi^2 
\leq \Re \int_{\RR^n} A D^\alpha u \cdot \cl{\nabla(v \varphi^2)} - 2  \Re \int_{\RR^n} A D^\alpha u \cdot \cl{v \varphi \nabla \varphi} =: \Re(I) + \Re(II).
\end{align*}
As for $II$, we recall boundedness of $A$, $\varphi$, $\nabla \varphi$, apply Young's inequality with $\eps$ and then use the standard Sobolev-Poincar\'e inequality for functions with zero average on a ball (Corollary~8.1.4 in \cite{Adams-Hedberg}) to give
\begin{align*}
 |II| &\lesssim \eps \int_{\RR^n} |D^\alpha u|^2 \varphi^2 + \eps^{-1} \int_{2B} |v|^2 \\
 &\lesssim \eps \int_{\RR^n} |D^\alpha u|^2 \varphi^2 + \eps^{-1} \bigg(\int_{2B} |\nabla v|^{\rho} \bigg)^{2/\rho}.
\end{align*}
Since we have $\nabla v = D^\alpha u$, this is a desirable bound in view of \eqref{eq:RH_RieszGradient_Goal}.

We turn to $I$. Since $v \varphi^2 \in H^{1,2}$, we can use the Fourier transform to write $\nabla(v \varphi^2) = D^\alpha (-\Delta)^{(1-\alpha)/2}(v \varphi^2)$, which in turn allows us to bring into play the equation for $u$ with $(-\Delta)^{(1-\alpha)/2}(v \varphi^2) \in H^{\alpha,2}$ as a test function:
\begin{align*}
I = \int_{\RR^n} F \cdot \cl{\nabla(v \varphi^2)} + \int_{\RR^n} f \, \cl{(-\Delta)^{(1-\alpha)/2}(v \varphi^2)} =: I_1 + I_2.
\end{align*}
By the support properties of $\varphi$ and the relation $D^\alpha u = \nabla v$ we have
\begin{align*}
 I_1 = \int_{2B} F \cdot \cl{(D^\alpha u)} \varphi^2 + \int_{2B} 2 F \cdot \cl{v} \varphi \nabla \varphi.
\end{align*}
A desirable bound for the first integral on the right is obtained simply from Young's inequality with $\eps$, whereas for the second one we also invoke the Sobolev-Poincar\'e inequality for $v$ as in the treatment of $II$ above. This completes the handling $I_1$ and the only term remaining is $I_2$.

Let us further split $I_2$ into a local and a global piece
\begin{align*}
 I_2 = \int_{\RR^n} \mathbbm{1}_{4B} f \, \cl{(-\Delta)^{(1-\alpha)/2}(v \varphi^2)} + \int_{\RR^n} \mathbbm{1}_{(4B)^c} f \, \cl{(-\Delta)^{(1-\alpha)/2}(v \varphi^2)} =: I_{21} + I_{22}
\end{align*}
and treat the local piece first. Since $v \varphi^2 \in H^{1,2}$, we can use the Fourier transform to justify
\begin{align*}
 (-\Delta)^{(1-\alpha)/2}(v \varphi^2) = - I_\alpha R^* \nabla(v \varphi^2)
\end{align*}
and as the Fourier symbol of $I_\alpha$ is $\RR$-valued, we can rewrite
\begin{align*}
I_2 = - \int_{\RR^n}  R I_\alpha(\mathbbm{1}_{4B}f) \cdot \cl{\nabla(v \varphi^2)}.
\end{align*}
From $f \in L^{2_*}$ and the above-mentioned boundedness properties of $R$ and $I_\alpha$ we can infer $R I_\alpha(\mathbbm{1}_{4B}f) \in L^2$ with norm controlled by $\|f\|_{L^{2_*}(4B)}$. Hence, $I_2$ is of the exact same nature as $I_1$ and we obtain a desirable bound by the same reasoning as before upon replacing $F$ with $R I_\alpha(\mathbbm{1}_{4B}f)$.

Finally, we use the integral representation \eqref{eq:fractional_Laplacian} for $(-\Delta)^{(1-\alpha)/2}(v\varphi^2)(x)$ to treat the global piece $I_{22}$. Since $v \varphi^2$ is supported in $2B$, there is no issue of convergence for $x \in (4B)^c$ and we get
\begin{align*}
 |(-\Delta)^{(1-\alpha)/2}(v\varphi^2)(x)| \leq c_{n,\alpha} \int_{2B} \frac{|v(y)|}{|x-y|^{n+1-\alpha}} \, dy.
\end{align*}
Splitting the integral in $x \in (4B)^c$ into dyadic annuli, we therefore obtain
\begin{align*}
 |I_{22}| 
 &\lesssim \sum_{k=2}^\infty  2^{-k(n+1-\alpha)}  \bigg(\int_{2B} |v(y)| \, dy \bigg)\bigg(\int_{2^k B} |f(x)| \, dx \bigg) \\
 &\leq  \int_{2B} |v|^2 + \bigg(\sum_{k=2}^\infty  2^{-k(n+1-\alpha)} \int_{2^k B} |f| \bigg)^2,
\end{align*}
where the second step follows from the elementary inequality $XY \leq X^2 + Y^2$ and H\"older's inequality on $X = \int_{2B} |v|$. A final application of Sobolev-Poincar\'e inequality to $v$ completes the proof of \eqref{eq:RH_RieszGradient_Goal}.
\end{proof}

\subsection{Reverse H\"older estimates incorporating the fractional Laplacian}

As our next step, we look for similar reverse H\"older estimates that incorporate $(-\Delta)^{\alpha/2}u$. Instead of looking separately at the fractional Laplacian, we shall incorporate this term into the estimate for the Riesz fractional gradient and obtain a reverse H\"older estimate for the sum $|D^\alpha u| + |(-\Delta)^{\alpha/2}u|$. 

\begin{proposition}
\label{prop:RH_Fractional_Laplacian}.
Let $u \in H^{\alpha,2}$ be a weak solution to \eqref{eq:fractional_equation}, where $f \in L^{2_*}$ and $F \in L^2$. Let $\rho \in (2_{*,1},2)$. Then for all balls $B = B(x,r) \subset \RR^n$,
\begin{align*}
 \bigg(\fint_B (|D^\alpha u|+|(-\Delta)^{\alpha/2}u|)^2 \bigg)^{1/2}
&\lesssim \bigg(\fint_{4B} |D^\alpha u|^\rho \bigg)^{1/\rho} + \bigg(\int_{2B} |(-\Delta)^{\alpha/2} u| \bigg) \\
&\quad+ \bigg(\fint_{4B} |F|^2 \bigg)^{1/2} + \bigg(\sum_{k=0}^\infty 2^{-k} \fint_{2^kB} |D^\alpha u| \bigg) \\
&\quad+ r^\alpha \bigg(  \sum_{k=0}^\infty  2^{-k(1-\alpha)} \fint_{2^k B} |f|^{2_{*,\alpha}}\bigg)^{1/2_{*,\alpha}},
\end{align*}
with an implicit constant depending on $n, \alpha, \lambda, \rho$.
\end{proposition}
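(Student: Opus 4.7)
The plan is to combine Proposition~\ref{prop:RH_RieszGradient} with a tailored local/global decomposition of the representation $(-\Delta)^{\alpha/2}u = R^* D^\alpha u$, which follows from the Fourier-multiplier identities in \eqref{eq:relations_fractional_derivatives}. Since $(\fint_B |D^\alpha u|^2)^{1/2}$ is already under control by Proposition~\ref{prop:RH_RieszGradient}, the real task is to bound $(\fint_B |(-\Delta)^{\alpha/2}u|^2)^{1/2}$ in terms of the quantities on the right-hand side of the claim. Writing $g := D^\alpha u$ and $B = B(x_B,r)$, I would split
\[
(-\Delta)^{\alpha/2}u = R^*(g\mathbbm{1}_{2B}) + R^*(g\mathbbm{1}_{(2B)^c}) =: \mathcal L + \mathcal F.
\]

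For the local part $\mathcal L$, boundedness of $R^*$ on $L^2(\RR^n)$ (Theorem III.1 in \cite{Stein}) gives $\|\mathcal L\|_{L^2(\RR^n)} \lesssim \|g\|_{L^2(2B)}$, hence $(\fint_B |\mathcal L|^2)^{1/2} \lesssim (\fint_{2B}|D^\alpha u|^2)^{1/2}$. Then I would apply Proposition~\ref{prop:RH_RieszGradient} on $2B$ to replace this by $(\fint_{4B}|D^\alpha u|^\rho)^{1/\rho} + (\fint_{4B}|F|^2)^{1/2} + r^\alpha(\sum_k 2^{-k(1-\alpha)}\fint_{2^{k+1}B}|f|^{2_{*,\alpha}})^{1/2_{*,\alpha}}$, which are exactly the terms appearing on the right of the target inequality (up to a harmless reindexing of the tail in $f$).

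For the far part $\mathcal F$, the kernel $K(x,y) = c_n(x-y)/|x-y|^{n+1}$ of $R^*$ from \eqref{eq:RieszTransforms} is a standard Calder\'on--Zygmund kernel, so I would exploit its cancellation via the classical trick
\[
\mathcal F(x) \;=\; \fint_{2B} \big[\mathcal F(x) - \mathcal F(x_0)\big]\,dx_0 \;+\; \fint_{2B}\mathcal F(x_0)\,dx_0 \quad (x\in B).
\]
For the oscillation term, the smoothness estimate $|K(x,y)-K(x_0,y)|\lesssim |x-x_0|/|x_B-y|^{n+1}$ valid for $|x_B-y|\gg r$, combined with a dyadic decomposition of $(2B)^c$ into annuli $2^{k+1}B\setminus 2^k B$, yields the pointwise bound (uniformly in $x\in B$)
\[
\Big|\mathcal F(x)-\fint_{2B}\mathcal F\Big| \;\lesssim\; \sum_{k\ge 1} 2^{-k}\fint_{2^{k+1}B}|D^\alpha u|,
\]
which matches the $2^{-k}$-tail in the statement. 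For the remaining constant piece I write $\mathcal F = (-\Delta)^{\alpha/2}u - \mathcal L$ and bound
\[
\Big|\fint_{2B}\mathcal F\Big| \;\le\; \fint_{2B}|(-\Delta)^{\alpha/2}u| + \fint_{2B}|\mathcal L|,
\]
where the second summand is controlled by $(\fint_{2B}|\mathcal L|^\rho)^{1/\rho}\lesssim (\fint_{2B}|D^\alpha u|^\rho)^{1/\rho}$ using $L^\rho$-boundedness of $R^*$ and Jensen's inequality (which is available since $\rho>1$ as $2_{*,1}\ge 1$ for $n\ge 2$).

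Adding the bounds for $\mathcal L$ and $\mathcal F$ yields the desired estimate for $(\fint_B|(-\Delta)^{\alpha/2}u|^2)^{1/2}$, and combining with Proposition~\ref{prop:RH_RieszGradient} on $B$ for the $D^\alpha u$ contribution gives the claim. The only delicate point is the use of cancellation to obtain the $2^{-k}$ (rather than $2^0$) decay in the tail of $|D^\alpha u|$; without it, the naive pointwise estimate from $|K(x,y)|\lesssim |x-y|^{-n}$ produces a diverging geometric series of averages, which is exactly why the singular-integral structure of $R^*$, rather than just its size, must be invoked.
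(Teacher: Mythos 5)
Your strategy coincides with the paper's in all essentials: the paper isolates exactly the content of your local/far decomposition of $(-\Delta)^{\alpha/2}u=\pm R^*D^\alpha u$ as a separate statement, Lemma~\ref{lem:localRiesz}, and then derives Proposition~\ref{prop:RH_Fractional_Laplacian} by combining it with Proposition~\ref{prop:RH_RieszGradient} on $2B$, just as you do. The one genuine divergence is in how the cancellation in the far piece $\mathcal F=R^*(\mathbbm{1}_{(2B)^c}D^\alpha u)$ is exploited. The paper fixes a single comparison point $z\in B$, writes $R^*(\mathbbm{1}_{(2B)^c}w)(x)$ as a kernel-difference integral plus the principal-value integrals $R^*w(z)$ and $R^*w_0(z)$, and then chooses $z$ generically via Tchebychev's inequality and the weak-$(1,1)$ bound for $R^*$. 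You instead average the compensating point $x_0$ over a ball and control the constant term $\fint\mathcal F$ by the triangle inequality $|\mathcal F|\leq|(-\Delta)^{\alpha/2}u|+|\mathcal L|$ together with Jensen and the $L^\rho$-boundedness of $R^*$. Both are legitimate Calder\'on--Zygmund arguments; yours avoids invoking the weak-$(1,1)$ bound of $R^*$, at the price of bringing in $L^\rho$-boundedness and a $\fint|(-\Delta)^{\alpha/2}u|$ term (which the paper also has in the lemma).

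There is, however, a small but real gap in your oscillation estimate. The smoothness bound $|K(x,y)-K(x_0,y)|\lesssim|x-x_0|/|x_B-y|^{n+1}$ requires that $|x_B-y|$ dominate $\sup_{\xi\in[x,x_0]}|\xi-x_B|$ by a fixed factor, so that the segment $[x,x_0]$ stays bounded away from $y$. You average over $x_0\in 2B$ while $y$ ranges over all of $(2B)^c$; on the innermost annulus $4B\setminus 2B$ the segment can pass arbitrarily close to $y$, the estimate fails, and in fact $\fint_{2B}|K(x_0,y)|\,dx_0$ diverges for $y$ near $\partial(2B)$, so one cannot proceed naively there. The fix is routine: average over $x_0\in B$ rather than $x_0\in 2B$. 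Then for $x,x_0\in B$ and $y\in(2B)^c$ one has $|x_B-y|\geq 2r\geq 2|\xi-x_B|$ along $[x,x_0]$, so $|\xi-y|\geq|x_B-y|/2$ and the smoothness estimate holds uniformly, your dyadic decomposition produces the $\sum_{k\ge1}2^{-k}\fint_{2^{k+1}B}|D^\alpha u|$ tail, and the constant piece becomes $\fint_B\mathcal F$, which your triangle-inequality argument handles verbatim. (Alternatively, split at $4B$ rather than $2B$ and absorb the extra annulus into the local part.) With this adjustment your proof is correct and essentially equivalent to the paper's.
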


This will follow at once from the preceding proposition and the following real variable lemma that has nothing to do with solutions to \eqref{eq:fractional_equation}. It does, however, illustrate how tails naturally enter the scene when changing the quantity to be controlled via a Riesz transform and we suggest that this phenomenon is natural also for more general Calder\'{o}n--Zygmund operators.

\begin{lemma}
\label{lem:localRiesz}
Let $\alpha \in (0,1)$ and $u \in H^{\alpha,2}$. There is a constant $C=C(n,\alpha)$ such that for any ball $B \subset \RR^n$,
\begin{align*}
 \bigg(\fint_{B} |(-\Delta)^{\alpha/2}u|^2\bigg)^{1/2} \leq C\bigg( \int_{2B} |D^\alpha u|^2 + \int_{B} |(-\Delta)^{\alpha/2}u| + \sum_{k=1}^\infty 2^{-k} \fint_{2^k B} |D^\alpha u| \bigg).
\end{align*}
\end{lemma}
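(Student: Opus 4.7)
The plan is to exploit the identity $(-\Delta)^{\alpha/2}u = c_{n,\alpha} R^* D^\alpha u$, which one reads off by comparing the Fourier symbols $|\xi|^\alpha$ and $(i \xi^\top /|\xi|) \cdot (\xi/|\xi|^{1-\alpha})$, see \eqref{eq:relations_fractional_derivatives}. In view of the integral representation \eqref{eq:RieszTransforms} of $R^*$, the right-hand side is a Calder\'on--Zygmund operator with a kernel that is smooth off the diagonal, so the standard strategy of splitting input into near and far parts will apply. Decompose
\begin{equation*}
(-\Delta)^{\alpha/2}u = c_{n,\alpha}\, R^*\!\big(\mathbbm{1}_{2B}\, D^\alpha u\big) + c_{n,\alpha}\, R^*\!\big(\mathbbm{1}_{(2B)^c}\, D^\alpha u\big) =: L(x) + T(x).
\end{equation*}

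Next I would handle $L$ and $T$ separately on $B$. For the local piece $L$, the $L^2$-boundedness of the Riesz transform yields
$\|L\|_{L^2(B)} \leq \|L\|_{L^2(\RR^n)} \lesssim \|D^\alpha u\|_{L^2(2B)}$,
which accounts for the first term on the right-hand side once we divide by $|B|^{1/2}$. For $T$, since $D^\alpha u \in L^2(\RR^n)$, Cauchy--Schwarz ensures that the defining integral of $R^*(\mathbbm{1}_{(2B)^c} D^\alpha u)$ converges absolutely at every $x\in B$ (the kernel decays like $|x_B - y|^{-n}$ there), so $T$ is defined pointwise on $B$ by an honest integral.

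The key step is to show that $T$ is essentially constant on $B$, with oscillation controlled by the tail sum. For any $x, x' \in B$ and $y$ in a dyadic annulus $2^{k+1}B \setminus 2^k B$ with $k \geq 1$, the Lipschitz estimate of the kernel $K(z) = z/|z|^{n+1}$ gives
\begin{equation*}
\bigl|K(x-y) - K(x'-y)\bigr| \lesssim \frac{|x-x'|}{|x_B-y|^{n+1}} \lesssim \frac{r}{(2^k r)^{n+1}},
\end{equation*}
where $r$ is the radius of $B$. Summing over $k$, a direct computation yields
\begin{equation*}
|T(x) - T(x')| \lesssim \sum_{k=1}^\infty 2^{-k} \fint_{2^{k+1}B} |D^\alpha u|.
\end{equation*}
Averaging $x' \in B$, the same bound holds for $|T(x) - (T)_B|$ with $(T)_B := \fint_B T$.

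Finally I would combine the pieces. Writing $(-\Delta)^{\alpha/2}u = (L + T - ((-\Delta)^{\alpha/2}u)_B) + ((-\Delta)^{\alpha/2}u)_B$, the Minkowski inequality gives
\begin{equation*}
\Bigl(\fint_B |(-\Delta)^{\alpha/2}u|^2\Bigr)^{1/2} \leq \Bigl(\fint_B |L - (L)_B|^2\Bigr)^{1/2} + \Bigl(\fint_B |T - (T)_B|^2\Bigr)^{1/2} + \bigl|((-\Delta)^{\alpha/2}u)_B\bigr|.
\end{equation*}
The first summand is bounded by the $L^2$ estimate on $L$ after dividing by $|B|^{1/2}$; the second by the oscillation bound above; and the third trivially by $\fint_B |(-\Delta)^{\alpha/2}u|$. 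This yields the claimed inequality, where the $\fint_B |(-\Delta)^{\alpha/2}u|$ term absorbs the unknown mean value of the far part that we could not compute directly. The main (minor) obstacle is the last absorption: since the tail integral defining $(T)_B$ has no obvious pointwise bound, it is essential to not estimate it directly but rather to reintroduce the full function $(-\Delta)^{\alpha/2}u$ and take advantage of its $L^1$ average on $B$ appearing on the right-hand side.
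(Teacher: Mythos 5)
Your proof is correct, and it handles the far piece by a genuinely different (and arguably cleaner) device than the paper. Both proofs reduce, via $(-\Delta)^{\alpha/2}u = c\, R^* D^\alpha u$ (equivalently $R^*R = cI$ combined with $R(-\Delta)^{\alpha/2}u = c D^\alpha u$), to controlling $R^*(\mathbbm{1}_{(2B)^c}D^\alpha u)$ on $B$, and both use the $L^2$ bound for $R^*$ on the near piece and the kernel's H\"older regularity on the far piece. The paper then produces a pointwise bound for $R^*(\mathbbm{1}_{(2B)^c}w)(x)$ by telescoping against $R^*(\mathbbm{1}_{(2B)^c}w)(z)$ for a cleverly chosen $z\in B$: one separates $R^*(\mathbbm{1}_{(2B)^c}w)(z) = R^*w(z) - R^*(\mathbbm{1}_{2B}w)(z)$ and uses Tchebychev plus the weak-$(1,1)$ bound for $R^*$ to select a $z$ at which both $|R^*w(z)|\lesssim \fint_B|R^*w|$ and $|R^*(\mathbbm{1}_{2B}w)(z)| \lesssim |B|^{-1}\int_{2B}|w|$ hold simultaneously. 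Your argument instead observes that the far piece $T$ has small oscillation on $B$ (same kernel regularity estimate) and then absorbs its unknown mean $(T)_B$ by grouping $(-\Delta)^{\alpha/2}u = (L-(L)_B) + (T-(T)_B) + ((-\Delta)^{\alpha/2}u)_B$, so the constant term is simply $\leq \fint_B|(-\Delta)^{\alpha/2}u|$ and no weak-$(1,1)$ bound is needed. This sidesteps the telescoping against a good reference point entirely; the cost is nil, and the resulting bound is the same up to the harmless difference between $(\int_{2B}|D^\alpha u|^2)^{1/2}$ and $\int_{2B}|D^\alpha u|^2$, a normalization issue that already appears in the paper's own proof (both arguments produce the square root after normalizing $B$ to unit radius).
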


\begin{proof}
By a scaling and translation argument it suffices again to argue for $B=B(0,1)$. Since $R^*R f= -f$ for every $f \in L^2$, we can split the square of the left-hand side as
\begin{align*}
 \int_B |(-\Delta)^{\alpha/2}u|^2
 &\leq 2\int_B |R^*(\mathbbm{1}_{2B}R(-\Delta)^{\alpha/2}u)|^2  + 2\int_B |R^*(\mathbbm{1}_{(2B)^c}R(-\Delta)^{\alpha/2}u)|^2.
\end{align*}
Using the relation $R (-\Delta)^{\alpha/2}u = D^\alpha u$ from \eqref{eq:relations_fractional_derivatives} as well as the $L^2$ boundedness of $R^*$ on the first integral, we find
\begin{align}
\label{eq:localRiesz1}
\int_B |(-\Delta)^{\alpha/2}u|^2 \lesssim \int_{2B} |D^\alpha u|^2 + \int_{B} |R^*(\mathbbm{1}_{(2B)^c}D^\alpha u)|^2
\end{align}
and it remains to control the right-most integral. In fact, we shall establish a pointwise bound for the integrand, which will yield the conclusion since $B$ is normalized. 

For brevity set $w:=D^\alpha u \in L^2$. Given $x \in B$, there is no issue of convergence with the integral representation \eqref{eq:RieszTransforms} for $R^*(\mathbbm{1}_{(2B)^c}w)(x)$ and we have
\begin{align*}
 R^*(\mathbbm{1}_{(2B)^c}w)(x) = c_n \int_{|y|>2} \frac{(x-y) \cdot w(y)}{|x-y|^{n+1}} \, dy.
\end{align*}
We let now $z \in B$ and $\eps \in (0,1)$ be free parameters to be specified later on, and introduce $w_0 := \mathbbm{1}_{2B} w \in L^1 \cap L^2$. Since $B(z,\eps) \subset 2B$, we have
\begin{align*}
 c_n^{-1} R^*(\mathbbm{1}_{(2B)^c}w)(x)
&= \int_{|y|>2} \Big( \frac{x-y}{|x-y|^{n+1}} - \frac{z-y}{|z-y|^{n+1}} \Big) \cdot w(y) \, dy \\
&\quad+ \int_{|z-y|> \eps} \frac{(z-y) \cdot w(y)}{|z-y|^{n+1}} \, dy 
-\int_{|z-y|> \eps} \frac{(z-y) \cdot w_0(y)}{|z-y|^{n+1}} \, dy. 
\end{align*}
The mean value theorem allows us to control the size of the kernel in the first integral on the right by $|y|^{-n-1}$. So, letting $\eps \to 0$ we obtain from \eqref{eq:RieszTransforms} for almost every $z \in B$ the bound
\begin{align*}
 |R^*(\mathbbm{1}_{(2B)^c}w)(x)| \lesssim \int_{|y|>2} \frac{|w(y)|}{|y|^{n+1}} \, dy + |R^*w(z)| + |R^*w_0(z)|,
\end{align*}
with an implicit constant depending on $\alpha$ and $n$. It remains to pick $z \in B$ correctly. Tchebychev's inequality entails
\begin{align*}
 \bigg| \bigg\{ z \in B: |R^*w(z)| \geq 4 \fint_B |R^*w| \bigg\} \bigg| \leq \frac{|B|}{4}
\end{align*}
and likewise the weak-$(1,1)$ bound for $R^*$ (with constant $C=C(n)$ say, see Theorem~II.4 in \cite{Stein}) guarantees
\begin{align*}
 \bigg| \bigg\{ z \in B: |R^*w_0(z)| \geq \frac{4 C}{|B|} \int_{\RR^n} |w_0| \bigg\} \bigg| \leq \frac{|B|}{4}.
\end{align*}
Hence, the set of $z \in B$ violating both conditions simultaneously has measure at least $|B|/2$ and we pick any such $z$. In conclusion, we have obtained for almost every $x \in B$ the pointwise bound
\begin{align*}
 |R^*(\mathbbm{1}_{(2B)^c}w)(x)| \lesssim \int_{|y|>2} \frac{|w(y)|}{|y|^{n+1}} \, dy + \int_{B} |R^*w| + \int_{\RR^n}|w_0|.
\end{align*}
At this stage we recall $w_0 = \mathbbm{1}_{2B} w$, $w = D^\alpha u$,  and thus we obtain from \eqref{eq:relations_fractional_derivatives} that $R^*w = R^* R (-\Delta)^{\alpha/2} u = - (-\Delta)^{\alpha/2} u$. Splitting the integral in $|y|>2$ into dyadic annuli we eventually find
\begin{align*}
 |R^*(\mathbbm{1}_{(2B)^c}w)(x)| \lesssim \int_B|(-\Delta)^{\alpha/2} u| + \sum_{k=1}^\infty 2^{-k} \fint_{2^k B} |D^\alpha u|. 
\end{align*}
Integrating both sides in $x \in B$ completes the ongoing estimate of the right-most integral in \eqref{eq:localRiesz1}.
\end{proof}

Now, we easily obtain a 

\begin{proof}[Proof of Proposition~\ref{prop:RH_Fractional_Laplacian}]
Proposition~\ref{prop:RH_RieszGradient} already controls the $L^2$ average of $D^\alpha u$ on $B$ by the desired right-hand side. In order to control the $L^2$ average of $(-\Delta)^{\alpha/2}u$, we first apply Lemma~\ref{lem:localRiesz} and then use Proposition~\ref{prop:RH_RieszGradient} again on the ball $2B$.
\end{proof}

\subsection{Proof of Theorem~\ref{thm:fractional}}

With Proposition~\ref{prop:RH_Fractional_Laplacian} at hand, we can apply our (non-local) Gehring lemmas to obtain improvement of the integrability of $|D^\alpha u| + |(-\Delta)^{\alpha} u|$. 

We begin with statement (1), where the right-hand side of the equations exhibits higher local integrability. Let us take any $\rho \in (1,2)$ to which Proposition~\ref{prop:RH_Fractional_Laplacian} applies, the precise value of which does neither play a role for the argument, nor the conclusion. We introduce the tail functional
\begin{align*}
 a_h(B) := \sum_{k=0}^\infty 2^{-k(1-\alpha)} \fint_{2^k B} h \, dx,
\end{align*}
for $h \geq 0$ locally integrable and $B \subset \RR^n$ a ball.

We put $v:= |(- \Delta)^{\alpha} u| + |(-\Delta)^{\alpha/2}u|$ so that Proposition~\ref{prop:RH_Fractional_Laplacian} entails for every ball $B = B(x,R)$,
\begin{align*}
 \bigg(\fint_B v^2 \bigg)^{1/2}
\lesssim (a_{v^{\,\rho}}(B))^{1/\rho} + (a_{|F|^2}(B))^{1/2} + R^\alpha (a_{|f|^{2_*}}(B))^{1/2_*},
\end{align*}
with an implicit constant depending on $n, \alpha, \lambda, \rho$. Note that here we have been very generous by using H\"older's inequality to unite all quantities containing either $(- \Delta)^{\alpha} u$ or $D^\alpha u$ in one single tail for $v$ with $L^{\, \rho}$-averages and reducing the decay of the geometric series meeting those averages. In order to bring this estimate in the form of Theorem~\ref{GeneralGehering.cor}, we introduce $\widetilde{v} := v^{\, \rho}$ and similarly $\widetilde{F}: = |F|^{\, \rho}$ and $\widetilde{f}:= |f|^{\, \rho}$. In terms of $\widetilde{v}, \widetilde{F}, \widetilde{f}$ the previous bound reads
\begin{align*}
 \bigg(\fint_B {\widetilde{v}}^{\, 2/\rho} \bigg)^{\rho/2}
\lesssim a_{\,\widetilde{v}}(B) + (a_{{\widetilde{F}}^{\, 2/\rho}}(B))^{\, \rho/2} + R^{\alpha \rho} (a_{{\widetilde{f}}^{\, 2_*/\rho}}(B))^{\, \rho/2_*}.
\end{align*}
Now that the exponent of $\widetilde{v}$ on the right-hand side is $1$, the claim follows from Theorem~\ref{GeneralGehering.cor} after checking the numerology. The parameters in that theorem are 
\begin{align*}
 (D,\beta,q,s) := \big(n,\alpha \rho,\tfrac{2}{\rho}, \tfrac{2_{*}}{\rho} \big)
\end{align*}
and so the conditions $0<s<q$, $q>1$ and $\beta \geq D(1/s - 1/q)$ are satisfied. (Note that in fact $D(1/s - 1/q) = \alpha \rho = \beta$ and that $s<1$). Hence, Theorem~\ref{GeneralGehering.cor} gives us local higher integrability for $\widetilde{v}$ with exponent larger than $q$ through the quantitative bound \eqref{GGeq2cor.eq}, provided $\widetilde{F}$ and $\widetilde{f}$ are globally integrable with exponents $q$ and $s$ and locally integrable to some higher exponents, respectively. By definition, this precisely means $F \in L^{2} \cap L_{loc}^{p}$ and $f \in L^{2_{\ast}} \cap L_{loc}^{p_{\ast}} $ and for some $p>2$, which is our assumption. 

Of course we can write the resulting estimate again in terms of the original functions: With $v :=  |D^\alpha u| + |(-\Delta)^{\alpha/2}|$ we get for all sufficiently small $\epsilon_0 = \epsilon_0(\lambda, n, \alpha, p)>0$ on all balls $B = B(x,R)$,
\begin{align}
\label{eq:RHbound_solution}
\begin{split}
\left( \fint_{B}  v^{2+\epsilon_0}\right)^{\tfrac{1}{2+\epsilon_0}} 
&\lesssim  \sum_{k=0}^{\infty} 2^{-k(1-\alpha) } \fint_{2^{k} B} v^{2} 
+ \left( \sum_{k=1}^{\infty} 2^{-k (1-\alpha)} \fint_{2^{k} B } |F|^{2} \right)^{1/2} \\
&\quad + R^{\alpha} \left(\sum_{k = 0}^\infty 2^{-k(1-\alpha)} \fint_{2^k B} |f|^{2_*} \right)^{1/2_{*}} \\
&\quad + \left( \fint_{2B } |F|^p \right)^{\tfrac{1}{p} }
+r^{\alpha}\left( \fint_{2 B } |f|^{p_*} \right)^{\tfrac{1}{p_*}},
\end{split}
\end{align}
where the right-hand side is finite due to our assumptions $F \in L^{2} \cap L_{loc}^{p}$ and $f \in L^{2_{\ast}} \cap L_{loc}^{p_{\ast}} $.

The global integrability stated in part (2) of the theorem follows by replacing Theorem \ref{GeneralGehering.cor} with Theorem \ref{thm:global1} in the proof above. \hfill $\square$

\end{document}